\newcommand{\be}{\begin{equation}}
\newcommand{\ee}{\end{equation}}
\newcommand{\sech}{{\rm \,sech}}
\newcommand{\R}{{\mathbb R}}
\newcommand{\Lum}{{\mathcal{L}_{-}}}
\newcommand{\ve}{{\varepsilon}}
\newcommand{\Hh}{\mathbb{H}}
\newcommand{\Ker}{{\rm \,Ker}}
\numberwithin{equation}{section}
\numberwithin{figure}{section}
\newtheorem{theorem}{Theorem}[section]
\newtheorem{proposition}[theorem]{Proposition}
\newtheorem{remark}[theorem]{Remark}
\newtheorem{lemma}[theorem]{Lemma}
\newtheorem{corollary}[theorem]{Corollary}
\newtheorem{definition}[theorem]{Definition}
\begin{document}
\vglue-1cm \hskip1cm
\title[orbital stability of solitary waves]{On the orbital stability of solitary waves for the fourth order nonlinear Schr\"odinger equation}

\begin{center}

\subjclass[2000]{35Q51, 35Q55, 35Q70.}

\keywords{existence of solitary waves, orbital stability, orbital instability, fourth order nonlinear Schrödinger equation.}

\maketitle
%
%
\author{\textbf{Handan Borluk}\footnote{\texttt{handan.borluk@ozyegin.edu.tr}} \\  
	{\small Department of Mathematical Engineering, Ozyegin University},
	{\small  Cekmekoy, Istanbul,  T\" urkiye} \\
	\textbf{Gulcin M. Muslu}\footnote{\texttt{gulcin@itu.edu.tr}} \\ 
	{\small  
		Istanbul Technical University, Department of Mathematics, Maslak 34469, Istanbul,  T\" urkiye}\\   
	\textbf{F\'abio Natali}\footnote{\texttt{fmanatali@uem.br}} \\ 
	{\small Department of Mathematics, State University of Maring\'a, Maring\'a, PR, Brazil}\\
}



\vspace{3mm}

\end{center}

\begin{abstract}
In this paper, we present new results regarding the orbital stability of solitary standing waves for the general fourth-order Schr\"odinger equation with mixed dispersion. The existence of solitary waves can be determined both as minimizers of a constrained complex functional and by using a numerical approach. In addition, for specific values of the frequency associated with the standing wave, one obtains explicit solutions with a hyperbolic secant profile. Despite these explicit solutions being minimizers of the constrained functional, they cannot be seen as a smooth curve of solitary waves, and this fact prevents their determination of stability using classical approaches in the current literature. To overcome this difficulty, we employ a numerical approach to construct a smooth curve of solitary waves. The existence of a smooth curve is useful for showing the existence of a threshold power $\alpha_0\approx 4.8$ of the nonlinear term  such that if $\alpha\in (0,\alpha_0),$ the explicit solitary wave is stable, and if $\alpha>\alpha_0$, the wave is unstable. An important feature of our work, caused by the presence of the mixed dispersion term, concerns the fact that the threshold value $\alpha_0 \approx 4.8$ is not the same as that established for proving the existence of global solutions in the energy space, as is well known for the classical nonlinear Schr\"odinger equation.

\end{abstract}

\section{Introduction} 
Consider the fourth order nonlinear Schr\"odinger equation (4NLS)
\begin{equation}\label{NLS-equation}
	i u_t + u_{xx}-u_{xxxx} + |u|^\alpha u = 0,
\end{equation}
where $u:\mathbb{R}\times \mathbb{R}\rightarrow\mathbb{C}$ and $\alpha>0$. Since $u$ is a complex function, we sometimes write $u=u_1+iu_2\equiv (u_1,u_2)$ in order to simplify the notation. The physical motivation concerning particular cases of the equation $(\ref{NLS-equation})$ was introduced in \cite{kar} and \cite{kar1}. In fact, when $\alpha=2$, it is related to the propagation of intense laser beams in a bulk medium with Kerr nonlinearity when small fourth-order dispersion is taken into account. Moreover, equation $(\ref{NLS-equation})$ has been considered in connection with nonlinear fiber optics and the theory of optical solitons in gyrotropic media. An interesting fact arises when we examine equation $(\ref{NLS-equation})$ from a mathematical standpoint, as it does not enjoy scaling invariance due to the presence of the mixed dispersion term. This property is very important for the standard nonlinear Schr\"odinger equation (NLS),
\begin{equation}\label{2NLS-equation}
	i u_t + u_{xx}+ |u|^\alpha u = 0.
\end{equation}
\indent Let $\lambda>0$ be fixed. The scaling invariance $u\mapsto \lambda^{\frac{2}{\alpha}}u(\lambda x,\lambda^2t)$ applied to the equation $(\ref{2NLS-equation})$ is useful, for instance, to know the connection between the existence of global solutions in $\mathbb{H}^1=H^1(\mathbb{R})\times H^1(\mathbb{R})$ (see \cite[Chapter 6]{linares-ponce}) and the orbital stability of solitary waves with explicit hyperbolic secant profile. In fact, if $0<\alpha<4$, the solution $u$ of the Cauchy problem associated to $(\ref{2NLS-equation})$ is global in time in $\mathbb{H}^1$ by using the Gagliardo-Nirenberg inequality and the associated solitary wave is orbitally stable in the same Sobolev space (see \cite{gss1}, \cite{gss2} and \cite{we}). If $\alpha>4$, the blow-up phenomena in finite time occurs in a convenient set of conditions of the initial data (see \cite[Chapter 6]{linares-ponce}). In addition, when $\alpha>4$ the solitary wave with hyperbolic secant profile is orbitally unstable (see \cite{gss1} and \cite{gss2}). As we will see later on, the model in $(\ref{NLS-equation})$ does not have the same connection, in the sense that we don't have a common threshold value as $\alpha=4$ works for the equation $(\ref{2NLS-equation})$.\\
\indent Equation $(\ref{NLS-equation})$ has the following conserved quantities
\begin{equation}\label{E}
    E(u) = \frac{1}{2} \int_{\mathbb{R}} |u_{xx}|^2+|u_x|^2 - \frac{2}{\alpha + 2} |u|^{\alpha+2} dx,
\end{equation}
and,
\begin{equation}\label{F}
F(u) = \frac{1}{2} \int_{\mathbb{R}} |u|^2 dx.
\end{equation}
In equation \eqref{NLS-equation}, we can consider solitary standing wave solutions of the form $u(x,t) = e^{i\omega t} \varphi(x)$, where $\omega>0$. Substituting this form into \eqref{NLS-equation}, we obtain
\begin{equation}\label{edo}
 \varphi''''-\varphi'' + \omega \varphi - |\varphi|^\alpha \varphi = 0,
\end{equation}
where $\varphi$ is a real function. Concerning equation $(\ref{2NLS-equation})$, we obtain that the unique positive even solution for the equation
\begin{equation}\label{2edo}
 -\varphi'' + \omega \varphi - |\varphi|^\alpha \varphi = 0,
\end{equation}
has the hyperbolic secant profile (as we have already said in the second paragraph) given by

\begin{equation}\label{sechNLS}
\varphi(x)=\left(\omega\frac{\alpha+2}{2}\right)^{\frac{1}{\alpha}}\sech^{\frac{2}{\alpha}}\left(\frac{\alpha\sqrt{\omega}x}{2}\right).
\end{equation}
An important fact from $(\ref{sechNLS})$ is that we obtain an explicit curve $\omega\in(0,+\infty)\mapsto\varphi=\varphi_{\omega}$ of solitary waves depending on $\omega>0$ for each fixed $\alpha>0$. This property is very useful to obtain the stability scenario as determined in \cite{gss1} and \cite{gss2}, mentioned in the second paragraph of this introduction. According to the abstract theories in \cite{gss1} and \cite{gss2}, one of the cornerstone to determine the stability is to analyze the sign of the quantity $d''(\omega)$, where $d(\omega)=E(\varphi,0)+\omega F(\varphi,0)$. In the case of the equation $(\ref{2NLS-equation}),$ we obtain
\begin{equation}\label{d2NLS}
d''(\omega)=\frac{1}{2\omega}\left(\frac{4-\alpha}{2\alpha}\right)\int_{\mathbb{R}}\varphi^2(x)dx.
\end{equation}
Thus, if $0<\alpha<4$, we obtain $d''(\omega)>0$ (orbital stability) and if $\alpha>4$, we obtain \mbox{$d''(\omega)<0$} (orbital instability). When $d''(\omega)=0$ the stability/instability results in \cite{gss1} and \cite{gss2} are inconclusive and an orbital instability can be determined by proving the strong instability (blow-up) as in \cite{cazenave} and \cite{weinstein}. It is important to mention that the arguments in \cite{we} can also be applied only in the case when $d''(\omega)>0$, that is, when the solitary wave is orbitally stable.\\
\indent As far as we know, our contribution has the intention of demonstrating the orbital stability and instability results for specific solutions of $(\ref{edo})$ that have a similar form as in $(\ref{sechNLS})$. Moreover, our results also indicate that the power exponent $\alpha$ does not have a similar connection as we found for equation $(\ref{2NLS-equation})$ and the reason for that is the presence of the mixed dispersion term 
$\partial_x^4-\partial_x^2$. To begin with, we need to show the existence of solitary wave solutions $\varphi$ for the equation \eqref{edo}. First, we find solitary wave solutions by solving the following constrained minimization problem
\begin{equation}\label{minP1}
   \Gamma_{\omega,\tau}= \inf \left\{ \mathcal{B}_\omega(u) := \frac{1}{2} \int_{\mathbb{R}} |u_{xx}|^2+|u_x|^2 + \omega |u|^2 dx;\ u \in \mathbb{H}^2 \text{ and } \int_{\mathbb{R}} |u|^{\alpha+2} dx = \tau \right\},
\end{equation}
where $\tau > 0$ is a fixed number. An application of the concentration compactness method as used in \cite{denis2}, \cite{denis}, and \cite{levandosky}  guarantees the existence of a complex function $\Phi$ such that the minimum of the problem $(\ref{minP1})$ is attained at $\Phi$. Moreover, it follows from $(\ref{edo})$ that $\Phi\in \mathbb{H}^4$ an $\Phi^{(n)}(x)\rightarrow 0$ as $|x|\rightarrow \infty$ for all $n=0,1,2,3$. Assume for a moment that for all $\alpha>0$, function $\Phi$ can be considered as a real even function. We can write $\Phi=\varphi+i0=(\varphi,0)$ or, more generally, as $\Phi=e^{i\theta_0}\varphi$ for some $\theta_0\in\mathbb{R}$. In any case, we obtain $\mathcal{B}_{\omega}(\Phi)=\mathcal{B}_{\omega}(\varphi,0)=\Gamma_{\omega,\tau}$. On the other hand, let us consider $G_{\omega}(u)=E(u)+\omega F(u)$. The fact that $(\varphi,0)$ is a minimizer of $(\ref{minP1})$ enables to consider the linearized operator at the pair $(\varphi,0)$, given by
\begin{equation}\label{operator}
    \mathcal{L}= G_{\omega}''(\varphi,0):=\left( \begin{array}{cc} \mathcal{L}_{-} & 0 \\ 0 & \mathcal{L}_{+} \end{array} \right),
\end{equation}
where $G_{\omega}''(\varphi,0)$ represents the second derivative in the sense of Fr\'echet at the point $(\varphi,0)$ and $\mathcal{L}_{-}$ and $\mathcal{L}_{+}$ are given, respectively by
\begin{equation}\label{L1L2}
\mathcal{L}_{-}=\partial_x^4-\partial_x^2+\omega-(\alpha+1)\varphi^{\alpha}\ \ \ \mbox{and}\ \ \ \ \mathcal{L}_{+}=\partial_x^4-\partial_x^2+\omega-\varphi^{\alpha}.
\end{equation}
\indent According to the stability/instability theories in \cite{gss1} and \cite{gss2} (see also \cite{we}), we need to know the behaviour of the non-positive spectrum of $\mathcal{L}$. In addition, since equation $(\ref{NLS-equation})$ is invariant under translation and rotation, the dimension of $\Ker(\mathcal{L})$ should be at most $2-$dimensional. Thus, if ${\rm n}(\mathcal{L})=1$, ${\rm z}(\mathcal{L})=2$ and $d''(\omega)>0$, we obtain that the solitary wave $\varphi$ that solves equation $(\ref{edo})$ is stable. Now, if ${\rm n}(\mathcal{L})=1$, ${\rm z}(\mathcal{L})=2$ and $d''(\omega)<0$, we obtain the orbital instability. Notations ${\rm n}(\mathcal{L})$ and ${\rm z}(\mathcal{L})$ are the number of negative eigenvalues and the dimension of the kernel of the linear operator $\mathcal{L}$, respectively.\\
\indent On the other hand, explicit solutions for the equation $(\ref{edo})$ are well known but only for a fixed value of $\omega=\omega_0$ depending on $\alpha>0$. They are given by 
\begin{equation}\label{expsolode}
	\phi(x)=a_0\sech^{\frac{4}{\alpha}}(b_0x),
	\end{equation}
	where $a_0$ and $b_0$ are given, respectively, by
	\begin{equation}\label{ab}
		\displaystyle a_0=\left(\frac{3\alpha^3 + 22\alpha^2 + 48\alpha + 32}{2(\alpha^2 + 4\alpha + 8)^2}\right)^{\frac{1}{\alpha}},\ \ \ b_0=\frac{\alpha}{2\sqrt{\alpha^2+4\alpha+8}}.
		\end{equation}
	The frequency $\omega=\omega_0$ of the wave $\phi$ depends only on $\alpha$ and it is expressed as 
	\begin{equation}\label{omega}
		\omega_0=\frac{4(\alpha^2+4\alpha+4)}{\alpha^4+8\alpha^3+32\alpha^2+64\alpha+64}.
		\end{equation}
\indent The explicit solution $\phi$ in $(\ref{expsolode})$ is not a smooth curve depending on $\omega$ in the sense that, for each fixed $\alpha>0$, we have a single solitary wave with a hyperbolic secant profile. We also show in Proposition $\ref{sechmin}$ that $\phi$ solves the minimization problem $(\ref{minP1})$, and thus, we can write $\phi=\varphi$. Additionally, since $\phi = \varphi$ is a minimizer, we obtain that ${\rm n}(\mathcal{L}) = 1$ (see Proposition $\ref{propunique}$). Furthermore, we can conclude that ${\rm n}(\mathcal{L}) = 1$ and verify the crucial requirement for orbital stability, namely ${\rm z}(\mathcal{L}) = 2$, by applying the total positivity theory developed in \cite{al}, \cite{albo}, and \cite{ka} (see Section 3). An application of Theorem $\ref{mainal}$, which is the main result in \cite{al}, yields ${\rm n}(\mathcal{L}_{-})={\rm z}(\mathcal{L}_{-})=1$, and a simple argument of positiveness of the Fourier transform of $\varphi$ also gives ${\rm n}(\mathcal{L}_{+})=0$ and ${\rm z}(\mathcal{L}_{+})=1$. The fact that $\mathcal{L}$ in $(\ref{operator})$ has a diagonal form gives us ${\rm z}(\mathcal{L})={\rm z}(\mathcal{L}_{-})+{\rm z}(\mathcal{L}_{+})=2$. Since we do not know the exact behavior of $\varphi$ in a neighborhood of the point $\omega=\omega_0$ given in $(\ref{omega})$, we cannot calculate $d''(\omega)$ at $\omega=\omega_0$ to determine the sign of this quantity and conclude the orbital stability or instability using \cite{gss1}, \cite{gss2} and \cite{we}.\\
\indent To overcome this difficulty, we need to use a numerical approach to construct a convenient smooth curve of solitary waves depending on $\omega>0$. In fact, to generate the solitary wave solutions of \eqref{NLS-equation} we use  Pethviashvili's iteration method. 
The method is widely used for the generation of traveling wave solutions (\cite{BMN, aduran, LP, moraes, OBM, OBMN, PS}). Therefore, we obtain solutions for $\omega \neq \omega_0$ where the explicit solution is unknown.
It is known that there are non-positive solutions for $\omega > 0.25$,  which is an interesting feature of the solutions for the equation $(\ref{edo})$ since they do not appear for the classical NLS (\cite{PC, sprenger}). We obtain both positive and non-positive solutions numerically. Taking positive solutions for $\omega \leq 0.25$ into account, we calculate the value of $d''(\omega)$ numerically. Our results also point out that for $\alpha \in (0, 4.1)$, we have $d''(\omega)>0$. For $\alpha \in [4.1, 5.5)$, there is a threshold value $\omega_c$ such that if $\omega\in (0,\omega_c)$, we have $d''(\omega)<0$, and if $\omega>\omega_c$, we obtain $d''(\omega)>0$. At $\omega=\omega_c$, it is clear by the continuity that $d''(\omega_c)=0$, and we obtain a frequency $\omega$ that indicates a critical value for the stability. When $\alpha \geq 5.5 $, we obtain that $d''(\omega)<0$. Despite the fact that we can not determine the orbital stability when $\omega\neq\omega_0$, since we do not know the values of ${\rm n}(\mathcal{L})$ and ${\rm z}(\mathcal{L})$, we have a prediction for the orbital stability for different values of $\omega$.\\
\indent Summarizing the analysis above for the case $\omega=\omega_0,$ we have the following statements: let $\varphi$ be the solitary wave given by $(\ref{expsolode})$ and consider $\alpha_0\approx 4.8$. Then,
\begin{enumerate}\item[(C1)] If $\alpha\in (0,\alpha_0)$, the solitary wave $\varphi$ is orbitally stable.
\item[(C2)]If $\alpha>\alpha_0$, the solitary wave $\varphi$ is orbitally unstable. 
\end{enumerate}

\indent In \cite{FabioAdemir}, the authors prove the orbital stability for the case $\alpha=2$ using an adaptation of the arguments in \cite{we}. The lack of a smooth curve of solitary waves prevented the authors from calculating the value of $d''(\omega)$ at $\omega=\frac{4}{25}$, but they determined that ${\rm n}(\mathcal{L})=1$, ${\rm z}(\mathcal{L})=2$ and exhibited a value $\chi\in H^4$ such that $\mathcal{L}_{-}\chi=\varphi$ to calculate that the inner product $(\chi,\varphi)_{L^2}$ is negative. It is important to mention that if a smooth curve $\omega\in I\mapsto \varphi$ is available and we know its behavior in a small open interval, we obtain $\chi=-\frac{d\varphi}{d\omega}$. In addition, by deriving equation $(\ref{edo})$ with respect to $\omega$, we easily obtain $\mathcal{L}_{-}\left(-\frac{d\varphi}{d\omega}\right)=\varphi$, and thus
$(\chi,\varphi)_{L^2}=-\left(\frac{d\varphi}{d\omega},\varphi\right)_{L^2}=-\frac{1}{2}\frac{d}{d\omega}\int_{\mathbb{R}}\varphi^2(x)dx.$
Therefore, if $d''(\omega)>0$, we obtain $(\chi,\varphi)_{L^2}<0$, and all the arguments in \cite{gss1}, \cite{gss2} and \cite{we} work well to obtain the orbital stability. To prove that $(\chi,\varphi)_{L^2}<0$ at $\omega_0=\frac{4}{25}$, the authors of \cite{FabioAdemir} utilized additional facts from total positivity theory and the non-standard properties of Gegenbauer polynomials (see \cite{al}). However, despite the usefulness of calculating $(\chi,\varphi)_{L^2}$ to determine orbital stability when the inner product is negative and ${\rm n}(\mathcal{L})=1$ and ${\rm z}(\mathcal{L})=2$, we can conclude the orbital instability when $(\chi,\varphi)_{L^2}>0$, because since ${\rm z}(\mathcal{L}_{-})=1$, we have that $\chi=-\frac{d\varphi}{d\omega}$ is unique such that $\mathcal{L}_{-}\chi=\varphi$. \\
\indent The authors of \cite{denis2} studied the same equation in $(\ref{NLS-equation})$ in the $N-$dimensional case
\begin{equation}
iu_t-\gamma \Delta^2u+\beta\Delta u+|u|^{2\sigma}u=0,
\end{equation}
where $\gamma>0$, $\beta\in\mathbb{R}$ and $\sigma>0$. They considered two associated constrained minimization problems: one in which they minimize the energy with a constraint on the $L^2$-norm, and the other which is similar to problem $(\ref{minP1})$. By assuming an adequate set of conditions on the parameters $\gamma$, $\beta$, and $\sigma$, they determined the existence of minimizers and some additional properties, such as their sign, symmetry, and decay at infinity, as well as their uniqueness, nondegeneracy of the associated linearized operator, and orbital stability. Some properties obtained by the solutions in \cite{denis2} are similar to those in \cite{denis}, except for orbital stability, which was not addressed in the oldest contribution. The stability result in Theorem 1.4 of \cite{denis2},  states that if $0<\sigma<\frac{4}{N}$, then: (i) the set of minimizers of the energy with fixed mass is always stable, and
(ii) if the equivalent problem $(\ref{minP1})$ for the $N$-dimensional case has a minimizer $w$ that is nondegenerate, meaning ${\rm z}(\mathcal{L})=N+1$, and if $\mathcal{L}_{-}v=w$ for some $v\in H^4$ with $(v,w)_{L^2}<0$, then the minimizer $w$ is orbitally stable, but they did not exhibit  other values of $\sigma>0$ where the inner product $(v,w)_{L^2}$ is negative, except in the case $\sigma=1$ and $N=1$ when they mentioned the results in \cite{FabioAdemir}. Comparing it with our results in $\rm{(C1)}$-$\rm{(C2)}$, we only have $(v,w)_{L^2}<0$ when $\sigma\in (0,2.4)$ and $w=\varphi$ is given by $(\ref{expsolode})$. If $\sigma>2.4$, orbital instability occurs. Moreover, our numerical experiments in Section 4 also give that for $\omega\neq \omega_0$, we have $(v,w)_{L^2}>0$ always when $\sigma>2.75$. This means that our threshold value for stability and instability is far from the value $\frac{4}{N}$ mentioned in \cite{denis2} when $N=1$.

\indent Our paper is organized as follows: In Section 2, we present the existence of solitary wave solutions for the equation $(\ref{edo})$ by using analytical and numerical approaches. Section 3 is devoted to the spectral analysis of the operator $\mathcal{L}$ in $(\ref{operator})$. The orbital stability and instability of solitary waves is presented in Section 4.\\

\textbf{Notation.} For $s\geq0$, the Sobolev space
$H^s:=H^s(\mathbb{R})$
consists of all real distributions $f$ such that
$
\|f\|^2_{H^s}:= \int_{\mathbb{R}}(1+\xi^2)^s|\hat{f}(\xi)|^2d\xi <\infty,$ where $\hat{f}$ is the Fourier transform of $f$. The space $H^s$ is a  Hilbert space with the inner product denoted by $(\cdot, \cdot)_{H^s}$. When $s=0$, the space $H^s$ is isometrically isomorphic to the space  $L^2$ and will be denoted by $L^2:=H^0$. The norm and inner product in $L^2$ will be denoted by $\|\cdot \|_{L^2}$ and $(\cdot, \cdot)_{L^2}$. For a complex function $f=f_1+if_2\equiv(f_1,f_2)$, we denote the space $\mathbb{H}^s := H^s \times H^s$ for all $s \geq 0$. Given $z=x+iy \in \mathbb{C}$, we denote $|z|=\sqrt{x^2+y^2}.$ For $s\geq0$, notation $\mathbb{H}_e^s$ indicates the space constituted by complex functions $v\in\mathbb{H}^s$ such that $v$ is even. For $p \geq 1$, we denote $L^p = L^p(\mathbb{R})$ as the space of all Lebesgue measurable real functions $f$ on $\mathbb{R}$ that satisfy $\int_{\mathbb{R}} |f(x)|^pdx < \infty$, with norm $\|f\|_{L^p} = \left( \int_{\mathbb{R}} |f(x)|^p dx \right)^{\frac{1}{p}}$. The notation $\mathbb{L}^p$ refers to the natural product space $\mathbb{L}^p = L^p \times L^p$ in the complex context. For $p=2$, we have $L^p=L^2$ as defined above.

\section{Existence of solitary waves}\label{section-existence}

In this section, we are going to prove the existence of even solitary wave solutions $\varphi$ for the equation \eqref{edo}. 

\subsection{Existence of even positive solutions via minimization problem} Let $\alpha > 0$ be fixed.  To obtain even solitary wave solutions that solve \eqref{edo}, we first use a variational approach in order to minimize a suitable  constrained functional. Indeed, let $\tau > 0$ be fixed and consider the set
$
	\mathcal{Y}_\tau := \left\{ u \in \mathbb{H}^2;\ \int_{\mathbb{R}} |u|^{\alpha + 2} dx = \tau \right\}.
$ For $\omega > 0$, we define the functional $\mathcal{B}_\omega: \mathbb{H}_e^2 \rightarrow \mathbb{R}$ given by
\begin{equation}\label{functB}
	\mathcal{B}_\omega(u) := \frac{1}{2} \int_{\mathbb{R}} |u_{xx}|^2+|u_x|^2 + \omega |u|^2 dx.
\end{equation}
\indent Using the classical concentration compactness principle (see \cite{lions1} and \cite{lions2}), we have the first result that guarantees the existence of complex solitary waves that solves $(\ref{min-problem})$ and equation $(\ref{edo})$ in the weak sense. \begin{proposition}\label{minimizationproblem}
	Let $\tau>0$ be fixed and consider $\omega > 0$. The minimization problem
	\begin{equation}\label{min-problem}
		\Gamma_{\omega,\tau}:= \inf_{u \in \mathcal{Y}_\tau} \mathcal{B}_\omega(u)
	\end{equation}
has at least one solution, that is, there exists a complex-valued  function $\Phi \in \mathcal{Y}_\tau$ such that $\mathcal{B}_\omega(\Phi) = \Gamma_{\omega,\tau}$. Moreover, $\Phi$ is an (weak) solution of the equation	
\begin{equation}\label{eq-min-problem}
	\Phi''''-\Phi'' + \omega \Phi - |\Phi|^\alpha \Phi = 0.
\end{equation}
\end{proposition}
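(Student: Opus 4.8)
The plan is to run the classical concentration--compactness argument of Lions (\cite{lions1},\cite{lions2}) on a minimizing sequence; the two features that deserve attention are the presence of two derivative levels in $\mathcal{B}_\omega$ and the verification of strict subadditivity.

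First I would record two preliminary facts. (a) \emph{Positivity and boundedness:} the quadratic form $\|u_{xx}\|_{L^2}^2+\|u_x\|_{L^2}^2+\omega\|u\|_{L^2}^2$ is equivalent to $\|u\|_{\mathbb{H}^2}^2$, and the one-dimensional embedding $\mathbb{H}^2\hookrightarrow\mathbb{L}^{\alpha+2}$ (valid for every $\alpha>0$) gives $\tau=\|u\|_{\mathbb{L}^{\alpha+2}}^{\alpha+2}\le C\,\mathcal{B}_\omega(u)^{(\alpha+2)/2}$ on $\mathcal{Y}_\tau$, hence $\mathcal{B}_\omega(u)\ge c\,\tau^{2/(\alpha+2)}>0$ and minimizing sequences are bounded in $\mathbb{H}^2$. (b) \emph{Strict subadditivity:} the dilation $u\mapsto\mu u$ maps $\mathcal{Y}_\tau$ onto $\mathcal{Y}_{\mu^{\alpha+2}\tau}$ and multiplies $\mathcal{B}_\omega$ by $\mu^{2}$, so $\Gamma_{\omega,\theta}=\Gamma_{\omega,1}\,\theta^{2/(\alpha+2)}$ for every $\theta>0$; since $0<\tfrac{2}{\alpha+2}<1$, the map $\theta\mapsto\Gamma_{\omega,\theta}$ is strictly concave and vanishes at $0$, hence is strictly subadditive, $\Gamma_{\omega,\theta_1+\theta_2}<\Gamma_{\omega,\theta_1}+\Gamma_{\omega,\theta_2}$ for $\theta_1,\theta_2>0$.

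Next I would fix a minimizing sequence $(u_n)\subset\mathcal{Y}_\tau$ with $\mathcal{B}_\omega(u_n)\to\Gamma_{\omega,\tau}$, bounded in $\mathbb{H}^2$ by (a), and apply the concentration--compactness lemma to the energy densities $\rho_n:=|u_{n,xx}|^2+|u_{n,x}|^2+\omega|u_n|^2$, whose total mass satisfies $\int_{\mathbb{R}}\rho_n\,dx=2\mathcal{B}_\omega(u_n)\to 2\Gamma_{\omega,\tau}=:\lambda>0$. Vanishing is impossible: from $\rho_n\ge\omega|u_n|^2$ it would give $\sup_{y}\int_{y-R}^{y+R}|u_n|^2\,dx\to0$, which together with the $\mathbb{H}^1$-bound forces $u_n\to0$ in $\mathbb{L}^{\alpha+2}$, contradicting $\|u_n\|_{\mathbb{L}^{\alpha+2}}^{\alpha+2}=\tau>0$. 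Dichotomy is ruled out by (b): cutting $u_n=v_n+w_n$ with smooth cutoffs adapted to the splitting of $\rho_n$ (supports eventually a distance $\to\infty$ apart) one gets, along a subsequence, $\mathcal{B}_\omega(u_n)\ge\mathcal{B}_\omega(v_n)+\mathcal{B}_\omega(w_n)-o(1)$ with $\mathcal{B}_\omega(v_n)\to\mu/2>0$, $\mathcal{B}_\omega(w_n)\to(\lambda-\mu)/2>0$, while $\|v_n\|_{\mathbb{L}^{\alpha+2}}^{\alpha+2}\to\tau_1$ and $\|w_n\|_{\mathbb{L}^{\alpha+2}}^{\alpha+2}\to\tau_2$ with $\tau_1+\tau_2=\tau$; since the energies stay bounded away from $0$ one excludes $\tau_1=0$ and $\tau_2=0$, so $\tau_1,\tau_2\in(0,\tau)$ and then $\Gamma_{\omega,\tau}\ge\Gamma_{\omega,\tau_1}+\Gamma_{\omega,\tau_2}$ contradicts strict subadditivity. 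Hence compactness holds: after a translation, $\rho_n(\cdot+y_n)$ is tight, so (using the $\mathbb{H}^2$-bound, Rellich, and tightness to pass from local to global $\mathbb{L}^{\alpha+2}$-convergence) a subsequence of $u_n(\cdot+y_n)$ converges weakly in $\mathbb{H}^2$ and strongly in $\mathbb{L}^{\alpha+2}$ to some $\Phi$. Strong convergence gives $\Phi\in\mathcal{Y}_\tau$, weak lower semicontinuity of $\mathcal{B}_\omega$ gives $\mathcal{B}_\omega(\Phi)\le\liminf_n\mathcal{B}_\omega(u_n(\cdot+y_n))=\Gamma_{\omega,\tau}$, and $\Phi\in\mathcal{Y}_\tau$ forces the reverse inequality; thus $\Phi$ is a minimizer.

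Finally, the Lagrange multiplier theorem applied at $\Phi$ yields $\Phi''''-\Phi''+\omega\Phi=c\,|\Phi|^\alpha\Phi$ in the weak sense for some real $c$; pairing the equation with $\Phi$ gives $c=2\Gamma_{\omega,\tau}/\tau>0$, and the rescaling $\Phi\mapsto c^{1/\alpha}\Phi$ — which merely changes the value of the constraint, and $\tau>0$ is arbitrary — produces a weak solution of \eqref{eq-min-problem}. I expect the main obstacle to be the dichotomy step: besides the strict subadditivity, which is precisely where the hypothesis $\alpha>0$ enters through $\tfrac{2}{\alpha+2}<1$, one must perform the cutoff decomposition so that the second-derivative cross terms produced by differentiating the cutoffs are genuinely $o(1)$. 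This is the reason for running the concentration--compactness principle on the full energy density $\rho_n$ rather than on $|u_n|^{\alpha+2}$: dichotomy of $\rho_n$ directly controls $\int(|u_n''|^2+|u_n'|^2+\omega|u_n|^2)$ on the transition annulus, which is exactly the quantity needed to split $\mathcal{B}_\omega$.
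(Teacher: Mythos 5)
Your proposal is correct and follows essentially the same route as the paper: the paper delegates the concentration--compactness argument to the cited references (Bonheure et al.\ and Levandosky, where exactly your subadditivity-via-homogeneity and vanishing/dichotomy exclusion are carried out), and its treatment of the Euler--Lagrange equation --- pairing with $\overline{\Phi}$ to get $2\mathcal{B}_\omega(\Phi)=C\tau>0$ and rescaling to normalize $C=1$ --- is exactly your final step.
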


\begin{proof} The first part follows from similar ideas as in \cite[Theorem 1.2]{denis2}, \cite[Theorem 1.1]{denis} and \cite[Section 2]{levandosky}. The second part follows from Lagrange multiplier theorem, which asserts the existence of a constant $C\in\mathbb{R}$ such that
\begin{equation}\label{eq-min-problem33}
	\Phi''''-\Phi'' + \omega \Phi - C|\Phi|^\alpha \Phi = 0.
\end{equation}
Multiplying equation $(\ref{eq-min-problem33})$ by $\overline{\Phi}$ and integrating over $\mathbb{R}$, we obtain that $2\mathcal{B}_{\omega}(\Phi)=C\tau$. This means that $C>0$, and we can assume $C=1$ (see \cite[page 629]{Cristofani} for details on how to do so). This last fact establishes $(\ref{eq-min-problem})$ and finishes the proof of the proposition.
\end{proof}

\begin{remark}\label{converg}
From $(\ref{eq-min-problem})$ we automatically conclude that $\Phi$ obtained in Proposition $\ref{minimizationproblem}$ is an element of $\mathbb{H}^{4}$. This fact enables to deduce $\Phi^{(n)}(x)\rightarrow 0$ as $|x|\rightarrow \infty$ for all $n=0,1,2,3$. If $\alpha$ is an even integer, we obtain by a standard bootstrapping argument, that $\Phi$ is smooth and $\Phi^{(n)}(x)\rightarrow 0$ as $|x|\rightarrow \infty$ for all $n\in\mathbb{N}$.
\end{remark}

\indent Next, we have the following important result that connects the explicit solution $(\ref{expsolode})$ with the minimization problem $(\ref{min-problem})$.

\begin{proposition}\label{sechmin}
Consider $\phi$ as the solution in $(\ref{expsolode})$. Then $\phi$ can be considered a minimizer of the problem $(\ref{min-problem})$.
\end{proposition}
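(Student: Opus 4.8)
The plan is to fix the constraint level so that the explicit wave $\phi$ is admissible for \eqref{min-problem}, reduce the claim to the reverse of an elementary inequality, and then bring that inequality down to a rigidity statement for \eqref{edo} using only the Euler--Lagrange equation and the scale-invariance of the functional. (The phrase ``can be considered a minimizer'' already allows replacing $\phi$ by $e^{i\theta_0}\phi(\cdot-x_0)$, so one works up to translation and rotation throughout.) \textbf{Step 1.} Since $\phi$ in \eqref{expsolode} solves \eqref{edo} with $\omega=\omega_0$, set $\tau_0:=\int_{\mathbb R}\phi^{\alpha+2}\,dx>0$, so $\phi\in\mathcal Y_{\tau_0}$. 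Multiplying \eqref{edo} by $\phi$ and integrating by parts (legitimate by the exponential decay of $\phi$) gives $\int(\phi'')^2+\int(\phi')^2+\omega_0\int\phi^2=\int\phi^{\alpha+2}=\tau_0$, that is, $\mathcal B_{\omega_0}(\phi)=\tfrac{\tau_0}{2}$. Hence $\Gamma_{\omega_0,\tau_0}\le\tfrac{\tau_0}{2}$, and the proposition reduces to the reverse inequality $\Gamma_{\omega_0,\tau_0}\ge\tfrac{\tau_0}{2}$.

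\textbf{Step 2.} By Proposition \ref{minimizationproblem} there is a minimizer $\Phi$ of $\Gamma_{\omega_0,\tau_0}$ solving $\Phi''''-\Phi''+\omega_0\Phi-C|\Phi|^{\alpha}\Phi=0$ for some $C>0$; pairing with $\overline\Phi$ gives $2\mathcal B_{\omega_0}(\Phi)=C\tau_0$, so $C=2\Gamma_{\omega_0,\tau_0}/\tau_0\le1$ by Step 1. Since $\mathcal B_{\omega_0}$ is $2$-homogeneous and the constraint $(\alpha+2)$-homogeneous, one has $\Gamma_{\omega_0,\tau}=(\tau/\tau_0)^{2/(\alpha+2)}\Gamma_{\omega_0,\tau_0}$, and $\Psi:=C^{1/\alpha}\Phi$ is a genuine $\mathbb H^{4}$ solution of \eqref{edo} at $\omega_0$ which is again a minimizer, now at the level $\int|\Psi|^{\alpha+2}=C^{(\alpha+2)/\alpha}\tau_0\le\tau_0$. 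A Fourier-rearrangement argument (the symbol $|\xi|^{4}+|\xi|^{2}+\omega_0$ is radially nondecreasing, so rearranging $\widehat\Phi$ does not increase $\mathcal B_{\omega_0}$ nor decrease $\|\cdot\|_{L^{\alpha+2}}$, after which one rescales back onto the constraint) lets us additionally take $\Phi$, hence $\Psi$, real, even and with $\widehat\Psi\ge0$ --- the same structural features as $\phi$, whose Fourier transform is strictly positive.

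\textbf{Step 3 (the crux).} It remains to prove $C\ge1$, equivalently that $\phi$ carries the least $L^{\alpha+2}$-mass among all $\mathbb H^{4}$ solutions of \eqref{edo} at $\omega_0$ --- a ground-state property. I would obtain this by identifying $\Psi$ with a translate of $\phi$, invoking the uniqueness/classification of positive even $H^{4}$ solutions of the mixed-dispersion equation \eqref{edo} established in \cite{denis,denis2} (note that $\omega_0(\alpha)=\bigl(2(\alpha+2)/(\alpha^{2}+4\alpha+8)\bigr)^{2}\in(0,\tfrac14)$ for all $\alpha>0$, precisely the range in which positive profiles occur), after which $\int|\Psi|^{\alpha+2}=\tau_0$ forces $C=1$ and hence $\mathcal B_{\omega_0}(\phi)=\Gamma_{\omega_0,\tau_0}$. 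If such a classification is not directly available at the specific value $\omega=\omega_0$, the alternative is to show by hand that $\phi$ saturates the sharp Gagliardo--Nirenberg-type inequality $\|u\|_{L^{\alpha+2}}^{\alpha+2}\le K_{\omega_0}\,\mathcal B_{\omega_0}(u)^{(\alpha+2)/2}$, computing the relevant $\sech$-integrals explicitly to check that equality is attained at $\phi$.

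Step 3 is the only genuine obstacle. The standard shortcut ``a positive solution of the Euler--Lagrange equation is automatically the ground state'' relies on the maximum principle and on symmetric-decreasing rearrangement, both of which break down for the biharmonic term $\int|u''|^2$; so one really needs either the delicate uniqueness theory for \eqref{edo} or an extremality computation adapted to the $\sech^{4/\alpha}$ profile. Steps 1 and 2 are routine integrations by parts and homogeneity bookkeeping.
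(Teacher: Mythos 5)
Your Steps 1 and 2 reproduce the paper's computation exactly: pairing \eqref{edo} with $\phi$ gives $\mathcal{B}_{\omega_0}(\phi)=\tau_0/2$, and pairing the Euler--Lagrange equation with $\overline\Phi$ gives $2\mathcal{B}_{\omega_0}(\Phi)=C\tau_0$. The paper then finishes in one line by taking from Proposition \ref{minimizationproblem} that the minimizer at the level $\tau_0$ already solves \eqref{eq-min-problem} with unit coefficient, so that $\Gamma_{\omega_0,\tau_0}=\mathcal{B}_{\omega_0}(\Phi)=\tau_0/2=\mathcal{B}_{\omega_0}(\phi)$. You instead (not unreasonably) observe that the normalization $C=1$ is produced by a rescaling that moves the constraint level, so that at the fixed level $\tau_0$ one only gets $C\le1$, and you reduce the proposition to the reverse inequality $C\ge1$. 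Having made that reduction, you do not prove it: Step 3 is a statement of intent rather than an argument. The uniqueness/classification of positive even solutions of \eqref{edo} that you propose to import from \cite{denis,denis2} is not available there in the form you need (those papers establish uniqueness and nondegeneracy of \emph{minimizers} under restrictive parameter conditions, not a classification of all $\mathbb{H}^4$ solutions of \eqref{edo} at $\omega=\omega_0$), and the alternative sharp Gagliardo--Nirenberg computation is announced but not carried out. As written, the proposal therefore does not establish the statement.

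There is a second, independent gap in Step 2. You claim a Fourier-rearrangement argument lets you take the minimizer real, even, with $\widehat\Psi\ge0$, on the grounds that rearranging $\widehat\Phi$ does not decrease the $L^{\alpha+2}$ norm. That is precisely the property the paper flags as unknown: Remark \ref{remarkeven}, following \cite{lezsok}, explains that the Fourier rearrangement argument is only justified when $\alpha$ is an even positive integer, exactly because one does not know whether $\|f\|_{\mathbb{L}^p}\le\|f^{\#}\|_{\mathbb{L}^p}$ in general. So the symmetrization you rely on to feed a positive even profile into your Step 3 is itself unavailable for general $\alpha>0$. The economical route is the paper's: accept the output of Proposition \ref{minimizationproblem} as stated, compute $\Gamma_{\omega_0,\tau_0}=\tau_0/2$, and compare directly with $\mathcal{B}_{\omega_0}(\phi)=\tau_0/2$; no rigidity, positivity, or uniqueness statement for \eqref{edo} is then needed. (If you believe the normalization in Proposition \ref{minimizationproblem} itself deserves more justification at the fixed level $\tau_0$, that is a criticism to raise against that proposition, but it cannot be repaired here by appealing to results that do not exist in the required generality.)
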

\begin{proof}  Let $\phi$ be the solution given by $(\ref{expsolode})$. Since $\int_{\mathbb{R}} \phi^{\alpha+2}dx > 0$, we can assume in the minimization problem $(\ref{min-problem})$, given that $\omega$ and $\tau$ are arbitrary positive numbers, that $\omega = \omega_0$, where $\omega_0$ is given by $(\ref{omega})$, and $\tau=\tau_0 > 0$ is defined by $\tau_0 = \int_{\mathbb{R}} \phi^{\alpha+2}dx > 0$. By Proposition $\ref{minimizationproblem}$, we obtain a minimizer $\Phi$ for the functional $\mathcal{B}_{\omega_0}$ in $(\ref{functB})$, and $\Phi$ solves equation $(\ref{eq-min-problem})$. Multiplying equation $(\ref{eq-min-problem})$ by $\overline{\Phi}$ and integrating the result over $\mathbb{R}$, we obtain 
\begin{equation}\label{mingamma1}\Gamma_{\omega_0,\tau_0}=\mathcal{B}_{\omega_0}(\Phi)=\frac{\tau_0}{2}.\end{equation}
\indent On the other hand, since $\phi$ is a solution of $(\ref{edo})$, we can multiply equation $(\ref{edo})$ by $\phi$ and integrate the result over $\mathbb{R}$ to obtain 
\begin{equation}\label{mingamma2}\mathcal{B}_{\omega_0}(\phi)=\frac{1}{2}\int_{\mathbb{R}}\phi^{\alpha+2}dx=\frac{\tau_0}{2}.\end{equation}
Comparing $(\ref{mingamma1})$ and $(\ref{mingamma2})$, we obtain that $\phi$ can be considered a minimizer of the functional $\mathcal{B}_{\omega_0}$ as required, and we denote it as $\phi = \varphi$ henceforth to simplify the notation.
\end{proof}

\begin{remark}
    Let $s \in (0,1]$ be fixed, and consider $g^{*} : \mathbb{R} \rightarrow [0, +\infty)$ as the symmetric-decreasing rearrangement associated with the function $g : \mathbb{R} \rightarrow \mathbb{R}$. The function $g^{*}$ can be considered even, real-valued over the entire real line, and it has a single-lobe profile, meaning it has only one maximum point, which can be taken at $x = 0$. Such special  functions also arise as minimizers in problems similar to $(\ref{min-problem})$  as an application of the P\'olya-Szeg\"o inequality
\begin{equation}\label{polya}
    ||(-\Delta)^s g^{*}||_{L^2} \leq ||(-\Delta)^s g||_{L^2},
\end{equation}
and the fact that the norm in $L^p$ is preserved by symmetric-decreasing rearrangements (see, for instance, \cite[Proposition 3.6]{moraes}). Here, $(-\Delta)^s$ denotes the standard fractional Laplacian operator defined by
$
    \widehat{(-\Delta)^s f}(\xi) = |\xi|^{2s} \widehat{f}(\xi),
$
where $\xi \in \mathbb{R}$. As far as we can see, the same P\'olya-Szeg\"o inequality in $(\ref{polya})$ fails for symmetric-decreasing rearrangements if $s > 1$, and therefore has very limited use when dealing with higher-order derivatives (see \cite[Pages 3 and 4]{lezsok} for more details). This fact prevents us to use the useful inequality in $(\ref{polya})$ to obtain real-valued functions with single-lobe profile that solves the minimization problem in $(\ref{min-problem}).$

\end{remark}

\begin{remark}\label{remarkeven}
      If $\alpha$ is an even positive integer, the authors deduce in \cite[Theorem 3]{lezsok} (see also \cite{denis2} and \cite{denis}) that a minimizer $\varphi$ obtained in Proposition \ref{minimizationproblem} can be expressed as $\varphi = e^{i\theta_0}\varphi^{\#}(\cdot - x_0)$ for some $\theta_0 \in \mathbb{R}$ and $x_0 \in \mathbb{R}$, where $\varphi$ is a real-valued, even function. The function $\varphi^{\#}$ is defined as the Fourier rearrangement of the function $\varphi \in L^2$, and it is given by $f^{\#} = \mathcal{F}^{-1}\left\{(\mathcal{F}f)^{*}\right\}$, where $\mathcal{F}g=\widehat{g}$ denotes the standard Fourier transform in $\mathbb{L}^2$ and $\mathcal{F}^{-1}$ its inverse defined in the same space. Furthermore, the authors claim that for other values of $\alpha$, they do not know whether the minimizer can be expressed in the form $\varphi = e^{i\theta_0}\varphi^{\#}(\cdot - x_0)$. According to our best knowledge, the main problem lies in determining whether the $\mathbb{L}^p$-norm is preserved under Fourier rearrangement, or at least whether $||f||_{\mathbb{L}^p}\leq ||f^{\#}||_{\mathbb{L}^p}$. In the first case, we obtain a property that holds true when dealing with symmetric-decreasing rearrangements (see \cite[Pages 18 and 19]{lezsok} for a more detailed explanation of this important topic).
\end{remark}

\begin{proposition}\label{propunique}  Let $\varphi$ be a real minimizer obtained in Remark $\ref{remarkeven}$. We obtain that ${\rm n}(\mathcal{L}_{-})=1$ and ${\rm n}(\mathcal{L}_{+})=0.$ In particular, for the explicit solitary wave $\varphi$ in $(\ref{expsolode})$, we also have the same conclusion.
\end{proposition}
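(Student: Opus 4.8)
The plan is to exploit the fact that $\varphi$ is a minimizer of the constrained problem $(\ref{min-problem})$ together with the diagonal structure of $\mathcal{L}$. For the count ${\rm n}(\mathcal{L}_{-})=1$, I would argue as follows. Since $\varphi$ minimizes $\mathcal{B}_\omega$ on the constraint manifold $\mathcal{Y}_\tau$, the second variation of the Lagrangian $\mathcal{B}_\omega(u)-\frac{1}{\alpha+2}\int|u|^{\alpha+2}dx$ at $\varphi$ — which, after the normalization $C=1$ carried out in Proposition \ref{minimizationproblem}, is exactly $\mathcal{L}_{-}$ acting on the real part — is nonnegative on the tangent space to $\mathcal{Y}_\tau$ at $\varphi$, i.e. on $\{v : \int \varphi^{\alpha+1}v\,dx = 0\}$. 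This is a codimension-one subspace, so by the standard min-max/index argument $\mathcal{L}_{-}$ has at most one negative eigenvalue, hence ${\rm n}(\mathcal{L}_{-})\le 1$. For the lower bound, I would test $\mathcal{L}_{-}$ on $\varphi$ itself: from $(\ref{edo})$, $\mathcal{L}_{-}\varphi = \varphi''''-\varphi''+\omega\varphi-(\alpha+1)\varphi^{\alpha+1} = -\alpha\varphi^{\alpha+1}$, so
\[
(\mathcal{L}_{-}\varphi,\varphi)_{L^2} = -\alpha\int_{\mathbb R}\varphi^{\alpha+2}\,dx < 0,
\]
which forces ${\rm n}(\mathcal{L}_{-})\ge 1$. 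Combining the two bounds gives ${\rm n}(\mathcal{L}_{-})=1$.

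For ${\rm n}(\mathcal{L}_{+})=0$ the argument is different and I expect this to be the more delicate point. Here I would use the Fourier-side positivity: $\mathcal{L}_{+}\varphi = \varphi''''-\varphi''+\omega\varphi-\varphi^{\alpha+1}=0$ by $(\ref{edo})$, so $\varphi\in\Ker(\mathcal{L}_{+})$, and $\varphi$ is (or can be taken, via Remark \ref{remarkeven}, or directly for the explicit solution $(\ref{expsolode})$) a positive function. A positive function in the kernel of a Schrödinger-type operator is, by a Perron–Frobenius / ground-state type argument, the eigenfunction associated with the bottom of the spectrum; since $\mathcal{L}_{+}$ has purely continuous spectrum $[\omega,\infty)$ beyond finitely many eigenvalues, and $0<\omega$, the eigenvalue $0$ is the lowest one, whence there is no negative eigenvalue and ${\rm n}(\mathcal{L}_{+})=0$ (and incidentally ${\rm z}(\mathcal{L}_{+})=1$). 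The subtlety is that the classical Perron–Frobenius argument for second-order operators does not transfer verbatim to the fourth-order operator $\partial_x^4-\partial_x^2+\omega$; its semigroup is not positivity preserving in general. To handle this rigorously I would instead invoke the positivity of the Fourier transform of $\varphi$: the symbol $p(\xi)=\xi^4+\xi^2+\omega$ is positive, and for the explicit profile $(\ref{expsolode})$, $\widehat{\varphi}(\xi)>0$ can be verified directly since $\sech^{4/\alpha}$ has positive Fourier transform (it is a completely monotone-type profile), and more generally this positivity follows from the total positivity machinery of \cite{al}. Given $\widehat{\varphi}>0$, one shows that if $\mathcal{L}_{+}\psi=\lambda\psi$ with $\lambda<0$ then testing against $\varphi$ and using $\mathcal{L}_{+}\varphi=0$ together with the representation of $(\mathcal{L}_{+})^{-1}$ on the appropriate subspace leads to a contradiction with $(\psi,\varphi)_{L^2}$ having to be both zero (orthogonality of distinct eigenfunctions) and nonzero (a positive function cannot be orthogonal to an eigenfunction of positive Fourier transform on the negative subspace) — the precise form of this contradiction is exactly the content of the total-positivity lemma I would cite.

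Finally, for the last sentence of the statement, I would simply note that the explicit solitary wave $\varphi$ in $(\ref{expsolode})$ was shown in Proposition \ref{sechmin} to be a minimizer of $(\ref{min-problem})$ (with $\omega=\omega_0$, $\tau=\tau_0$), and it is manifestly real, even, and positive, so it falls under the hypotheses of the general statement and both conclusions ${\rm n}(\mathcal{L}_{-})=1$ and ${\rm n}(\mathcal{L}_{+})=0$ apply to it verbatim. The main obstacle, as indicated, is the rigorous justification of ${\rm n}(\mathcal{L}_{+})=0$ for the fourth-order operator, where the lack of a maximum principle means one must route the argument through the positivity of $\widehat{\varphi}$ and the total positivity theory rather than through a direct ground-state argument.
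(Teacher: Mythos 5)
Your treatment of $\mathcal{L}_{-}$ coincides with the paper's: the constrained second variation gives ${\rm n}(\mathcal{L}_{-})\le 1$, and testing on $\varphi$ itself, via $(\mathcal{L}_{-}\varphi,\varphi)_{L^2}=-\alpha\int_{\mathbb{R}}\varphi^{\alpha+2}\,dx<0$, gives the reverse inequality. For $\mathcal{L}_{+}$, however, you take a genuinely different and much heavier route, and you miss the short argument the paper actually uses. The paper applies the min--max principle to the \emph{full} two-component operator $\mathcal{L}=G_\omega''(\varphi,0)$: since $(\varphi,0)$ minimizes $\mathcal{B}_\omega$ over the single, codimension-one constraint $\int_{\mathbb{R}}|u|^{\alpha+2}\,dx=\tau$ in the complex space $\mathbb{H}^2$, the quadratic form of $\mathcal{L}$ is nonnegative on a codimension-one subspace of $\mathbb{L}^2$, hence ${\rm n}(\mathcal{L})\le 1$. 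Because $\mathcal{L}$ is diagonal, ${\rm n}(\mathcal{L})={\rm n}(\mathcal{L}_{-})+{\rm n}(\mathcal{L}_{+})$, so the negative direction already produced by $\mathcal{L}_{-}$ forces ${\rm n}(\mathcal{L}_{+})=0$ with no further work and no positivity input whatsoever.

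Your route through $\widehat{\varphi}>0$ and the Krein--Rutman/total positivity machinery is essentially the paper's later Proposition \ref{specL2}, where that machinery is genuinely needed to compute ${\rm z}(\mathcal{L}_{+})=1$ (which the variational argument cannot give). As a proof of ${\rm n}(\mathcal{L}_{+})=0$ alone it is overkill, and, more importantly, it does not cover the stated generality: the proposition is asserted for any real minimizer as in Remark \ref{remarkeven}, for which the positivity and PF(2)-type properties of the kernel $\widehat{\varphi^{\alpha}}$, as well as the hypotheses needed for the operators $\mathcal{P}_{\kappa}$, are only verified in the paper for the explicit $\sech^{4/\alpha}$ profile at $\omega=\omega_0$. (The Fourier rearrangement gives $\widehat{\varphi}\ge 0$, but that alone does not launch the total positivity argument; also note that a general minimizer need not be a positive function, as the paper's own remarks on sign-changing solutions for $\omega>0.25$ make clear.) So your sketch establishes the ``in particular'' clause but leaves a gap in the general case; the complex second-variation argument closes it for free.
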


\begin{proof}
	For $\omega>0$, we have to notice that the operator $\mathcal{L}$ can be obtained by defining the functional $G_{\omega}(u) = E(u) + \omega F(u)$ where $E$ and $F$ are conserved quantities associated to equation $(\ref{NLS-equation})$ given respectively by $(\ref{E})$ and $(\ref{F})$.\\
 \indent By $(\ref{edo}$) we have that $G_{\omega}'(\varphi,0) = 0$, that is, $(\varphi,0)$ is a critical point of $G$. In addition, we have that $G_{\omega}''(\varphi,0) = \mathcal{L}$, where $\mathcal{L}$ is the linear operator given by $(\ref{operator})$. Since $\varphi$ is a minimizer of the problem $(\ref{min-problem})$, we obtain by the min-max principle (see \cite[Theorem XIII.2]{ReedSimon}) that ${\rm n}(\mathcal{L}) \leq 1$. Moreover, using again the equation \eqref{edo}, we get
$
    (\mathcal{L}_{-} \varphi, \varphi)_{L^2_{per}} = - \alpha \int_{\mathbb{R}} |\varphi|^{\alpha+2} dx  = - \alpha \tau < 0,
$
that is, ${\rm n}(\mathcal{L}_{-}) \geq 1$, where $\mathcal{L}_{-}$ is the linear operator as in $(\ref{L1L2})$. This last fact enables us to conclude ${\rm n}(\mathcal{L}) = 1$ and since $\mathcal{L}$ has a diagonal form, we automatically obtain
$
    {\rm n}(\mathcal{L}_{-}) = 1$ and ${\rm n}(\mathcal{L}_{+}) = 0.
$
\end{proof}

\subsection{ Generation of solitary wave  solutions via a numerical approach}

In this subsection, we propose Petviashvili's iteration method to generate the standing solitary wave solutions of \eqref{NLS-equation}.
Setting $u(x,t) = e^{i\omega t} \varphi(x),~~~~\omega>0$ in \eqref{NLS-equation}   we obtain \eqref{edo}.  Then,  applying the Fourier transform  to \eqref{edo} yields
\begin{equation}\label{iterate1}
\left(  \xi^{4} +\xi^{2}+ \omega  \right) \widehat{\varphi}(\xi)-  \widehat{|\varphi|^{\alpha}\varphi}(\xi)=0, \quad \xi \in \mathbb{Z}.
\end{equation}
An iterative algorithm for numerical calculation of $\widehat{\varphi}(\xi)$ for the equation \eqref{iterate1} can be proposed in the form
\begin{equation}\label{iterative1}
\widehat{\varphi}_{n+1}(\xi)=\frac{  \widehat{|\varphi_n|^{\alpha}\varphi_n}(\xi)  }
{\xi^{4} +\xi^{2}+ \omega},\hspace{30pt} n\in\mathbb{N}
\end{equation}
where $\widehat{\varphi}_n(\xi)$ is the Fourier transform of  the $n^{\text{th}}$ iteration ${\varphi}_n$ of the numerical solution. 
\noindent Since the above algorithm is usually divergent, the stabilizing factor is defined  as
\begin{equation}\label{sf}
  M_n=\frac{\big(   (\partial_x^4-\partial_x^2+\omega)\varphi_n, \varphi_n   \big)_{L^2}}{(|\varphi_n|^\alpha\varphi_n, \varphi_n)_{L^2}}.
\end{equation}

\indent The Petviashvilli's method is then given by
\begin{equation}\label{iterative2}
\widehat{\varphi}_{n+1}(\xi)=\frac{(M_n)^{\nu}} {\xi^{4} +\xi^{2}+ \omega}  \widehat{|\varphi_n|^{\alpha}\varphi_n}(\xi),
\end{equation}
 where the free parameter $\nu$ is chosen as $(\alpha+2)/(\alpha+1)$ for the fastest convergence.
\noindent
The iterative process is controlled by the error between two consecutive iterations given by
$$
  \mbox{Error}(n)=\|\varphi_n-\varphi_{n-1}\|_{L^{\infty}}
$$
and the stabilization factor error given by
$ |1-M_n|. $ The residual error is determined by
$$
{RES(n)}= \|{ \mathcal{T}} \varphi_n\|_{L^{\infty}},$$ where  
$${ \mathcal{T}}\varphi= \varphi''''-\varphi'' + \omega \varphi - |\varphi|^\alpha \varphi. $$

\begin{figure}[h]
 \begin{minipage}[t]{0.4\linewidth}
  \includegraphics[width=3.2in]{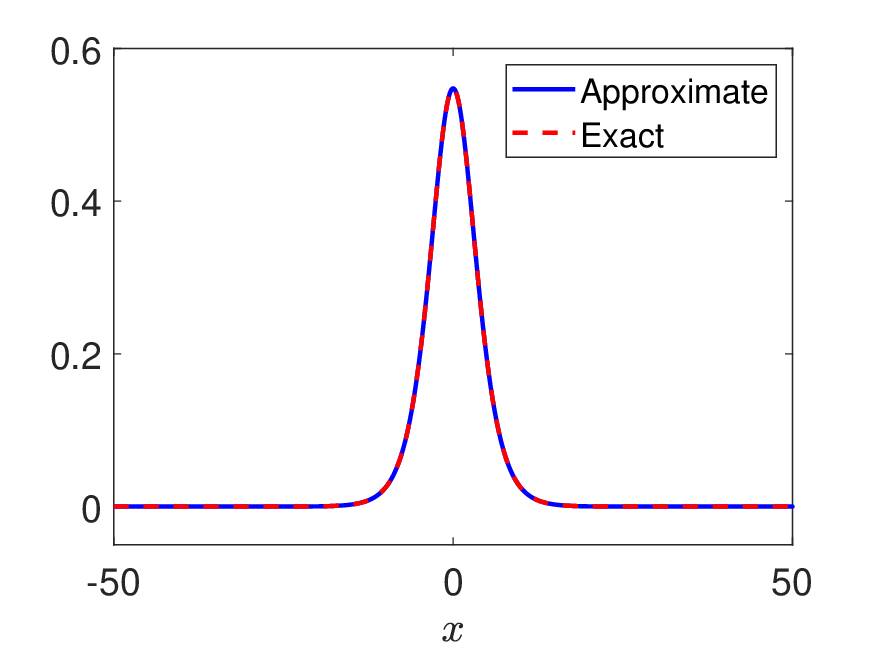}
 \end{minipage}
\hspace{30pt}
\begin{minipage}[t]{0.40\linewidth}
   \includegraphics[width=3.2in]{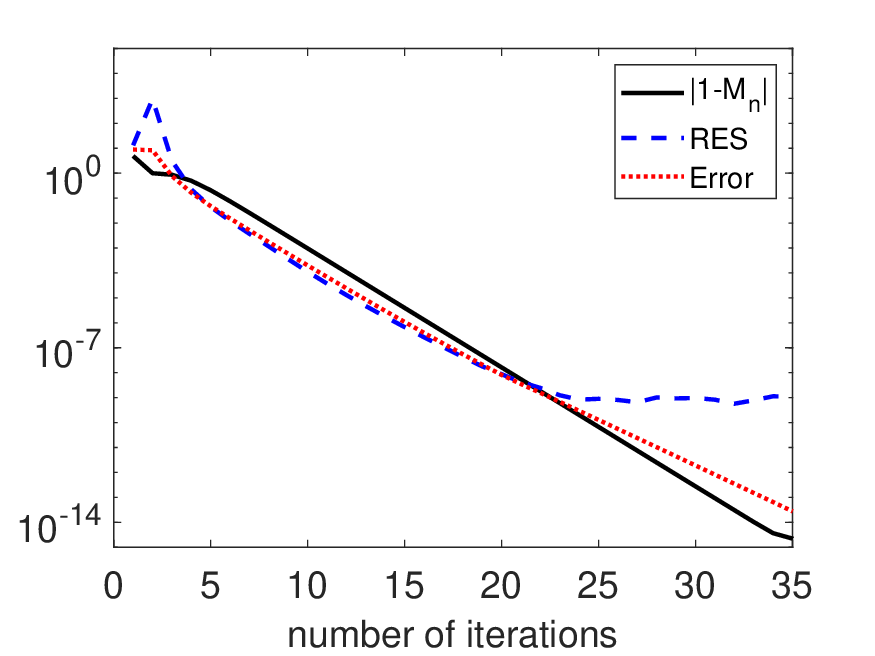}
 \end{minipage}
  \caption{Comparison of the exact solution \eqref{expsolode} with the numerical solution of \eqref{edo} for  $\omega=\omega_0=4/25$, $\alpha=2$ (left), and the variation of  \mbox{Error$(n)$}, $|1-M_n|$ and $RES(n)$ with the number of iterations in semi-log scale (right).}
 \label{exact_p2}
\end{figure}

\begin{figure}[h]
 \begin{minipage}[t]{0.4\linewidth}
  \includegraphics[width=3.2in]{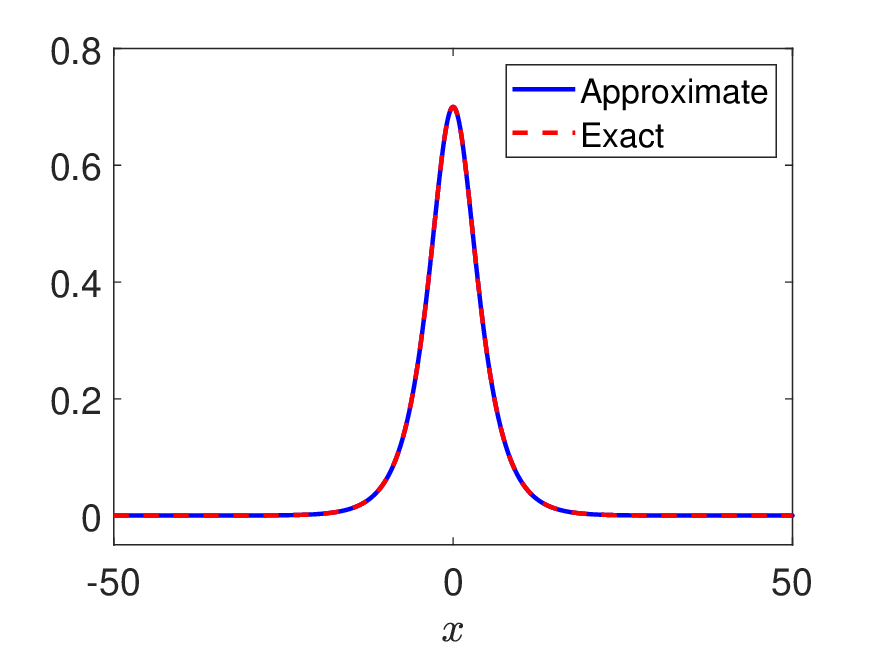}
 \end{minipage}
\hspace{30pt}
\begin{minipage}[t]{0.40\linewidth}
   \includegraphics[width=3.2in]{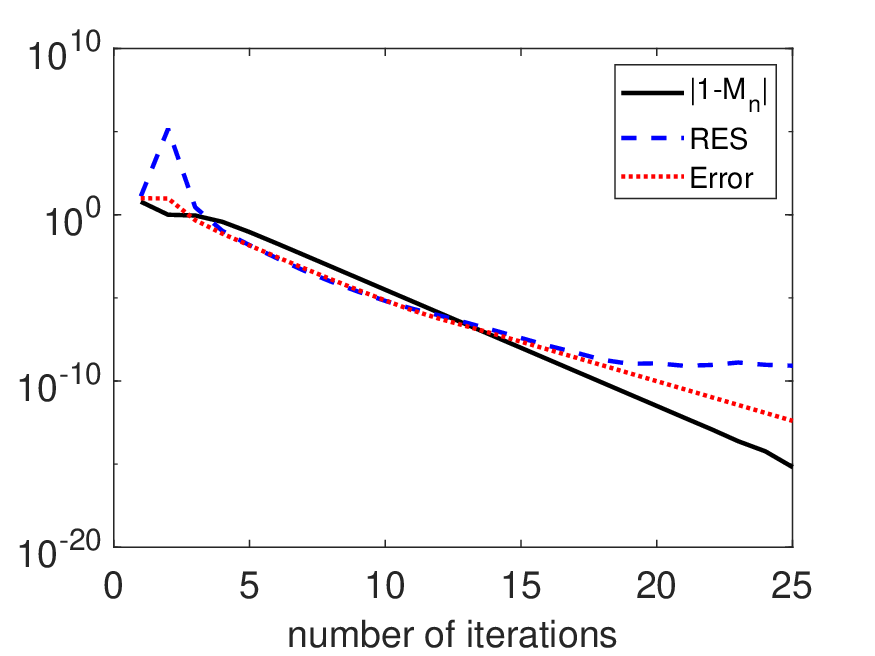}
 \end{minipage}
  \caption{Comparison of the exact solution \eqref{expsolode} with the numerical solution of \eqref{edo} for  $\omega=\omega_0=9/100$, $\alpha=4$ (left), and the variation of  \mbox{Error$(n)$}, $|1-M_n|$ and $RES(n)$ with the number of iterations in semi-log scale (right).}
 \label{exact_p4}
\end{figure}
  
\indent The expression in \eqref{expsolode}  gives the exact solution of \eqref{edo} for each $\alpha$ and a fixed $\omega_0$.
In order to test the accuracy of our scheme, we compare the exact solution \eqref{expsolode} with the numerical solution. We choose the space interval as $ [-200,200] $ and the number of grid points as $N=2^{13}$. In the left panels of Figure \ref{exact_p2} and Figure \ref{exact_p4}, we show the comparison of the exact solution \eqref{expsolode} with the numerical solution of \eqref{edo} for  $\omega=\omega_0=4/25$, $\alpha=2$ and for  $\omega=\omega_0=9/100$, $\alpha=4$, respectively. The right panels of the figures present the variation of  \mbox{Error$(n)$}, $|1-M_n|$ and $RES(n)$ with the number of iterations in semi-log scale for the same values of $\alpha$ and $\omega$. We also observe that the $L_\infty$ error between the exact and numerical solution is of order $10^{-14}$.  These results show that our numerical scheme captures the solution quite well.

\begin{figure}[h]
 \begin{minipage}[t]{0.4\linewidth}
  \includegraphics[width=3.2in]{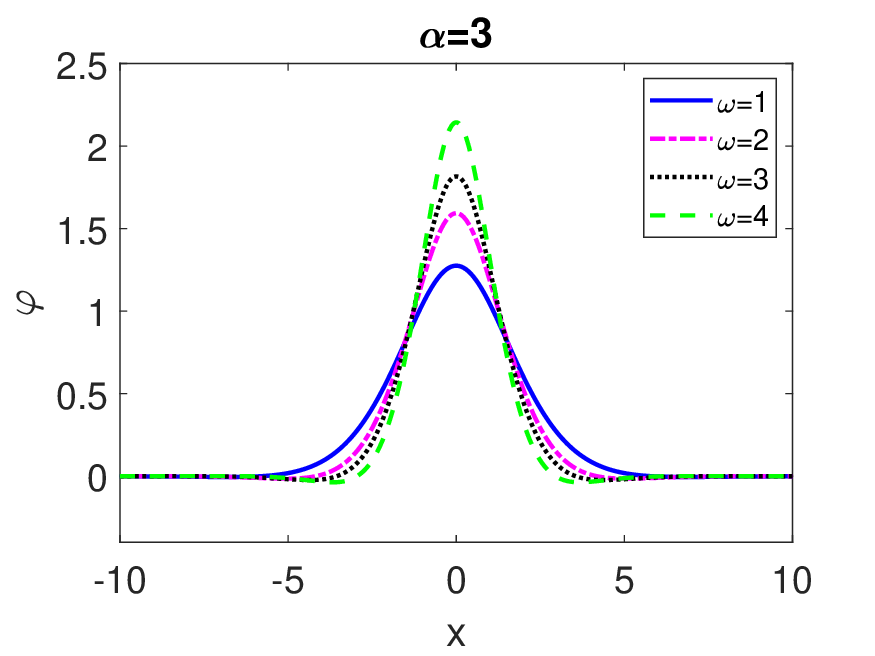}
 \end{minipage}
\hspace{30pt}
\begin{minipage}[t]{0.40\linewidth}
   \includegraphics[width=3.2in]{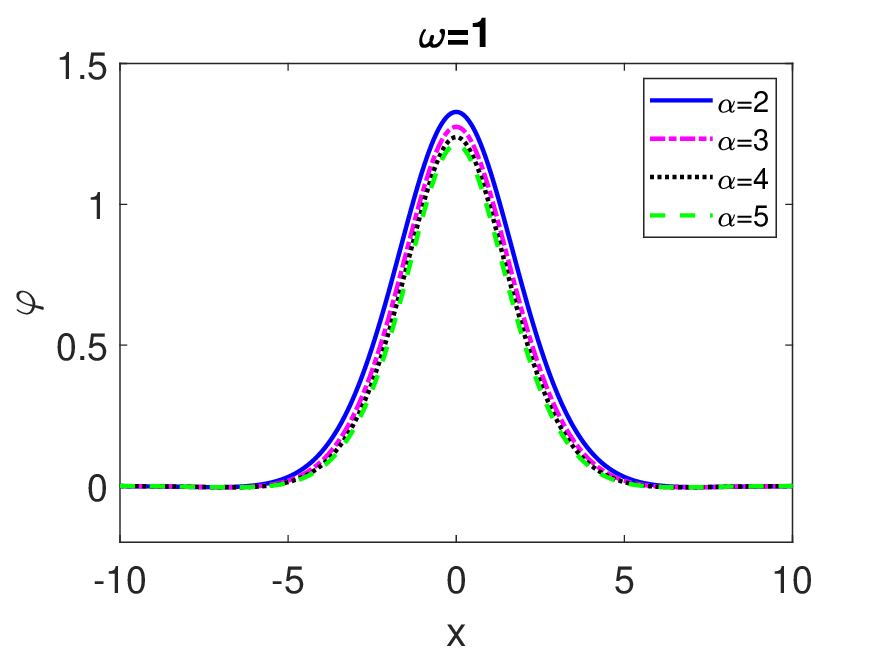}
 \end{minipage}
  \caption{Numerical wave profiles for various values of $\omega$, with  $\alpha=3$ (left) and numerical wave profiles for various values of $\alpha$ with wave frequency $\omega=1$ (right).}
 \label{wave_profiles}
\end{figure}



Now, we generate solutions numerically for several values of $\alpha$ and $\omega \neq \omega_0$ where the exact solution does not exist. 
The left panel of Figure \ref{wave_profiles} shows the profiles of the solutions for a fixed value of $\alpha$ and various values of $\omega$. In the right panel, we present the profiles for a fixed value of $\omega$ and several values of $\alpha$.
 It is known that the fourth-order ODE \eqref{edo} is identical to the ODEs satisfied by the traveling wave solutions of both fifth-order KdV (\cite{PC}) and Kawahara (\cite{sprenger}) equations if the nonlinearities are the same.  According to \cite{PC} and \cite{sprenger} the solutions have decaying oscillatory tails for $\omega > 0.25$. Performing lots of numerical experiments for several values of $\alpha$ and $\omega$, we observe the same result numerically.  Even if they are not visible on some plots of Figure \ref{wave_profiles}, the oscillations still exist when $\omega > 0.25$. Here, we point out that the $\omega_0$ for any $\alpha$ in \eqref{omega} corresponding to the exact solution is already less than $0.25$.


\begin{remark}
The equation $(\ref{edo})$ yields non-positive solutions for  \mbox{$\omega>0.25$.} This point is very delicate compared with similar results in the current literature, as it is well known for the solutions of the standard NLS equation $-\varphi''+\omega\varphi-|\varphi|^{\alpha}\varphi=0$ where all smooth solutions that decay at infinity are positive.
\end{remark}

\begin{remark}
As reported in \cite[Theorem 1.4]{denis}, solitary waves that change their sign may occur if $\omega>0.25$. Besides, the solitary waves solve the minimization problem $(\ref{min-problem})$; they also arise as solutions of the problem
$
\nu_\omega:= \inf_{u\in \mathbb{H}^2} G_{\omega}(u),
$ where $G_{\omega}(u)=E(u)+\omega F(u)$. Our experiments somehow match with the scenario in \cite[Theorem 1.4]{denis} since the solutions of $(\ref{edo})$ are non-positive when $\omega > 0.25$ for each $\alpha>0$.
\end{remark}
\section{Spectral analysis}\label{section-spectral}
\subsection{Brief review of total positivity theory}\label{reviewsec} Our intention is to study the spectrum of the linear operator $\mathcal{L}$ in $(\ref{operator})$ and to do so, we need to consider some tools of the total positivity theory established in \cite{al}, \cite{albo}, and
\cite{ka}. Let $\mathcal{S}$ be the operator defined on
a dense subspace of $L^2$ by
\begin{equation}\label{opT}
\mathcal{S}g(x)=Mg(x)+ \omega g(x)-(\alpha+1)\psi^{\alpha}(x)g(x),
\end{equation}
where $\alpha\geq1$ is real number, $\omega>0$ is a real parameter, $\psi$ is an even real-valued positive solution of the general equation
\begin{equation}\label{solitary}
(M+\omega)\psi=\psi^{\alpha+1}.
\end{equation}
\indent We also assume that function $\psi$ has a suitable decay at infinity and $M$ is defined as a Fourier multiplier operator by
$
\widehat{Mg}(\xi)=m(\xi)\widehat{g}(\xi),
$
where $\xi\in\mathbb{R}$. Here, the circumflexes notation in the last equality denotes the Fourier transform, $m(\xi)$ is a measurable,
locally bounded, even function defined on $\R$, and satisfying the following assumptions:\\
    {\rm (i)} $A_1|\xi|^\mu\leq m(\xi)\leq A_2|\xi|^\mu$ for $|\xi|\geq
    \xi_0$,\\
    {\rm (ii)} $m(\xi)\geq 0$,\\
\noindent where $A_1$, $A_2$, and $\xi_0$ are positive real constants, and $\mu\geq1$. Under the above assumptions we
have the following basic result.

\begin{lemma}\label{gelema}
The operator $\mathcal{S}$ is a closed, unbounded, self-adjoint
operator on $L^2$ whose spectrum consists of the interval
$[\omega,\infty)$ together with a finite number of discrete
eigenvalues in the interval $(-\infty,\omega]$, in which all of them
have finite  multiplicity. In addition, zero is an
eigenvalue of $\mathcal{S}$ with eigenfunction  $\psi'$.
\end{lemma}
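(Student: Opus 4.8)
The plan is to view $\mathcal{S}$ as a bounded, in fact $(M+\omega)$-compact, perturbation of the Fourier multiplier $M+\omega$ and to extract the spectral picture from standard perturbation theory. First I would record that $M+\omega$, with natural domain $D(M)=\{g\in L^2:\ m(\cdot)\widehat{g}\in L^2\}$, is self-adjoint, and that the spectral theorem for multiplication operators together with the hypotheses on $m$ (even, nonnegative, locally bounded, growing like $|\xi|^\mu$) gives $\sigma(M+\omega)=[\omega,\infty)$, consisting entirely of essential spectrum. Since $\psi$ is bounded and real, multiplication by $(\alpha+1)\psi^\alpha$ is a bounded self-adjoint operator on $L^2$; hence by the Kato--Rellich theorem $\mathcal{S}=(M+\omega)-(\alpha+1)\psi^\alpha$ is self-adjoint on $D(M)$, and a fortiori closed, and it is unbounded because $M$ is.

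The second step is to pin down the essential spectrum. I would show that $(\alpha+1)\psi^\alpha$ is $(M+\omega)$-compact, i.e.\ that $\psi^\alpha(M+\omega+i)^{-1}$ is compact on $L^2$. Writing $\psi^\alpha=\chi_R\psi^\alpha+(1-\chi_R)\psi^\alpha$ with $\chi_R$ a smooth cutoff equal to $1$ on $[-R,R]$, the operator $\chi_R\psi^\alpha(M+\omega+i)^{-1}$ sends $L^2$ boundedly into $\{h\in H^1:\ \mathrm{supp}\,h\subseteq\mathrm{supp}\,\chi_R\}$ (using $(M+\omega+i)^{-1}\colon L^2\to H^\mu\hookrightarrow H^1$ and that $\chi_R\psi^\alpha\in W^{1,\infty}$), and this set embeds compactly into $L^2$ by Rellich--Kondrachov, while $\|(1-\chi_R)\psi^\alpha(M+\omega+i)^{-1}\|\le\|(1-\chi_R)\psi^\alpha\|_{L^\infty}\|(M+\omega+i)^{-1}\|\to0$ as $R\to\infty$ by the decay of $\psi$. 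Thus $\psi^\alpha(M+\omega+i)^{-1}$ is a norm limit of compact operators, hence compact, and Weyl's theorem yields $\sigma_{\mathrm{ess}}(\mathcal{S})=\sigma_{\mathrm{ess}}(M+\omega)=[\omega,\infty)$. Therefore $\sigma(\mathcal{S})\cap(-\infty,\omega)$ consists only of isolated eigenvalues of finite multiplicity, and since $\mathcal{S}\ge(M+\omega)-(\alpha+1)\|\psi\|_{L^\infty}^\alpha\ge\omega-(\alpha+1)\|\psi\|_{L^\infty}^\alpha$, these eigenvalues all lie in a bounded interval.

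It remains to prove that there are only finitely many such eigenvalues, and this is the step I expect to be the main obstacle: one must exclude an accumulation of eigenvalues at the left edge $\omega$ of the essential spectrum, which is not guaranteed by discreteness alone. I would argue via the Birman--Schwinger principle: for $\lambda<\omega$ the number of eigenvalues of $\mathcal{S}$ below $\lambda$ equals the number of eigenvalues exceeding $1$ of the compact operator $K_\lambda=(\alpha+1)\,\psi^{\alpha/2}(M+\omega-\lambda)^{-1}\psi^{\alpha/2}$ (legitimate since $\psi>0$), and one needs this count bounded uniformly as $\lambda\uparrow\omega$. In the setting relevant to this paper, where $m(\xi)=\xi^4+\xi^2\ge\xi^2$, this is immediate from the form inequality $M\ge-\partial_x^2$: it gives $\mathcal{S}-\omega\ge-\partial_x^2-(\alpha+1)\psi^\alpha$, so the number of eigenvalues of $\mathcal{S}$ below $\omega$ is at most the number of negative eigenvalues of the one-dimensional Schr\"odinger operator $-\partial_x^2-(\alpha+1)\psi^\alpha$, which is finite by a Bargmann-type bound of the form $1+(\alpha+1)\int_{\mathbb{R}}|x|\psi^\alpha(x)\,dx<\infty$, using the (exponential) decay of $\psi$. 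In the general framework of Section~\ref{reviewsec} the same finiteness follows from the assumed decay of $\psi$, either by comparison with such a reference operator or by checking boundedness of the limiting Birman--Schwinger kernel; I would present the comparison argument, being the shortest.

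Finally, that $0$ is an eigenvalue with eigenfunction $\psi'$ is obtained by differentiating the profile equation $(M+\omega)\psi=\psi^{\alpha+1}$ in $x$, which gives $(M+\omega)\psi'=(\alpha+1)\psi^\alpha\psi'$, i.e.\ $\mathcal{S}\psi'=0$; here $\psi'\in D(M)$ because $\psi$ is smooth with $\psi$ and its derivatives decaying, and $\psi'\not\equiv0$ since $\psi$ is a nonconstant even positive function. This completes the proposed proof.
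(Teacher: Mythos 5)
Your argument is correct, but it is worth noting that the paper does not actually prove this lemma: its ``proof'' is a one-line citation of Proposition~1 in \cite{abh}, so what you have written is essentially a self-contained reconstruction of the argument that the paper outsources. Your route is the standard one and each step holds up: Kato--Rellich gives self-adjointness of $\mathcal{S}=(M+\omega)-(\alpha+1)\psi^{\alpha}$ on $D(M)$ since the potential is a bounded symmetric multiplication operator; the cutoff decomposition of $\psi^{\alpha}(M+\omega+i)^{-1}$ (using $(M+\omega+i)^{-1}\colon L^2\to H^{\mu}\hookrightarrow H^{1}$ and the decay of $\psi$) correctly establishes relative compactness, so Weyl's theorem gives $\sigma_{\mathrm{ess}}(\mathcal{S})=[\omega,\infty)$; and differentiating the profile equation, using that $M$ commutes with $\partial_x$, gives $\mathcal{S}\psi'=0$ with $\psi'\not\equiv 0$. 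You are also right to single out finiteness of the discrete spectrum as the only step requiring a genuine idea, and your comparison $\mathcal{S}-\omega\ge -\partial_x^2-(\alpha+1)\psi^{\alpha}$ together with the one-dimensional Bargmann bound settles it cleanly for the operator actually used in this paper, where $m(\xi)=\xi^4+\xi^2\ge\xi^2$. The one thin spot is the general framework of Subsection~3.1: the hypotheses only give $m(\xi)\ge A_1|\xi|^{\mu}$ for $|\xi|\ge\xi_0$ and $m\ge 0$ elsewhere, so for a symbol such as $m(\xi)=\xi^4$ the form inequality $M\ge -c\,\partial_x^2$ fails near $\xi=0$ and the comparison argument does not apply; there one really does need the Birman--Schwinger route you mention, checking that $\psi^{\alpha/2}(M+\omega-\lambda)^{-1}\psi^{\alpha/2}$ remains compact (e.g.\ Hilbert--Schmidt, using the exponential decay of $\psi$) uniformly as $\lambda\uparrow\omega$. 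Since you flag this and the paper only ever invokes the lemma with $m(\xi)=\xi^4+\xi^2$, I would not count it as a gap, but if you intend the proof to cover the stated generality you should write out that uniform compactness estimate rather than leave it as an alternative.
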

\begin{proof}
See Proposition 1 in \cite{abh}.
\end{proof}

Next, our intention is to obtain suitable spectral properties for the linear operator $\mathcal{S}$ in $(\ref{opT})$. To this end, let us introduce the family of operators $\mathcal{P}_{\kappa}$, $\kappa\geq0$, defined on $L^2$ as
$
\mathcal{P}_{\kappa} g(\xi)=
\frac{1}{w_{\kappa}(\xi)}\int_{\R}K(\xi-y)g(y)dy,
$
where $K(\xi)=(\alpha+1)\widehat{\psi^{\alpha}}(\xi)$ and $w_{\kappa}(\xi)=m(\xi)+\kappa+\omega$.
The family of operators $\mathcal{P}_{\kappa}$ act over the  Hilbert space
$
X=\left\{g\in L^2;\,
\|g\|_{X,\kappa}=\left(\int_{\R}|g(\xi)|^2w_\kappa(\xi)d\xi\right)^{1/2}<\infty\right\}.
$

By \cite[Proposition 2.2]{al}, we see that $\mathcal{P}_{\kappa}$ is a compact, self-adjoint operator on
$X$ with respect to the norm $\|\cdot\|_{X,\kappa}$. Using the spectral theorem for compact and self-adjoint operators, we conclude that $\mathcal{P}_{\kappa}$ has a family of eigenfunctions
$\{\chi_{i,\kappa}\}_{i=0}^\infty$ forming  an orthonormal basis
of $X$. Moreover, the corresponding eigenvalues
$\{\lambda_i(\kappa)\}_{i=0}^\infty$ are real and can be numbered in
order of decreasing absolute value
$|\lambda_0(\kappa)|\geq|\lambda_1(\kappa)|\geq\ldots\geq0.$\\
\indent Let us recall some results of \cite{al} and \cite{albo}. The first
one is concerned with an equivalent formulation of the eigenvalue
problem associated with $\mathcal{S}$.

\begin{lemma}\label{multilema}
Suppose $\kappa\geq0$. Then $-\kappa$ is an eigenvalue of
$\mathcal{S}$ (as an operator on $L^2$) with eigenfunction $g$
if, and only if, $1$ is an eigenvalue of $\mathcal{P}_{\kappa}$ (as
an operator on $X$) with eigenfunction $\widehat{g}$. In particular,
both eigenvalues have the same multiplicity.
\end{lemma}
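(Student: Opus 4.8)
**Proof proposal for Lemma 2.7 (the equivalence $-\kappa \in \sigma_p(\mathcal{S}) \iff 1 \in \sigma_p(\mathcal{P}_\kappa)$).**

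The plan is to translate the eigenvalue equation for $\mathcal{S}$ into Fourier variables and recognize the resulting identity as exactly the statement that $\widehat{g}$ is a fixed point of $\mathcal{P}_\kappa$. First I would start from $\mathcal{S}g = -\kappa g$, which by the definition $(\ref{opT})$ reads $Mg + \omega g - (\alpha+1)\psi^\alpha g = -\kappa g$, i.e. $(M + \omega + \kappa)g = (\alpha+1)\psi^\alpha g$. Taking the Fourier transform of both sides and using that $M$ acts as multiplication by $m(\xi)$ and that a product in physical space becomes a convolution in Fourier space, the left-hand side becomes $(m(\xi) + \omega + \kappa)\widehat{g}(\xi) = w_\kappa(\xi)\widehat{g}(\xi)$, while the right-hand side becomes $(\alpha+1)\big(\widehat{\psi^\alpha} * \widehat{g}\big)(\xi) = \int_{\mathbb R} K(\xi - y)\widehat{g}(y)\,dy$ with $K = (\alpha+1)\widehat{\psi^\alpha}$. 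Dividing through by $w_\kappa(\xi)$ (which is legitimate since $w_\kappa(\xi) = m(\xi) + \kappa + \omega \geq \omega > 0$ by assumption (ii) on $m$ and $\kappa \geq 0$) yields precisely $\widehat{g}(\xi) = \frac{1}{w_\kappa(\xi)}\int_{\mathbb R} K(\xi - y)\widehat{g}(y)\,dy = \mathcal{P}_\kappa\widehat{g}(\xi)$, i.e. $\mathcal{P}_\kappa \widehat{g} = \widehat{g}$. Each of these manipulations is reversible, so the converse implication follows by reading the chain backwards.

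The two remaining points to address are the mapping-space bookkeeping and the multiplicity claim. For the former, I would check that $g$ lies in the domain of $\mathcal{S}$ in $L^2$ if and only if $\widehat{g}$ lies in $X$: indeed $g \in D(\mathcal{S})$ requires $Mg \in L^2$, which by Plancherel means $m(\xi)\widehat{g}(\xi) \in L^2$, and combined with $\widehat{g}\in L^2$ this is equivalent to $\int_{\mathbb R}|\widehat g(\xi)|^2(m(\xi)+\kappa+\omega)\,d\xi < \infty$, i.e. $\widehat g \in X$; the growth bound (i) on $m$ guarantees this is the natural form domain. For the multiplicity statement, the Fourier transform $\mathcal{F}$ restricts to a bijective linear isometry (up to normalization) from $\ker(\mathcal{S} + \kappa) \subset L^2$ onto $\ker(\mathcal{P}_\kappa - I) \subset X$, by the equivalence just established applied linearly; since a linear isomorphism preserves dimension, the geometric multiplicities coincide, and as both operators are self-adjoint on their respective Hilbert spaces geometric and algebraic multiplicity agree.

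I do not anticipate a serious obstacle here — the lemma is essentially a change of variables. The only mild subtlety worth stating carefully is the justification that the Fourier transform genuinely intertwines the two operators at the level of \emph{unbounded} operators, i.e. that it sends $D(\mathcal{S})$ onto $X$ (the natural domain for the compact operator $\mathcal{P}_\kappa$ to have the fixed-point equation make sense), which is why I would spell out the domain computation above rather than treat it as automatic. Everything else reduces to the elementary facts that $\widehat{fg} = \widehat f * \widehat g$ (with the appropriate $2\pi$ normalization) and that division by the strictly positive weight $w_\kappa$ is an invertible operation. One may also invoke \cite[Proposition 2.2]{al}, already cited in the excerpt, which records that $\mathcal{P}_\kappa$ is compact and self-adjoint on $X$, so the fixed-point equation $\mathcal{P}_\kappa\widehat g = \widehat g$ is indeed an honest eigenvalue problem with $1$ an admissible eigenvalue.
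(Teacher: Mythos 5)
Your argument is correct and is essentially the standard proof behind the result the paper cites (the paper itself only refers to Corollary 2.3 of \cite{al} without reproducing the argument): Fourier-transform the eigenvalue equation $(M+\omega+\kappa)g=(\alpha+1)\psi^{\alpha}g$, turn the product into the convolution against $K=(\alpha+1)\widehat{\psi^{\alpha}}$, and divide by the strictly positive weight $w_{\kappa}$ to obtain the fixed-point equation $\mathcal{P}_{\kappa}\widehat{g}=\widehat{g}$; reversibility plus the linear bijection between kernels gives the converse and the equality of multiplicities. The one imprecision worth correcting is your domain bookkeeping: $m\widehat{g}\in L^2$ together with $\widehat{g}\in L^2$ \emph{implies} $\int|\widehat{g}|^2 w_{\kappa}\,d\xi<\infty$ (Cauchy--Schwarz), but the converse fails in general, since $X$ is the form domain of $\mathcal{S}$, not its operator domain. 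This does not break the lemma, because the fixed-point equation itself supplies the missing regularity: if $\widehat{g}\in X$ satisfies $\mathcal{P}_{\kappa}\widehat{g}=\widehat{g}$, then $m\widehat{g}=K\ast\widehat{g}-(\kappa+\omega)\widehat{g}\in L^2$ (using that $K\in L^1\cap L^\infty$ thanks to the decay of $\psi$), so $g$ automatically lies in $D(\mathcal{S})$. With that bootstrap made explicit, the proof is complete and coincides with the cited one.
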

\begin{proof}
See Corollary 2.3 in \cite{al}.
\end{proof}

Since $\mathcal{P}_{\kappa}$ is a compact operator, we have a Krein-Rutman theorem given by the next result.

\begin{lemma}\label{simplelema}
The eigenvalue $\lambda_0(0)$ of $\mathcal{P}_{0}$ is positive,
simple, and has a strictly positive eigenfunction $\chi_{0,0}$.
Moreover, $\lambda_0(0)>|\lambda_1(0)|$.
\end{lemma}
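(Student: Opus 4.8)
The plan is to recognize $\mathcal{P}_0$ as a positivity-improving compact self-adjoint operator and to invoke the Krein--Rutman / Perron--Frobenius principle (in the form of Jentzsch's theorem for integral operators with a strictly positive kernel). Conjugating $\mathcal{P}_0$ by the unitary map $g\mapsto\sqrt{w_0}\,g$ from $X$ onto $L^2$ turns it into the integral operator with symmetric kernel $\widetilde K(\xi,y)=K(\xi-y)/\sqrt{w_0(\xi)w_0(y)}$, where $K(\xi)=(\alpha+1)\widehat{\psi^\alpha}(\xi)$ and $w_0(\xi)=m(\xi)+\omega>0$. The single genuinely nontrivial input is that $K(\xi)>0$ for every $\xi\in\mathbb{R}$: this is exactly where the positivity of the Fourier transform of $\psi$ enters (for integer $\alpha$ via the iterated convolution $\widehat{\psi^\alpha}=\widehat\psi\ast\cdots\ast\widehat\psi$ once $\widehat\psi>0$ is known; in general via the decay and single-lobe structure of $\psi$ built into the total-positivity setup of \cite{al,albo,ka}), and I take this positivity as given. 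Granting it, $\widetilde K$ is strictly positive, hence $\mathcal{P}_0$ is positivity improving: if $g\ge0$ and $g\not\equiv0$, then $\mathcal{P}_0 g>0$ everywhere.

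First I would show that $\lambda_0(0)=\|\mathcal{P}_0\|_{X\to X}>0$ and that it is an eigenvalue with a nonnegative eigenfunction. Writing the quadratic form as $\langle\mathcal{P}_0 g,g\rangle_X=\int_{\mathbb{R}}\!\int_{\mathbb{R}} K(\xi-y)g(y)\overline{g(\xi)}\,dy\,d\xi$ and combining the triangle inequality with $K>0$ gives $|\langle\mathcal{P}_0 g,g\rangle_X|\le\langle\mathcal{P}_0|g|,|g|\rangle_X$, whence
\[
\|\mathcal{P}_0\|=\sup_{\|g\|_X=1}|\langle\mathcal{P}_0 g,g\rangle_X|\le\sup_{\|g\|_X=1}\langle\mathcal{P}_0|g|,|g|\rangle_X\le\sup_{\|g\|_X=1}\langle\mathcal{P}_0 g,g\rangle_X\le\|\mathcal{P}_0\|.
\]
Thus the largest eigenvalue equals $\|\mathcal{P}_0\|$, it is strictly positive (test against any fixed nonnegative function), it therefore coincides with $\lambda_0(0)$, and a maximizer $\chi_{0,0}$ can be chosen with $\chi_{0,0}\ge0$. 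Applying $\mathcal{P}_0$ once more and using that it is positivity improving yields $\chi_{0,0}=\lambda_0(0)^{-1}\mathcal{P}_0\chi_{0,0}>0$ everywhere.

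For simplicity, the eigenspace of $\lambda_0(0)$ is conjugation-invariant, hence spanned by real functions; if it were at least two-dimensional I would take a real eigenfunction $\rho$ with $\langle\rho,\chi_{0,0}\rangle_X=0$, observe that $|\rho|$ is again a maximizer, hence an eigenfunction, hence strictly positive by the previous step, so that $\rho$ has a fixed sign, contradicting orthogonality to $\chi_{0,0}>0$ against the positive weight $w_0$. For the strict gap $\lambda_0(0)>|\lambda_1(0)|$, since $\lambda_0(0)=\|\mathcal{P}_0\|$ it suffices to rule out $-\lambda_0(0)$ being an eigenvalue; an eigenfunction $\rho$ for $-\lambda_0(0)$ would give $\langle\mathcal{P}_0\rho,\rho\rangle_X=-\lambda_0(0)\|\rho\|_X^2<0$ while $\langle\mathcal{P}_0|\rho|,|\rho|\rangle_X\ge\lambda_0(0)\|\rho\|_X^2$, forcing equality in the triangle inequality, i.e.\ $\rho(y)\rho(\xi)$ of constant sign for all $\xi,y$; that sign would have to be negative, which is impossible on the diagonal $\xi=y$. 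Hence $|\lambda_1(0)|<\lambda_0(0)$.

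The main obstacle is the one isolated at the outset: establishing $K=(\alpha+1)\widehat{\psi^\alpha}>0$ on all of $\mathbb{R}$. Once this strict positivity of the convolution kernel is in hand, the remaining steps are the routine compact-self-adjoint Perron--Frobenius argument sketched above.
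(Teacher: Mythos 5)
Your argument is correct in substance, but it is worth noting that the paper does not prove this lemma at all: it simply cites Lemma 8 of \cite{albo}, where the statement is established under the standing hypothesis that the kernel is positive. What you have written is essentially the standard Jentzsch/Krein--Rutman argument that underlies that citation: unitarily identifying $X$ with $L^2$ via $g\mapsto\sqrt{w_0}\,g$, using $K>0$ to get $|\langle\mathcal{P}_0 g,g\rangle_X|\le\langle\mathcal{P}_0|g|,|g|\rangle_X$, identifying $\lambda_0(0)$ with the operator norm, upgrading a nonnegative maximizer to a strictly positive eigenfunction by positivity improvement, and then handling simplicity and the spectral gap through the equality cases. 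Your isolation of $K=(\alpha+1)\widehat{\psi^{\alpha}}>0$ as the one substantive input is also faithful to how the lemma is used: in the paper this positivity is verified before the lemma is invoked (formulas (3.11)--(3.13) give $\widehat{\varphi}>0$ and $\widehat{\varphi^{\alpha}}>0$), and it is a hypothesis in \cite{albo}, so taking it as given is legitimate. Two small steps should be tightened. First, in the simplicity argument, strict positivity of $|\rho|$ alone does not force $\rho$ to have a fixed sign for a merely measurable $\rho$; either note that eigenfunctions of $\mathcal{P}_0$ are continuous (since $K$ is continuous and integrable and $w_0$ is continuous and bounded below), or argue that $(|\rho|\pm\rho)/2$ are both nonnegative eigenfunctions with pointwise product zero, so by positivity improvement one of them vanishes. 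Second, in ruling out $-\lambda_0(0)$, the set $\{\xi=y\}$ has planar measure zero, so the contradiction should be drawn on a product set: if $A=\{\rho>0\}$ (or $\{\rho<0\}$) had positive measure, then $\rho(y)\rho(\xi)>0$ on $A\times A$, contradicting $\rho(y)\rho(\xi)\le0$ a.e.; hence $\rho\equiv0$. With these repairs your proof is a complete, self-contained substitute for the reference the paper relies on.
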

\begin{proof}
See Lemma 8 in \cite{albo}.
\end{proof}

\begin{definition} A function $f:\R\to\R$ belongs to the class PF(2)
if:\\
{\rm (i)} $f(x)>0$ for all $x\in\R$,\\
{\rm (ii)} for any $x_1,x_2,y_1,y_2\in\R$ with $x_1<x_2$ and $y_1<y_2$, it follows that
$f(x_1-y_1)f(x_2-y_2)-f(x_1-y_2)f(x_2-y_1)\geq0,$\\
{\rm (iii)} strict inequality holds in {\rm (ii)} whenever the intervals $(x_1,x_2)$ and $(y_1,y_2)$ intersect.
\end{definition}
A sufficient condition for a smooth function to be in class $PF(2)$ is that it is logarithmically concave. More precisely, we have:

\begin{lemma}\label{logcon}
Let $f:\mathbb{R}\to\mathbb{R}$ be a positive and twice differentiable function. If $(\log f(x))''<0$ for all $x\neq0$, we have that $f$ belongs to the class PF(2).
\end{lemma}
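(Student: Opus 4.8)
The plan is to reduce conditions (ii) and (iii) in the definition of $PF(2)$ to the strict concavity of $g:=\log f$, condition (i) being exactly the positivity hypothesis. Since $f$ is positive and twice differentiable, $g$ is twice differentiable on $\mathbb{R}$ with $g''=(f''f-(f')^2)/f^{2}$, a continuous function; the hypothesis gives $g''(x)<0$ for $x\neq0$, and continuity forces $g''(0)\leq0$, so $g''\leq0$ everywhere and $g$ is concave on $\mathbb{R}$. I would then observe that $g$ is affine on no nondegenerate interval $I$: such an $I$ contains a point $x\neq0$ where $g''<0$, whereas an affine function has vanishing second derivative. A concave function that is affine on no interval is strictly concave, so $g$ is strictly concave on $\mathbb{R}$; this last "affine on no interval" step is the only point that needs a little care, precisely because the hypothesis is $g''<0$ off the origin rather than everywhere.

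Next, fix $x_1<x_2$ and $y_1<y_2$ and set $u_1=x_1-y_1$, $u_2=x_2-y_2$, $v_1=x_1-y_2$, $v_2=x_2-y_1$. Two elementary facts drive the argument: first, $u_1+u_2=v_1+v_2$; second, $v_1<u_1<v_2$ and $v_1<u_2<v_2$, where every strict inequality is immediate from $x_1<x_2$ and $y_1<y_2$ (in particular $v_1<v_2$). Hence there is a unique $t\in(0,1)$ with $u_1=tv_1+(1-t)v_2$, and then $u_2=(v_1+v_2)-u_1=(1-t)v_1+tv_2$. Applying strict concavity of $g$ to these two convex combinations and adding yields $g(u_1)+g(u_2)>g(v_1)+g(v_2)$, that is,
$f(x_1-y_1)\,f(x_2-y_2)>f(x_1-y_2)\,f(x_2-y_1)$.
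This establishes condition (ii) (indeed with strict inequality, which is more than required) and a fortiori condition (iii).

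If one prefers to match the structure of the definition literally, condition (iii) can also be isolated by noting that $(x_1,x_2)\cap(y_1,y_2)\neq\emptyset$ is equivalent to $x_1<y_2$ and $y_1<x_2$, hence $v_1=x_1-y_2<0<x_2-y_1=v_2$, so again $v_1<v_2$ and the weight $t$ lies in the open interval $(0,1)$, which is exactly what makes the concavity estimate strict. I do not anticipate any genuine obstacle beyond the strict-concavity bookkeeping noted in the first paragraph; the rest is the routine translation of the $2\times2$ determinant positivity condition into a statement about convex combinations and the concavity of $\log f$.
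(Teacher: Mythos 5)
Your argument is correct, and it supplies a complete, self-contained proof where the paper gives none: the paper simply cites Lemma 4.3 of Albert and Lemma 10 of Albert--Bona, and the reduction you carry out (translate the $2\times 2$ determinant condition into $g(u_1)+g(u_2)\ge g(v_1)+g(v_2)$ with $u_1+u_2=v_1+v_2$ and $v_1<u_1,u_2<v_2$, then invoke strict concavity of $g=\log f$) is exactly the standard mechanism behind those references. The combinatorial part is airtight: all four strict inequalities $v_1<u_1<v_2$, $v_1<u_2<v_2$ do follow from $x_1<x_2$, $y_1<y_2$, the weight $t$ lies in $(0,1)$, and you in fact obtain strict positivity of the minor for \emph{all} admissible $x_i,y_j$, which subsumes both (ii) and (iii). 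The only place I would tighten the write-up is the strict-concavity step: you assert that $g''$ is continuous, which is not guaranteed by ``twice differentiable'' alone. This is harmless because a cleaner route is available that avoids $g''(0)$ entirely: by the mean value theorem applied to $g'$, the hypothesis $g''<0$ off the origin makes $g'$ strictly decreasing on $(-\infty,0)$ and on $(0,\infty)$, and since $g'$ is continuous at $0$ (being differentiable there), $g'$ is strictly decreasing on all of $\mathbb{R}$; a differentiable function with strictly decreasing derivative is strictly concave, which is all you need. With that substitution your ``affine on no interval'' detour also becomes unnecessary.
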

\begin{proof}
See Lemma 4.3 in \cite{al} or Lemma 10 in \cite{albo}.
\end{proof}

\begin{lemma}\label{logconvolution}
Let $f_i:\mathbb{R}\to\mathbb{R}$ be positive and twice differentiable functions, where $i=1,2,\cdots, n$. If $f_i$ is a logarithmically concave function for each $i$, then the finite convolution $f_1\ast f_2\ast\cdots \ast f_n$ is also a logarithmically concave function. 
\end{lemma}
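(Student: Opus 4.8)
The plan is to reduce to the case $n=2$ by induction and then invoke the Pr\'ekopa--Leindler inequality. For $n=1$ there is nothing to prove. Assuming the claim for $n-1$ factors, write
$f_1\ast f_2\ast\cdots\ast f_n=(f_1\ast\cdots\ast f_{n-1})\ast f_n$; the first factor is positive (a convolution of positive integrable functions), logarithmically concave by the inductive hypothesis, and remains twice differentiable since convolution with a twice differentiable, suitably decaying function preserves that regularity. So everything comes down to showing: the convolution of two positive, twice differentiable, log-concave functions is again log-concave.

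For $n=2$ set $f=f_1$, $g=f_2$, $h=f\ast g$. Recall that log-concavity of a positive function $f$ is equivalent to $f(\lambda a+(1-\lambda)b)\ge f(a)^{\lambda}f(b)^{1-\lambda}$ for all $a,b\in\mathbb{R}$ and $\lambda\in[0,1]$. Fix $x_0,x_1\in\mathbb{R}$ and $\lambda\in[0,1]$, put $x_\lambda=\lambda x_0+(1-\lambda)x_1$, and define $\phi(y)=f(x_0-y)g(y)$, $\psi(y)=f(x_1-y)g(y)$, $w(y)=f(x_\lambda-y)g(y)$. For $y_0,y_1\in\mathbb{R}$ and $y_\lambda=\lambda y_0+(1-\lambda)y_1$ one has $x_\lambda-y_\lambda=\lambda(x_0-y_0)+(1-\lambda)(x_1-y_1)$, hence the log-concavity of $f$ and of $g$ gives
\[
w(y_\lambda)=f\bigl(\lambda(x_0-y_0)+(1-\lambda)(x_1-y_1)\bigr)\,g\bigl(\lambda y_0+(1-\lambda)y_1\bigr)\ge \phi(y_0)^{\lambda}\,\psi(y_1)^{1-\lambda}.
\]
The Pr\'ekopa--Leindler inequality then yields $\int_{\mathbb{R}}w\ge\bigl(\int_{\mathbb{R}}\phi\bigr)^{\lambda}\bigl(\int_{\mathbb{R}}\psi\bigr)^{1-\lambda}$, i.e. $h(x_\lambda)\ge h(x_0)^{\lambda}h(x_1)^{1-\lambda}$, so $h$ is log-concave and the induction closes. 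An equivalent route is to observe that $F(x,y):=f_1(y)\,f_2(x-y)$ is log-concave on $\mathbb{R}^2$ — a product of functions obtained by precomposing log-concave functions with the affine maps $(x,y)\mapsto y$ and $(x,y)\mapsto x-y$ — and then to apply Pr\'ekopa's theorem that the marginal $x\mapsto\int_{\mathbb{R}}F(x,y)\,dy$ of a log-concave function is log-concave.

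I expect the only genuine difficulty to be the Pr\'ekopa--Leindler inequality itself, which carries all the analytic weight; the reduction to two factors, the affine invariance of log-concavity, and the elementary fact that a convolution of rapidly decaying twice differentiable functions is twice differentiable are all routine. For later use (Lemma \ref{logcon} and the $PF(2)$ machinery) it is worth noting that the same argument shows $\log(f_1\ast\cdots\ast f_n)$ is concave on all of $\mathbb{R}$, and that when at least one factor is strictly log-concave the concavity is strict on the relevant set, which is what is actually needed to feed the convolution into Lemma \ref{logcon}.
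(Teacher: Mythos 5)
Your argument is correct and is essentially the proof the paper delegates to its citation: the paper establishes this lemma only by pointing to Section 3 of Brascamp--Lieb \cite{bl}, where exactly this Pr\'ekopa--Leindler (equivalently, Pr\'ekopa marginal) argument is carried out. The only loose end, harmless for the intended application to rapidly decaying $\sech$-type profiles, is that you implicitly assume enough integrability for the convolutions to be finite and twice differentiable, which the hypotheses as literally stated do not guarantee.
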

\begin{proof}
See \cite[Section 3]{bl}.
\end{proof}
The main theorem in \cite{al} reads as follows.

\begin{theorem}\label{mainal}
Suppose $\widehat{\psi}>0$ on $\R$ and $K=(\alpha+1)\widehat{\psi^{\alpha}}\in
PF(2)$. Then $\mathcal{S}$ satisfies the following conditions:\\
{\rm (P1)} $\mathcal{S}$ has a simple, negative eigenvalue $\kappa$,\\
{\rm (P2)} $\mathcal{S}$ has no negative eigenvalue other than $\kappa$,\\
{\rm (P3)} the eigenvalue $0$ of $\mathcal{S}$ is simple.
\end{theorem}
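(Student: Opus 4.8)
The plan is to deduce Theorem~\ref{mainal} from the sequence of lemmas developed above, translating spectral information about $\mathcal{S}$ into statements about the operators $\mathcal{P}_{\kappa}$ via Lemma~\ref{multilema}. The starting observation is that, by Lemma~\ref{gelema}, $\mathcal{S}$ has only finitely many eigenvalues below $\omega$, each of finite multiplicity, and $0$ is always an eigenvalue (with eigenfunction $\psi'$); so the negative spectrum of $\mathcal{S}$ is a finite list $\kappa_0\le\kappa_1\le\cdots$ of negative numbers, and it suffices to count them and to show $0$ is simple. The key dictionary is: $-\kappa$ is an eigenvalue of $\mathcal{S}$ of multiplicity $p$ if and only if $1$ is an eigenvalue of $\mathcal{P}_{\kappa}$ of multiplicity $p$ (Lemma~\ref{multilema}).

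The main analytic input is that $K=(\alpha+1)\widehat{\psi^{\alpha}}\in PF(2)$ forces the integral kernel of $\mathcal{P}_{\kappa}$ to be totally positive of order $2$, and hence — by the classical oscillation theory of such kernels (the Gantmacher--Krein / Karlin theory cited through \cite{al}) — the eigenfunctions $\chi_{i,\kappa}$ have a Sturm-type structure: the $i$-th eigenfunction changes sign exactly $i$ times, and the eigenvalues $\lambda_i(\kappa)$ are all simple and satisfy the strict ordering $\lambda_0(\kappa)>|\lambda_1(\kappa)|>|\lambda_2(\kappa)|>\cdots$. In particular the top eigenvalue $\lambda_0(\kappa)$ is positive with a strictly positive eigenfunction (this is Lemma~\ref{simplelema} for $\kappa=0$, and the same Krein--Rutman argument applies for every $\kappa\ge0$ since $w_\kappa>0$ and the numerator kernel $K$ is unchanged). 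First I would record that, since $\psi'$ has exactly one sign change (because $\psi$ is an even positive single-lobe solution of \eqref{solitary} decaying at infinity), the relation $\mathcal{S}\psi'=0$ together with Lemma~\ref{multilema} says that $1$ is an eigenvalue of $\mathcal{P}_0$ whose eigenfunction $\widehat{\psi'}$ changes sign exactly once; by the oscillation structure this eigenvalue must be $\lambda_1(0)$, and it is simple. This gives (P3): $0$ is a simple eigenvalue of $\mathcal{S}$.

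Next I would count the negative eigenvalues. Because $\lambda_1(0)=1$ and $\lambda_0(0)>\lambda_1(0)=1$, while $\lambda_i(0)<1$ for all $i\ge2$ (strict ordering of absolute values, combined with the fact that the only eigenvalue equal to $1$ is $\lambda_1(0)$), one analyzes how the eigenvalues $\lambda_i(\kappa)$ vary with $\kappa$. The point is that $w_\kappa(\xi)=m(\xi)+\kappa+\omega$ is strictly increasing in $\kappa$, so each quadratic form $g\mapsto\frac{1}{w_\kappa}\int K(\cdot-y)g(y)\,dy$ decreases in $\kappa$, whence each $\lambda_i(\kappa)$ is a continuous, strictly decreasing function of $\kappa$ with $\lambda_i(\kappa)\to0$ as $\kappa\to\infty$. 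Therefore $\lambda_0(\kappa)=1$ has exactly one solution $\kappa=\kappa_*>0$ (since $\lambda_0(0)>1$ and $\lambda_0$ decreases to $0$), and this value is the unique $\kappa$ for which $1\in\mathrm{spec}(\mathcal{P}_\kappa)$ through the top eigenvalue; by Lemma~\ref{multilema} it produces a simple negative eigenvalue $-\kappa_*$ of $\mathcal{S}$, whose eigenfunction corresponds to the positive eigenfunction $\chi_{0,\kappa_*}$ and hence does not change sign. For $i\ge1$, $\lambda_i(0)\le\lambda_1(0)=1$, so $\lambda_i(\kappa)<1$ for all $\kappa>0$ and $\lambda_i(\kappa)\le1$ only at $\kappa=0$ when $i=1$; thus no further crossings of the value $1$ occur for $\kappa>0$. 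Consequently the only negative eigenvalue of $\mathcal{S}$ is the simple eigenvalue $-\kappa_*$, giving (P1) and (P2), and we relabel $\kappa:=-\kappa_*$ to match the statement.

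I expect the main obstacle to be the rigorous invocation of the $PF(2)$/oscillation machinery: one must verify that the kernel $K(\xi-y)/w_\kappa(\xi)$ inherits the strict total positivity of order $2$ needed to conclude that eigenfunctions of $\mathcal{P}_\kappa$ have the exact nodal count $i$ and that the eigenvalues are simple with strictly decreasing absolute values — in particular the separation $\lambda_0(\kappa)>|\lambda_1(\kappa)|$ for all $\kappa$, not just $\kappa=0$. This is precisely where Theorem~\ref{mainal} (the main result of \cite{al}) is proved in the source, so for our purposes I would simply cite \cite[Theorem~3.2 and its proof]{al}, noting that the hypotheses $\widehat{\psi}>0$ and $K\in PF(2)$ are exactly what is assumed here, and that Lemmas~\ref{multilema}--\ref{logconvolution} above furnish the remaining ingredients (compactness and self-adjointness of $\mathcal{P}_\kappa$, the Krein--Rutman statement, and the sufficient condition for membership in $PF(2)$). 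The monotonicity-in-$\kappa$ argument for the crossing count is elementary once the per-$\kappa$ spectral picture is in hand.
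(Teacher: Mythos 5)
The paper's own ``proof'' of this statement is a one-line citation to \cite[Theorem 3.2]{al}, and your proposal ultimately rests on the same citation, so the approach is essentially identical; your additional reconstruction of the internal logic (the $\mathcal{S}\leftrightarrow\mathcal{P}_\kappa$ dictionary via Lemma~\ref{multilema}, the oscillation structure forced by $K\in PF(2)$, and the monotone crossing of the value $1$ by $\lambda_0(\kappa)$) is a faithful outline of Albert's argument. One small imprecision: the sign-change count that identifies $1$ with $\lambda_1(0)$ is performed on the Fourier side --- the eigenfunction is $\widehat{\psi'}(\xi)=i\xi\,\widehat{\psi}(\xi)$, which changes sign exactly once precisely because of the hypothesis $\widehat{\psi}>0$ --- not because $\psi'$ changes sign once in physical space; this is in fact the only place where the hypothesis $\widehat{\psi}>0$ enters, so it is worth stating correctly.
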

\begin{proof}
See Theorem 3.2 in \cite{al}.
\end{proof}

\subsection{Spectral analysis for the operator $\mathcal{L}$} In this subsection, we use the total positivity theory determined in previous subsection to get the spectral properties for the general operator $(\ref{operator})$. In fact, as before, since $\mathcal{L}$ is a diagonal operator, its eigenvalues are given by the eigenvalues of the operators $\mathcal{L}_{-}$ and $\mathcal{L}_{+}$ in $(\ref{operator})$.

\subsubsection{The spectrum of $\mathcal{L}_{-}$}

The operator $\mathcal{L}_{-}$ at the point $\omega=\omega_0$ then reads as
\begin{equation}\label{L1m}
\mathcal{L}_{-}= \partial_x^4-\partial_x^2+\omega_0-(\alpha+1)\varphi^{\alpha},
\end{equation}
where $\varphi=\phi$ is the solitary wave solution with hyperbolic secant profile given by $(\ref{expsolode})$ which is also a minimizer of the problem $(\ref{min-problem})$.

We now can prove the following result.

\begin{proposition}\label{specL1}
The operator $\Lum$ in \eqref{L1m} defined on $L^2$ with domain $H^4$
has a unique negative eigenvalue, which is simple with positive associated eigenfunction. The eigenvalue zero is
simple with associated eigenfunction $\varphi'$. Moreover the rest of the
spectrum is bounded away from zero and the essential spectrum is the interval
$[\omega_0,\infty)$.
\end{proposition}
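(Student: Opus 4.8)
The plan is to verify the hypotheses of Theorem~\ref{mainal} for the operator $\mathcal{S}=\mathcal{L}_{-}$ in \eqref{L1m}, identifying $M=\partial_x^4-\partial_x^2$, $\omega=\omega_0$, and $\psi=\varphi$, the explicit solitary wave \eqref{expsolode}. First I would check that the symbol $m(\xi)=\xi^4+\xi^2$ satisfies conditions (i)--(ii) of the total positivity framework: it is even, locally bounded, nonnegative, and satisfies $A_1|\xi|^{\mu}\le m(\xi)\le A_2|\xi|^{\mu}$ with $\mu=4\ge1$ for $|\xi|\ge\xi_0$; together with Lemma~\ref{gelema} this already gives that the essential spectrum is $[\omega_0,\infty)$, that there are only finitely many eigenvalues below $\omega_0$, each of finite multiplicity, and that $0$ is an eigenvalue with eigenfunction $\varphi'$ (which follows by differentiating \eqref{edo}). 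Next I would verify that $\varphi$ solves the normalized equation $(M+\omega_0)\varphi=\varphi^{\alpha+1}$, which is exactly \eqref{edo} for the explicit profile, so the abstract setup applies verbatim.

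The two genuinely substantive hypotheses of Theorem~\ref{mainal} are $\widehat{\varphi}>0$ on $\mathbb{R}$ and $K=(\alpha+1)\widehat{\varphi^{\alpha}}\in PF(2)$. For the positivity of $\widehat{\varphi}$, since $\varphi(x)=a_0\sech^{4/\alpha}(b_0 x)$, I would invoke the classical fact that $\sech^{\beta}$ has a strictly positive Fourier transform for every $\beta>0$ (its transform is expressible through the Beta function / Gamma functions of complex argument, and is manifestly positive); alternatively, $\log\varphi$ is concave, hence $\varphi\in PF(2)$, and a $PF(2)$ function that is even and integrable has positive Fourier transform. For the $PF(2)$ condition on $K$, I would use Lemma~\ref{logcon}: it suffices to show $\widehat{\varphi^{\alpha}}>0$ and $\big(\log\widehat{\varphi^{\alpha}}\big)''<0$ away from the origin. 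Here $\varphi^{\alpha}=a_0^{\alpha}\sech^{4}(b_0 x)$, a fixed power independent of $\alpha$, so $\widehat{\varphi^{\alpha}}(\xi)$ is a concrete function — up to constants, the Fourier transform of $\sech^4$, which one can compute explicitly as a combination of $\xi\,\mathrm{csch}$-type terms, or handle via Lemma~\ref{logconvolution} by writing $\sech^4 = \sech^2\cdot\sech^2$ so that $\widehat{\sech^4}$ is (a constant times) the convolution $\widehat{\sech^2}\ast\widehat{\sech^2}$, and $\widehat{\sech^2}$ is a positive logarithmically concave function (indeed $\widehat{\sech^2}(\xi)=c\,\xi/\sinh(c'\xi)$, whose logarithm is concave). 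Lemma~\ref{logconvolution} then yields that $\widehat{\sech^4}$ is logarithmically concave, and Lemma~\ref{logcon} places it in $PF(2)$.

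With $\widehat{\varphi}>0$ and $K\in PF(2)$ established, Theorem~\ref{mainal} gives conditions (P1)--(P3): $\mathcal{L}_{-}$ has exactly one negative eigenvalue, it is simple, and the eigenvalue $0$ is simple. To complete the statement I would add that the negative eigenfunction may be taken positive: this follows from Lemma~\ref{simplelema} (the Krein--Rutman step), since the lowest eigenvalue of $\mathcal{P}_0$ is simple with strictly positive eigenfunction $\chi_{0,0}$, and by Lemma~\ref{multilema} this corresponds to the negative eigenvalue of $\mathcal{L}_{-}$ with eigenfunction whose Fourier transform is positive — hence, being the ground state of a Schrödinger-type operator, one can equivalently appeal to the fact that the bottom of the spectrum of a self-adjoint operator bounded below, when it is an isolated simple eigenvalue, has a nonvanishing eigenfunction that does not change sign. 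Finally, the assertion that the rest of the spectrum is bounded away from zero is immediate from Lemma~\ref{gelema} (finiteness and discreteness of the sub-$\omega_0$ eigenvalues) together with (P2)--(P3). The main obstacle I anticipate is the $PF(2)$ verification for $K$: one must be careful that it is $\varphi^{\alpha}=a_0^{\alpha}\sech^4(b_0x)$ — not $\varphi$ itself — whose Fourier transform must lie in $PF(2)$, and the cleanest route is the convolution argument via Lemma~\ref{logconvolution} combined with the log-concavity of $\widehat{\sech^2}$, rather than a direct second-derivative estimate on an explicit but unwieldy closed form.
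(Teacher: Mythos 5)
Your proposal is correct and follows essentially the same route as the paper: it verifies the hypotheses of Theorem~\ref{mainal} for $m(\xi)=\xi^4+\xi^2$, gets $\widehat{\varphi}>0$ from the explicit Gamma-function formula for $\widehat{\sech^{\,\beta}}$, and establishes $\widehat{\varphi^{\alpha}}=a_0^{\alpha}\,\widehat{\sech^4(b_0\cdot)}\in PF(2)$ by Lemmas~\ref{logcon} and~\ref{logconvolution} (you convolve the log-concave $\widehat{\sech^2}$ with itself, the paper convolves $\widehat{\sech}$ four times --- an immaterial difference). The only caution is that your two optional fallbacks (that an even integrable $PF(2)$ function automatically has positive Fourier transform, and that ground-state positivity follows as for second-order Schr\"odinger operators) are not reliable for this fourth-order setting and should not replace the explicit computations and the Krein--Rutman step of Lemmas~\ref{simplelema}--\ref{multilema}.
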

\begin{proof}
 By \eqref{edo} we see that $\varphi'$ is an eigenfunction of  $\mathcal{L}_{-}$ associated with the eigenvalue zero. Also, by Lemma \ref{gelema}, the essential spectrum is exactly $[\omega_0,\infty)$. We need to obtain properties {\rm (P1)}-{\rm (P3)}. In fact, first of all, we see that $\mathcal{L}_{-}$ is an operator of the form \eqref{opT} with $m(\xi)=\xi^4+\xi^2$ and $\psi=\varphi$. Thus, it is easy to see that
$
|\xi|^4\leq m(\xi)\leq 2|\xi|^4$ for all $|\xi|\geq1.
$
Hence, the operator $\Lum$ satisfies the assumptions in Subsection
\ref{reviewsec} with $A_1=1$, $A_2=2$, $\mu=4$, and $\xi_0=1$. In view
of Lemma \ref{gelema} and Theorem \ref{mainal} it suffices to prove that
$\widehat{\varphi}>0$ and $\widehat{\varphi^{\alpha}}\in PF(2)$. In fact, by \cite[Page 33, Formula 7.5]{fritz}, we first obtain  
\begin{equation}\label{fouriersol}\begin{array}{lllll}
\widehat{\varphi}(\xi)&=&\displaystyle a_0(\sech^{\frac{4}{\alpha}}(b_0\cdot))^\wedge(\xi)=2^{\frac{4}{\alpha}-2}\frac{a_0}{b_0}\Gamma\left(\frac{4}{\alpha}\right)^{-1}\Gamma\left(\frac{2}{\alpha}+i\frac{\xi}{2b_0}\right)\Gamma\left(\frac{2}{\alpha}-i\frac{\xi}{2b_0}\right)\\\\
&=&\displaystyle 2^{\frac{4}{\alpha}-2}\frac{a_0}{b_0}\Gamma\left(\frac{4}{\alpha}\right)^{-1}\left|\Gamma\left(\frac{2}{\alpha}+i\frac{\xi}{2b_0}\right)\right|^2,
\end{array}
\end{equation}
where $\Gamma$ denotes the usual Gamma function defined by $\Gamma(z)=\int_0^{\infty}t^{z-1}e^{-t}dt$ for ${\rm Re}(z)>0$. In addition, in the last equality in $(\ref{fouriersol})$ we have used the property $\Gamma(z)\Gamma(\bar{z})=\Gamma(z)\overline{\Gamma(z)}=|\Gamma(z)|^2$. Formula $(\ref{fouriersol})$ gives us automatically that $\widehat{\varphi}>0$ for all $\alpha>0$.\\ 
 \indent On the other hand, since $\varphi^{\alpha}(x)=a_0^{\alpha}\sech^{4}(b_0x)$, we obtain by using a similar argument as determined in $(\ref{fouriersol})$ 
 \begin{equation}\label{fouriesol1}\begin{array}{llll}\displaystyle \widehat{\varphi^{\alpha}}(\xi)&=&\displaystyle a_0^{\alpha}(\sech^{4}(b_0\cdot))^{\wedge}(\xi)=6\frac{a_0^{\alpha}}{b_0}\Gamma(4)^{-1}\Gamma\left(2+i\frac{\xi}{2b_0}\right)\Gamma\left(2-i\frac{\xi}{2b_0}\right)\\\\
 &=&\displaystyle \frac{2a_0^{\alpha}}{3b_0}\Gamma\left(2\left(1+i\frac{\sqrt{\alpha^2+4\alpha+8}}{2\alpha}\xi\right)\right)\Gamma\left(2\left(1-i\frac{\sqrt{\alpha^2+4\alpha+8}}{2\alpha}\xi\right)\right).
 \end{array}\end{equation}
We can obtain a convenient expression for $(\ref{fouriesol1})$. To do so, we need to use some properties concerning Gamma function such as $\Gamma(z+1)=z\Gamma(z)$, as well as, non-standard expressions for $\Gamma(2z)$, $\Gamma(1+iy)\Gamma(1-iy)$, and $\Gamma\left(\frac{1}{2}+iy\right)\Gamma\left(\frac{1}{2}-iy\right)$ as found in \cite[Page 236, Formulas 6.1.18, 6.1.30 and 6.1.31]{abrasteg}. Thus, from $(\ref{fouriesol1})$, we obtain the final and simplified expression
\begin{equation}\label{fouriesol2} \widehat{\varphi^{\alpha}}(\xi)=\frac{a_0^{\alpha}}{3b_0^2}\left(\frac{1}{4}+\frac{\alpha^2+4\alpha+8}{4\alpha^2}\xi^2\right)\frac{\pi}{\cosh\left(\frac{\pi}{4b_0}\xi\right)}\frac{\xi}{\sinh\left(\frac{\pi}{4b_0}\xi\right)}.
\end{equation}
 The expression $(\ref{fouriesol2})$ gives us that $\widehat{\varphi^{\alpha}}>0$. In addition, let us consider the function \mbox{$\beta(x)=\sech(b_0x)$.} Using the arguments in \cite[Page 33, Formula 7.1]{fritz}, we deduce  $\widehat{\beta}(\xi)=\frac{\pi}{2b_0}\sech\left(\frac{\pi\xi}{2b_0}\right)$ and $\widehat{\beta}>0$ belongs to the class $PF(2)$, since
 $\frac{d^2}{d\xi^2}\log(\widehat{\beta}(\xi))=-\frac{\pi^2}{4b_0^2}\sech^2\left(\frac{\pi\xi}{2b_0}\right)<0.$
Thus, from Lemma $\ref{logconvolution}$, we have that $ \widehat{\varphi^{\alpha}}$ belongs to the class $PF(2)$,
that is, $K=(\alpha+1)\widehat{\varphi^{\alpha}}\in PF(2)$ as desired. 
\end{proof}

\begin{remark}
It is important to highlight that the function $\varphi$ in $(\ref{expsolode})$ represents a minimizer of the problem in $(\ref{minimizationproblem})$. Therefore, one can obtain property {\rm (P1)} in Proposition $\ref{specL1}$ by applying Proposition $\ref{propunique}$.
\end{remark}

\subsubsection{The spectrum of $\mathcal{L}_{+}$} By Proposition $\ref{specL1}$, we have determined that ${\rm n}(\mathcal{L}_{-})=1$. However, we need to calculate both ${\rm n}(\mathcal{L}_{+})$ and ${\rm z}(\mathcal{L}_{+})$ in order to conclude that ${\rm n}(\mathcal{L})=1$ and ${\rm z}(\mathcal{L})=2$. To do so, we have the following result:
\begin{proposition}\label{specL2}
The operator $\mathcal{L}_{+}=\partial_x^4-\partial_x^2+\omega-\varphi^{\alpha}$  defined on $L^2$ with domain $H^4$
has no negative eigenvalues. The eigenvalue zero is
simple with associated eigenfunction $\varphi$. Moreover the rest of the
spectrum is bounded away from zero and the essential spectrum is the interval
$[\omega_0,\infty)$.
\end{proposition}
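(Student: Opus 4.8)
The plan is to exploit the obvious factorization/positivity structure of $\mathcal{L}_{+}$ together with the equation $(\ref{edo})$. First I would observe that $(\ref{edo})$ itself says $\mathcal{L}_{+}\varphi = \partial_x^4\varphi - \partial_x^2\varphi + \omega\varphi - \varphi^{\alpha}\varphi = 0$, so $\varphi$ is an eigenfunction of $\mathcal{L}_{+}$ for the eigenvalue $0$. Since $\varphi$ is (up to the sign choice coming from the explicit formula $(\ref{expsolode})$) a strictly positive function, a Perron–Frobenius / ground-state argument will show that $0$ is the \emph{lowest} eigenvalue of $\mathcal{L}_{+}$ and that it is simple: an eigenfunction associated with the bottom of the spectrum of a Schrödinger-type operator $\partial_x^4-\partial_x^2+\omega-V$ can be taken nonnegative, and being orthogonal to a strictly positive function it would have to vanish identically unless it is (a multiple of) $\varphi$ itself. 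This simultaneously gives $\mathrm{n}(\mathcal{L}_{+})=0$ and $\mathrm{z}(\mathcal{L}_{+})=1$. The statement about the essential spectrum being $[\omega_0,\infty)$ is immediate from Lemma~\ref{gelema} (applied with $m(\xi)=\xi^4+\xi^2$, $\psi=\varphi$, and the potential $\varphi^{\alpha}$ replacing $(\alpha+1)\varphi^{\alpha}$, which only changes the discrete spectrum), and the ``rest of the spectrum bounded away from zero'' part follows once simplicity of the eigenvalue $0$ is known.

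To make the ground-state argument rigorous for a fourth-order operator — where the classical maximum-principle proof of Perron–Frobenius does not apply verbatim — I would instead route everything through the total positivity machinery of Subsection~\ref{reviewsec}, exactly as in the proof of Proposition~\ref{specL1}. Concretely, $\mathcal{L}_{+}$ is \emph{not} literally of the form $(\ref{opT})$ because the coefficient of $\varphi^{\alpha}$ is $1$ rather than $\alpha+1$; however, I can still run the companion-operator analysis: define $\widetilde{\mathcal{P}}_{\kappa}g(\xi)=\frac{1}{w_{\kappa}(\xi)}\int_{\mathbb{R}}\widetilde{K}(\xi-y)g(y)\,dy$ with $\widetilde{K}=\widehat{\varphi^{\alpha}}$ (no factor $\alpha+1$) and $w_{\kappa}(\xi)=\xi^4+\xi^2+\kappa+\omega_0$. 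The analogue of Lemma~\ref{multilema} shows $-\kappa\in\sigma(\mathcal{L}_{+})$ iff $1\in\sigma(\widetilde{\mathcal{P}}_{\kappa})$, and since $\widetilde{K}$ is a positive constant multiple of the kernel $K$ used in Proposition~\ref{specL1}, the computations $(\ref{fouriesol2})$ already established $\widehat{\varphi^{\alpha}}>0$ and $\widehat{\varphi^{\alpha}}\in PF(2)$, so the Krein–Rutman result Lemma~\ref{simplelema} and the sign/oscillation arguments of Theorem~\ref{mainal} apply to $\widetilde{\mathcal{P}}_{\kappa}$ verbatim. The key arithmetic point is that here the eigenvalue equation $\mathcal{L}_{+}\varphi=0$ together with $\widehat{\varphi}>0$ (established in $(\ref{fouriersol})$) identifies $\widehat{\varphi}$ as the \emph{positive} eigenfunction realizing $\lambda_0(0)$-type behavior at $\kappa=0$, which forces $0$ to be the bottom eigenvalue and to be simple, and rules out any negative eigenvalue.

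The main obstacle I anticipate is precisely this mismatch of the coefficient $(\alpha+1)$ versus $1$: the cited Theorem~\ref{mainal} is stated for the operator $(\ref{opT})$ in which the solitary-wave equation $(\ref{solitary})$ and the potential coefficient are tied together, so one cannot simply quote it as a black box for $\mathcal{L}_{+}$. I would address this by noting that the \emph{only} place the coefficient enters the total-positivity argument is through the kernel $K$, which must be positive and in $PF(2)$; replacing $(\alpha+1)\widehat{\varphi^{\alpha}}$ by $\widehat{\varphi^{\alpha}}$ preserves both properties (a positive scalar multiple of a $PF(2)$ function is $PF(2)$), so Lemmas~\ref{multilema}–\ref{simplelema} and Theorem~\ref{mainal} go through with $\mathcal{S}$ replaced by $\mathcal{L}_{+}$ and the eigenfunction $\psi'$ in Lemma~\ref{gelema} replaced by $\psi=\varphi$ (which is the kernel element here, since $(\partial_x^4-\partial_x^2+\omega_0)\varphi=\varphi^{\alpha}\cdot\varphi$, i.e. $\mathcal{L}_{+}\varphi=0$). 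A secondary, purely bookkeeping point is that the essential spectrum and the domain $H^4$ claims require only that $\varphi^{\alpha}\in L^{\infty}$ with decay at infinity, which is guaranteed by $(\ref{expsolode})$ and Remark~\ref{converg}. Assembling these pieces yields $\mathrm{n}(\mathcal{L}_{+})=0$, $\mathrm{z}(\mathcal{L}_{+})=1$ with kernel spanned by $\varphi$, the spectral gap above $0$, and $\sigma_{\mathrm{ess}}(\mathcal{L}_{+})=[\omega_0,\infty)$.
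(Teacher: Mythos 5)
Your proposal is correct and follows essentially the same route as the paper: after noting $\mathcal{L}_{+}\varphi=0$ and the essential spectrum from Lemma~\ref{gelema}, the paper likewise adapts the companion operators $\mathcal{P}_{\kappa}$ to the potential $\varphi^{\alpha}$, uses $\widehat{\varphi}>0$ together with the Krein--Rutman Lemma~\ref{simplelema} to force $\lambda_0(0)=1$ (so zero is the simple bottom eigenvalue of $\mathcal{L}_{+}$), and excludes negative eigenvalues via the strict decrease of $\kappa\mapsto\lambda_0(\kappa)$. Your explicit handling of the coefficient mismatch ($1$ versus $\alpha+1$ in the kernel) makes precise what the paper only signals with the phrase ``we can still adapt it.''
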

\begin{proof}
We need to prove that $0$ is a simple eigenvalue of $\mathcal{L}_{+}$ and ${\rm n}(\mathcal{L}_{+})=0$. This result, in fact, is not explicitly covered by the arguments in Subsection 3.1, but we can still adapt it to the linear operator $\mathcal{L}_{+}$. Indeed, the essential spectrum is $[\omega_0,\infty)$ and this fact is due to Lemma $\ref{gelema}$. In view of $(\ref{edo})$, it is clear that zero is an eigenvalue with eigenfunction $\varphi$. Thus, Lemma \ref{multilema} implies that $1$ is an
eigenvalue of $\mathcal{P}_{0}$ with eigenfunction $\widehat{\varphi}$. We claim that: if $\lambda_0(0)$ is the first eigenvalue of $\mathcal{P}_{0}$, we have  $\lambda_0(0)=1$. In fact, let us assume by contradiction that $\lambda_0(0)$ is the first eigenvalue of $\mathcal{P}_{0}$ and $\lambda_0(0)\neq1$. Consider $\chi_{0,0}$
be the eigenfunction associated to $\lambda_0(0)$. Using Lemma \ref{simplelema}, we have
$\chi_{0,0}>0$. Since $\mathcal{L}_{+}\varphi=0$ and $\widehat{\varphi}>0$, by Proposition $\ref{specL1}$, the inner product in $L^2$ between $\chi_{0,0}$ and $\widehat{\varphi}$ is then strictly positive, which contradicts the fact that
the eigenfunctions are orthogonal. Gathering the above claim and Lemma \ref{simplelema}, we obtain that 1 is
a simple eigenvalue of $\mathcal{P}_{0}$. Therefore, it follows from
Lemma \ref{multilema} that zero is a simple eigenvalue of $\mathcal{L}_{+}$.\\
\indent It remains to show that $\mathcal{L}_{+}$ has no negative eigenvalues. To do
so, it is sufficient, from Lemma \ref{multilema}, to prove that $1$ is
not an eigenvalue of $\mathcal{P}_{\kappa}$ for any $\kappa>0$. We
already know that
$
\lim_{\kappa\to\infty}\lambda_0(\kappa)=0
$
and $\kappa\in[0,\infty)\mapsto\lambda_0(\kappa)$ is a strictly decreasing
function (see \cite[page 9]{al}). Thus, for $\kappa>0$ and $i\geq1$,
$
|\lambda_i(\kappa)|\leq \lambda_0(\kappa)<\lambda_0(0)=1.
$
 This obviously implies that 1 cannot be an eigenvalue of $\mathcal{P}_{\kappa}$,
 $\kappa>0$, and the proof of the proposition is completed.

\end{proof}

\begin{remark}
Since $\varphi$ in $(\ref{expsolode})$ is a minimizer of the problem $(\ref{minimizationproblem})$, we could employ Propostion $\ref{propunique}$ to show, in Proposition $\ref{specL2}$, that $\mathcal{L}_{+}$ has no negative eigenvalues.
\end{remark}
\subsection{The spectrum of $\mathcal{L}$}


We state the spectral properties of the linearized operator $\mathcal{L}$ at $\omega=\omega_0$ and for $\varphi=\phi$ given by $(\ref{expsolode}).$ To prove next proposition, we use the results contained in Propostions $\ref{specL1}$ and $\ref{specL2}$.

\begin{proposition}\label{propmain}
The operator $\mathcal{L}$ in \eqref{operator} defined on $\mathbb{L}^2$
with domain $\mathbb{H}^4$ has a unique negative eigenvalue, which
is simple. The eigenvalue zero is double with associated eigenfunctions
$(\varphi',0)$ and $(0,\varphi)$. Moreover the essential spectrum is the interval
$[\omega_0,\infty)$.

\end{proposition}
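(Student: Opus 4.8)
The plan is to exploit the block-diagonal structure of $\mathcal{L}$ displayed in $(\ref{operator})$, which reduces the spectral analysis of $\mathcal{L}$ on $\mathbb{L}^2$ entirely to the spectral pictures of $\mathcal{L}_{-}$ and $\mathcal{L}_{+}$ already established in Propositions $\ref{specL1}$ and $\ref{specL2}$. Since $\mathcal{L}={\rm diag}(\mathcal{L}_{-},\mathcal{L}_{+})$ acts on $\mathbb{L}^2=L^2\times L^2$ with domain $\mathbb{H}^4=H^4\times H^4$, a pair $(f,g)$ lies in the domain and satisfies $\mathcal{L}(f,g)=\lambda(f,g)$ if and only if $\mathcal{L}_{-}f=\lambda f$ and $\mathcal{L}_{+}g=\lambda g$; in particular $\mathcal{L}$ inherits self-adjointness from $\mathcal{L}_{-}$ and $\mathcal{L}_{+}$ (cf. Lemma $\ref{gelema}$), so all the spectral notions below are well defined. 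I would first record the elementary facts about direct sums of self-adjoint operators: $\sigma(\mathcal{L})=\sigma(\mathcal{L}_{-})\cup\sigma(\mathcal{L}_{+})$, $\sigma_{\rm ess}(\mathcal{L})=\sigma_{\rm ess}(\mathcal{L}_{-})\cup\sigma_{\rm ess}(\mathcal{L}_{+})$, and for any isolated $\lambda$ the multiplicity of $\lambda$ as an eigenvalue of $\mathcal{L}$ equals the sum of its multiplicities for $\mathcal{L}_{-}$ and $\mathcal{L}_{+}$ (a value being $0$ when $\lambda$ is not an eigenvalue of that summand), with a basis of the eigenspace given by $\{(f,0)\,:\,\mathcal{L}_{-}f=\lambda f\}\cup\{(0,g)\,:\,\mathcal{L}_{+}g=\lambda g\}$.

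Then I would assemble the conclusion. By Proposition $\ref{specL1}$, $\mathcal{L}_{-}$ has exactly one negative eigenvalue $\kappa<0$, which is simple; $0$ is a simple eigenvalue with eigenfunction $\varphi'$; the remainder of $\sigma(\mathcal{L}_{-})$ is bounded away from $0$; and $\sigma_{\rm ess}(\mathcal{L}_{-})=[\omega_0,\infty)$. By Proposition $\ref{specL2}$, $\mathcal{L}_{+}$ has no negative eigenvalues; $0$ is a simple eigenvalue with eigenfunction $\varphi$; the remainder of $\sigma(\mathcal{L}_{+})$ is bounded away from $0$; and $\sigma_{\rm ess}(\mathcal{L}_{+})=[\omega_0,\infty)$. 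Combining via the previous paragraph: the only negative number in $\sigma(\mathcal{L})$ is $\kappa$, since $\mathcal{L}_{+}$ has no negative spectrum at all (so $\kappa$ is not doubled), and its multiplicity for $\mathcal{L}$ is $1+0=1$; hence $\mathcal{L}$ has a unique, simple negative eigenvalue. Likewise $0$ is a simple eigenvalue of each of $\mathcal{L}_{-}$ and $\mathcal{L}_{+}$, so it has multiplicity $1+1=2$ for $\mathcal{L}$, with eigenspace spanned by $(\varphi',0)$ and $(0,\varphi)$. Finally $\sigma_{\rm ess}(\mathcal{L})=[\omega_0,\infty)\cup[\omega_0,\infty)=[\omega_0,\infty)$, and since each summand's spectrum stays away from $0$ apart from the listed eigenvalues, so does that of $\mathcal{L}$.

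There is essentially no obstacle here: the statement is a bookkeeping consequence of the two preceding propositions together with the diagonal form of $\mathcal{L}$. The only point deserving a line of care is making sure the negative eigenvalue is not accidentally doubled, which is exactly the content of ${\rm n}(\mathcal{L}_{+})=0$ from Proposition $\ref{specL2}$ (equivalently, $\mathcal{L}_{+}\ge 0$ below $\omega_0$), so that $\kappa\notin\sigma(\mathcal{L}_{+})$. I would close by remarking that this recovers the counts ${\rm n}(\mathcal{L})=1$ and ${\rm z}(\mathcal{L})=2$ used in the stability discussion of the introduction.
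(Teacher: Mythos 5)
Your proposal is correct and follows exactly the route the paper intends: the paper gives no written argument for this proposition beyond citing Propositions \ref{specL1} and \ref{specL2}, and your direct-sum bookkeeping ($\sigma(\mathcal{L})=\sigma(\mathcal{L}_{-})\cup\sigma(\mathcal{L}_{+})$ with multiplicities adding) is precisely the omitted content. The one point you single out for care --- that ${\rm n}(\mathcal{L}_{+})=0$ prevents the negative eigenvalue from being doubled --- is indeed the only nontrivial observation needed.
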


\begin{flushright}
$\blacksquare$
\end{flushright}
\section{Orbital stability and instability of solitary waves}

\subsection{Local and global well-posedness}
Let us recall some important results concerning the local and global well-posedness for the Cauchy problem associated to the equation $(\ref{NLS-equation})$ given by

\begin{equation}\label{CauchyNLS}
	\left\{\begin{array}{llll}i u_t + u_{xx}-u_{xxxx} + |u|^\alpha u = 0,\ \ \ \ \ \ \mbox{in}\ \mathbb{R}\times \mathbb{R},\\\\
 u(x,0)=u_0(x),\ \ \ \ \ \ \ \ \ \ \ \ \ \ \ \ \ \ \ \ \ \ \ \ \ \mbox{in}\ \mathbb{R}.
 \end{array}\right.\end{equation}

\begin{proposition}\label{localteo}
Given $u_0\in \mathbb{H}^2$, there exists $t_0 > 0$ and a unique solution
$u\in C([0,t_0];\mathbb{H}^2)$ of \eqref{CauchyNLS} such that $u(0) = u_0$.
The solution conserves the energy and the mass in the sense that
$
E(u(t)) = E(u_0)$ and $F(u(t)) = F(u_0),$ $t\in
[0, t_0],
$
where  $E$ is defined in \eqref{E} and $F$ is defined in
\eqref{F}. Let $t^*$ be the maximal time of existence of
$u$. Then either
\begin{itemize}
    \item[(i)] $t^*=\infty$, or
    \item[(ii)] $t^*<\infty$ and $\lim_{t\to t^*}\|u(t)\|_{\mathbb{H}^2}=\infty$.
\end{itemize}
Moreover, for any $s<t^*$ the data-solution mapping $u_0\mapsto u$ is
continuous from $\mathbb{H}^2$ to $C([0,s];\mathbb{H}^2)$.
\end{proposition}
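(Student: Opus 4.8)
The plan is to run the standard semilinear fixed-point scheme in the energy space $\mathbb{H}^2$. Rewriting $(\ref{CauchyNLS})$ as $u_t = -i(\partial_x^4-\partial_x^2)u + i|u|^\alpha u$, let $U(t) = e^{-it(\partial_x^4-\partial_x^2)}$ be the Fourier multiplier with symbol $e^{-it(\xi^4+\xi^2)}$; since this symbol has modulus one, $U(t)$ is a strongly continuous unitary group on $H^s(\mathbb{R})$ for every $s$, hence on $\mathbb{H}^2$. A function $u \in C([0,t_0];\mathbb{H}^2)$ solves the Cauchy problem if and only if it is a fixed point of
\begin{equation}\label{duhamel-plan}
\Psi(u)(t) := U(t)u_0 + i\int_0^t U(t-s)\big(|u(s)|^\alpha u(s)\big)\,ds.
\end{equation}

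First I would establish the nonlinear estimate. In one space dimension $H^2(\mathbb{R})$ is continuously embedded in $W^{1,\infty}(\mathbb{R})$ and is a Banach algebra, so, combining the pointwise bound $\big||z_1|^\alpha z_1 - |z_2|^\alpha z_2\big| \le C(|z_1|^\alpha+|z_2|^\alpha)|z_1-z_2|$ with the Leibniz rule and Gagliardo--Nirenberg inequalities, the map $N(u) = |u|^\alpha u$ satisfies, for all $u,v$ in the ball $\{w \in \mathbb{H}^2 : \|w\|_{\mathbb{H}^2}\le R\}$,
\begin{equation}\label{nlest-plan}
\|N(u)\|_{\mathbb{H}^2} \le C(R)\,\|u\|_{\mathbb{H}^2}, \qquad \|N(u)-N(v)\|_{\mathbb{H}^2}\le C(R)\,\|u-v\|_{\mathbb{H}^2}.
\end{equation}
Because $U$ is unitary, $\Psi$ then maps the closed ball of radius $2\|u_0\|_{\mathbb{H}^2}$ in $C([0,t_0];\mathbb{H}^2)$ into itself and is a contraction there as soon as $t_0 = t_0(\|u_0\|_{\mathbb{H}^2})$ is small; the Banach fixed-point theorem yields a unique local solution, and a Gronwall argument on the difference of two solutions gives uniqueness in the whole of $C([0,t_0];\mathbb{H}^2)$. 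Since $t_0$ depends on the data only through $\|u_0\|_{\mathbb{H}^2}$, the solution extends to a maximal interval $[0,t^*)$ with the alternative that either $t^*=\infty$ or $\limsup_{t\to t^*}\|u(t)\|_{\mathbb{H}^2}=\infty$; reapplying the local construction from times close to $t^*$ upgrades the $\limsup$ to a genuine limit. Continuous dependence of $u_0\mapsto u$ from $\mathbb{H}^2$ into $C([0,s];\mathbb{H}^2)$, $s<t^*$, follows from the same contraction estimates together with Gronwall's inequality.

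It remains to justify the conservation laws. Formally, pairing the equation with $\overline{u}$ in $\mathbb{L}^2$ and taking imaginary parts gives $\frac{d}{dt}F(u)=0$, while pairing with $\overline{u_t}$ and taking real parts gives $\frac{d}{dt}E(u)=0$; the difficulty is that these pairings are not \emph{a priori} meaningful, since $u_t$ only lies in $\mathbb{H}^{-2}$. I would therefore first regularize the data, $u_0^\varepsilon \to u_0$ in $\mathbb{H}^2$ with $u_0^\varepsilon \in \mathbb{H}^s$ for $s$ large, run the same fixed-point argument at regularity $\mathbb{H}^s$ to obtain smooth solutions $u^\varepsilon$ on a common time interval, carry out the two computations rigorously for $u^\varepsilon$, and then pass to the limit $\varepsilon\to0$ using the continuous dependence just established together with the continuity of $E$ and $F$ on $\mathbb{H}^2$ (guaranteed again by $H^2\hookrightarrow L^\infty$ and the algebra property). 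The main obstacle is the nonlinear estimate $(\ref{nlest-plan})$ for values of $\alpha$ that are not even integers, where $N$ is not polynomial: controlling $\partial_x^2(|u|^\alpha u)$ in $L^2$ forces one to treat factors of $|u|^{\alpha-1}$ type and to use the one-dimensional embeddings carefully, and this, rather than the abstract contraction and continuation scheme, is where the real work lies.
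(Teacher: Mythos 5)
The paper itself does not prove this proposition; it simply cites Proposition 4.1 of Pausader \cite{pa}, so you are supplying an argument where the authors supply a reference. Your overall architecture (Duhamel formula with the unitary group $e^{-it(\partial_x^4-\partial_x^2)}$, contraction on a ball, blow-up alternative, continuous dependence, conservation laws by regularization) is the standard one and sound in outline. The genuine gap is the nonlinear estimate you postulate, $\|N(u)\|_{\mathbb{H}^2}\le C(R)\|u\|_{\mathbb{H}^2}$ together with its Lipschitz companion: for non-integer $\alpha$, and in particular for $0<\alpha\le \tfrac12$, this estimate is \emph{false}, not merely hard. Indeed $\partial_x^2\bigl(|u|^\alpha u\bigr)$ contains terms of the schematic form $|u|^{\alpha-1}|u_x|^2$; near a simple zero $x_0$ of $u$ (where $u_x(x_0)\neq0$) this behaves like $|x-x_0|^{\alpha-1}$, which fails to be square integrable once $2(\alpha-1)\le-1$. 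So $N$ does not even map $\mathbb{H}^2$ into $\mathbb{H}^2$ for small $\alpha$, and a contraction in the $C([0,t_0];\mathbb{H}^2)$ norm cannot close. Since the proposition is asserted for all $\alpha>0$, this is not a technical loose end to be deferred to ``where the real work lies''; it forces a different scheme.

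The standard repair (Kato's method, as in Cazenave's $H^2$ theory, and essentially what the cited result of Pausader implements with Strichartz refinements) is to contract on the set $\{u:\ \|u\|_{L^\infty_t\mathbb{H}^2}\le M\}$ equipped with the weaker metric $\|u-v\|_{L^\infty_t\mathbb{L}^2}$, which is complete by weak-$*$ compactness. This requires only the $\mathbb{L}^2$-Lipschitz bound $\|N(u)-N(v)\|_{\mathbb{L}^2}\le C(R)\|u-v\|_{\mathbb{L}^2}$ on $\mathbb{H}^2$-balls, which does hold for every $\alpha>0$ by your pointwise inequality and $H^2(\mathbb{R})\hookrightarrow L^\infty(\mathbb{R})$. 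The $\mathbb{H}^2$ bound on the fixed point is then recovered from the equation itself: a time-difference-quotient argument gives $u_t\in L^\infty_t\mathbb{L}^{2}$, whence $(\partial_x^4-\partial_x^2)u=-iu_t+|u|^\alpha u\in L^\infty_t\mathbb{L}^2$ and in fact $u\in L^\infty_t\mathbb{H}^4$. This also repairs your conservation-law step: regularizing the data to $\mathbb{H}^s$ with $s$ large and rerunning the contraction at that regularity hits the same obstruction (higher derivatives of $|u|^\alpha u$ are even more singular for non-integer $\alpha$), whereas the $\mathbb{H}^4$ regularity and $u_t\in L^\infty_t\mathbb{L}^2$ obtained from the equation already suffice to justify the pairings with $\overline{u}$ and $\overline{u_t}$ directly.
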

\begin{proof}
See Proposition 4.1 in \cite{pa}.
\end{proof}

Regarding global solutions in the energy space $\mathbb{H}^2$, we have the following result.

\begin{corollary}\label{globalteo}
Let $u_0\in \mathbb{H}^2$ be fixed. If $0<\alpha<8$, then the solution $u$ obtained in Theorem Proposition  \ref{localteo} can
be extended to the whole real line.
\end{corollary}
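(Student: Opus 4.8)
The plan is to rule out the finite-time blow-up alternative (ii) in Proposition \ref{localteo}, and for that it suffices to establish an a priori bound on $\|u(t)\|_{\mathbb{H}^2}$ that is uniform on the maximal interval of existence $(-t_*,t^*)$. The two ingredients are the conservation laws $E(u(t))=E(u_0)$ and $F(u(t))=F(u_0)$ supplied by Proposition \ref{localteo}, together with the one-dimensional Gagliardo--Nirenberg inequality, exactly as in the discussion of global solutions for the classical NLS recalled in the Introduction.

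First I would rewrite the definition of $E$ in \eqref{E} as the identity
\[
\frac12\|u_{xx}(t)\|_{L^2}^2+\frac12\|u_x(t)\|_{L^2}^2 = E(u_0)+\frac{1}{\alpha+2}\|u(t)\|_{L^{\alpha+2}}^{\alpha+2},
\]
and then estimate the nonlinear term by interpolation. Since $H^2(\mathbb{R})\hookrightarrow L^\infty(\mathbb{R})$, the Gagliardo--Nirenberg inequality yields a constant $C=C(\alpha)>0$ such that $\|f\|_{L^{\alpha+2}}\le C\,\|\partial_x^2 f\|_{L^2}^{\theta}\,\|f\|_{L^2}^{1-\theta}$ with $\theta=\frac{\alpha}{4(\alpha+2)}$; raising this to the power $\alpha+2$ and using the conserved mass $\|u(t)\|_{L^2}^2=2F(u_0)$ gives
\[
\|u(t)\|_{L^{\alpha+2}}^{\alpha+2}\le C\,\|u_{xx}(t)\|_{L^2}^{\alpha/4}\,\|u(t)\|_{L^2}^{\frac{3\alpha}{4}+2}=C\,(2F(u_0))^{\frac{3\alpha}{8}+1}\,\|u_{xx}(t)\|_{L^2}^{\alpha/4}.
\]
The crucial observation is that the exponent $\alpha/4$ of $\|u_{xx}(t)\|_{L^2}$ is strictly less than $2$ if and only if $\alpha<8$; this is the only place where the hypothesis of the corollary enters.

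Next, applying Young's inequality with conjugate exponents $\frac{8}{\alpha}$ and $\frac{8}{8-\alpha}$ (both $>1$ precisely because $0<\alpha<8$), I would absorb the $\|u_{xx}(t)\|_{L^2}^{\alpha/4}$ term into the left-hand side: there is a constant $C_1=C_1(\alpha,F(u_0))$ with
\[
\frac{C}{\alpha+2}(2F(u_0))^{\frac{3\alpha}{8}+1}\|u_{xx}(t)\|_{L^2}^{\alpha/4}\le \frac14\|u_{xx}(t)\|_{L^2}^2+C_1.
\]
Combining this with the energy identity gives $\frac14\|u_{xx}(t)\|_{L^2}^2+\frac12\|u_x(t)\|_{L^2}^2\le E(u_0)+C_1$ for every $t$ in the maximal interval, and together with $\|u(t)\|_{L^2}^2=2F(u_0)$ this produces a bound $\|u(t)\|_{\mathbb{H}^2}\le C_2(\alpha,E(u_0),F(u_0))$ independent of $t$. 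By the dichotomy in Proposition \ref{localteo}, option (ii) is then impossible, so $t^*=\infty$; the argument on the negative time axis is identical, and hence the solution extends to all of $\mathbb{R}$.

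I do not expect a genuine obstacle here: the argument is the standard energy-conservation-plus-Gagliardo--Nirenberg scheme. The only point deserving a little care is the bookkeeping of the interpolation exponents, which shows that the scaling-critical power for the $\partial_x^4$ dispersion is $\alpha=8$ rather than $\alpha=4$ as for \eqref{2NLS-equation} (even though, as emphasized in the Introduction, this threshold for global existence in $\mathbb{H}^2$ does not coincide with the stability threshold $\alpha_0\approx 4.8$). One should also simply note that the inequality is applied to the complex-valued solution, which is harmless since $\|u\|_{L^\infty}\le C\|u\|_{L^2}^{3/4}\|u_{xx}\|_{L^2}^{1/4}$ holds verbatim for complex functions.
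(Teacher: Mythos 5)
Your argument is correct and follows essentially the same route as the paper: conservation of $E$ and $F$ combined with the one-dimensional Gagliardo--Nirenberg inequality with $\theta=\frac{\alpha}{4(\alpha+2)}$, yielding the subquadratic exponent $\alpha/4<2$ on $\|u_{xx}(t)\|_{L^2}$ precisely when $\alpha<8$, and hence a uniform $\mathbb{H}^2$ bound that rules out alternative (ii) of Proposition \ref{localteo}. The only difference is that you spell out the absorption step via Young's inequality, which the paper leaves implicit.
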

\begin{proof}
We need to use the conservation law $E(u(t))=E(u_0)$ for all $t\in [0,t^{\ast})$, and the Gagliardo-Nirenberg inequality in \cite[Chapter 3]{linares-ponce}. In fact, the term $||u(t)||_{\mathbb{L}^{\alpha+2}}^{\alpha+2}$ present in $(\ref{E})$ can be estimated by
\begin{equation}\label{GN1}
||u(t)||_{\mathbb{L}^{\alpha+2}}\leq 
C||u_{xx}(t)||_{\mathbb{L}^2}^{\theta}||u(t)||_{\mathbb{L}^2}^{1-\theta},
\end{equation}
where $\theta=\frac{\alpha}{4(\alpha+2)}$ and $C>0$ is a constant. By using $E(u(t))=E(u_0)$ and $F(u(t))=F(u_0)$, we obtain from inequality in $(\ref{GN1})$ that
\begin{equation}\label{GN2}
||u_{xx}(t)||_{\mathbb{L}^2}^2\leq\displaystyle E(u(t))+\int_{\mathbb{R}}|u(x,t)|^{\alpha+2}dx\leq E(u_0)+C||u_{xx}(t)||_{\mathbb{L}^2}^{\frac{\alpha}{4}}||u_0||_{\mathbb{L}^2}^{\frac{3\alpha+8}{4}}.
\end{equation}
\indent On the other hand, since $\left(||u_{xx}||_{\mathbb{L}^2}^2+||u||_{\mathbb{L}^2}^2\right)^{\frac{1}{2}}$ is an equivalent 
norm in $\mathbb{H}^2$, we obtain by $(\ref{GN2})$ that the local solution $u$ obtained in Proposition $\ref{localteo}$ is global in the energy space $\mathbb{H}^2$ when $\alpha\in (0,8)$.
\end{proof}

\subsection{Orbital stability and instability}

We first make clear our notion of orbital stability.  Taking into account that \eqref{NLS-equation} is invariant by rotations $T_1(\theta)$ and translations $T_2(r)$, we define the orbit generated by $\Phi=(\varphi,0)$ as
\begin{equation}\label{l1}
\begin{split}
\Omega_\Phi&=\{T_1(\theta) T_2(r)\Phi;\;\;\theta,r\in\R\}\\
&= \left\{ \left(\begin{array}{cc}
 \cos\theta & \sin\theta\\
-\sin\theta & \cos\theta
\end{array}\right)\left(\begin{array}{c}
 \varphi(\cdot-r)\\
 0
\end{array}\right);\;\;\theta,r\in\R  \right\}.
\end{split}
\end{equation}
In $\Hh^2$, we introduce the pseudo-metric $d$ by
$
d(f,g):=\inf\{\|f-T_1(\theta)
T_2(r)\Phi\|_{\Hh^2};\;\theta,r\in\R\}.$

\indent We have to notice that the distance between $f$ and $g$ is the distance between $f$ and the orbit generated by  $g$ under the actions of rotation and translation, that is,
$d(f,\Phi)=d(f,\Omega_{\Phi}).$
 We have the following definition of orbital stability:

\begin{definition}\label{stadef}
Let $\Theta(x,t)=e^{i\omega t}\varphi(x)=(\varphi(x)\cos(\omega t), \varphi(x)\sin(\omega t))$ be a standing wave for \eqref{NLS-equation}. We say that $\Theta$ is orbitally stable in $\Hh^2$ provided that, given $\ve>0$, there exists $\delta>0$ with the following property: if $u_0\in \Hh^2$ satisfies $\|u-\Phi\|_{\Hh^2}<\delta$, then the solution, $u(t)$, of \eqref{NLS-equation} with initial condition $u_0$ exist for all $t\geq0$ and satisfies
$
d(u(t),\Omega_\Phi)<\ve,\ \mbox{for all}\ t\geq0.
$
Otherwise, we say that $\Theta$ is orbitally unstable in $\Hh^2$.
\end{definition}

\indent We have the following stability result.

\begin{proposition}\label{teostab1} Let $\varphi$ be an even minimizer of the problem $\ref{min-problem}$. If ${\rm z}(\mathcal{L})=2$, $d''(\omega)>0$ and the Cauchy problem in $(\ref{CauchyNLS})$ is globally well-posed in the energy space $\mathbb{H}^2$, then the minimizer $\varphi$ is orbitally stable in the sense of Definition $\ref{stadef}$.
\end{proposition}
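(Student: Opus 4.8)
The plan is to run the Lyapunov-functional argument of Grillakis--Shatah--Strauss \cite{gss1,gss2} (see also Weinstein \cite{we}), feeding it the spectral data already in hand. I would set $G_\omega(u)=E(u)+\omega F(u)$, a conserved quantity for \eqref{NLS-equation}, and recall from \eqref{edo} that $\Phi=(\varphi,0)$ is a critical point, $G_\omega'(\Phi)=0$, with $G_\omega''(\Phi)=\mathcal{L}$ as in \eqref{operator}. Since $\varphi$ is an even minimizer of \eqref{min-problem}, Proposition \ref{propunique} gives ${\rm n}(\mathcal{L})=1$ (with ${\rm n}(\mathcal{L}_{-})=1$, ${\rm n}(\mathcal{L}_{+})=0$), and the hypothesis ${\rm z}(\mathcal{L})=2$ together with $\mathcal{L}_{-}\varphi'=0$ and $\mathcal{L}_{+}\varphi=0$ forces ${\rm z}(\mathcal{L}_{-})={\rm z}(\mathcal{L}_{+})=1$, so $\Ker(\mathcal{L})={\rm span}\{(\varphi',0),(0,\varphi)\}$ (the translational and rotational directions), with the remainder of the spectrum bounded away from $0$.

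The heart of the argument will be a coercivity estimate. Because $\varphi$ is even and $\varphi'$ is odd, $(\varphi,\varphi')_{L^2}=0$, so there is a unique even $\chi\in H^4$ with $\mathcal{L}_{-}\chi=\varphi$; setting $\Xi=(\chi,0)$ one has $\mathcal{L}\Xi=\Phi=F'(\Phi)$. Analytically, the hypothesis $d''(\omega)>0$ is the statement $(\Xi,\Phi)_{L^2}=(\chi,\varphi)_{L^2}<0$ (when a local $C^1$ branch $\omega\mapsto\varphi_\omega$ is available this is exactly $-\tfrac12\tfrac{d}{d\omega}\|\varphi\|_{L^2}^2=-d''(\omega)$, with $\chi=-\partial_\omega\varphi$). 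Since $\mathcal{L}$ has a single negative direction and it is neutralized by the constraint $(v,\Phi)_{L^2}=0$ precisely when $(\mathcal{L}^{-1}\Phi,\Phi)_{L^2}<0$, the standard spectral lemma (see \cite{gss1} and \cite{we}) gives a constant $c>0$ with
$$
(\mathcal{L}v,v)_{L^2}\ \ge\ c\,\|v\|_{\mathbb{H}^2}^2 \qquad \text{for every } v\in\mathbb{H}^2 \text{ such that } v\perp \Ker(\mathcal{L}) \text{ and } (v,\Phi)_{L^2}=0;
$$
the passage from an $L^2$- to an $\mathbb{H}^2$-bound would use the elliptic structure $\mathcal{L}=(\partial_x^4-\partial_x^2+\omega)\,I-(\text{bounded, compact})$ away from the finite-dimensional exceptional subspace. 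To dispense with exact orthogonality to $\Phi$ I would split $v=\beta\,\widetilde{\Xi}+v_\perp$ with $(\widetilde{\Xi},\Phi)_{L^2}=1$ and $v_\perp\perp\Phi$, and absorb $\beta$ through conservation of mass: $F(\Phi+v)=F(\Phi)+(\Phi,v)_{L^2}+\tfrac12\|v\|_{L^2}^2$ gives $|\beta|\le C\delta+C\|v\|_{\mathbb{H}^2}^2$.

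Finally, I would fix $\varepsilon>0$, take $\|u_0-\Phi\|_{\mathbb{H}^2}<\delta$, and invoke global well-posedness (the hypothesis; e.g. Corollary \ref{globalteo} for $\alpha<8$) to obtain $u\in C([0,\infty);\mathbb{H}^2)$. By the implicit function theorem and ${\rm z}(\mathcal{L})=2$, as long as $u(t)$ stays in a small tube around $\Omega_\Phi$ one may select continuous $\theta(t),r(t)$ so that $v(t):=T_1(-\theta(t))T_2(-r(t))u(t)-\Phi$ satisfies $v(t)\perp\Ker(\mathcal{L})$ and $d(u(t),\Omega_\Phi)\le\|v(t)\|_{\mathbb{H}^2}$. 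Conservation of $G_\omega$ and the invariance under $T_1,T_2$ give $G_\omega(u_0)-G_\omega(\Phi)=\tfrac12(\mathcal{L}v(t),v(t))_{L^2}+o(\|v(t)\|_{\mathbb{H}^2}^2)$; combining this with $|G_\omega(u_0)-G_\omega(\Phi)|\le C\delta^2$, $|F(u_0)-F(\Phi)|\le C\delta$, the splitting above, and the coercivity estimate yields $\|v(t)\|_{\mathbb{H}^2}^2\le C_1\delta^2+C_2\|v(t)\|_{\mathbb{H}^2}^4$. A continuity/bootstrap argument — $t\mapsto\|v(t)\|_{\mathbb{H}^2}$ is continuous and small at $t=0$, hence can never reach the threshold that would violate this inequality — then gives $\|v(t)\|_{\mathbb{H}^2}\le C\delta$ for all $t\ge0$; choosing $\delta$ with $C\delta<\varepsilon$ delivers $d(u(t),\Omega_\Phi)<\varepsilon$ for all $t\ge0$, i.e. orbital stability in the sense of Definition \ref{stadef}.

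The step I expect to be the main obstacle is the coercivity lemma: showing that intersecting $\Ker(\mathcal{L})^\perp$ with the hyperplane $\{(v,\Phi)_{L^2}=0\}$ genuinely annihilates the unique negative direction of $\mathcal{L}$ — which is exactly where $d''(\omega)>0$, equivalently $(\chi,\varphi)_{L^2}<0$, enters — and then upgrading this positivity to a uniform $\mathbb{H}^2$-coercivity constant. The remaining ingredients (the modulation/decomposition, the $C^2$ Taylor expansion of $G_\omega$ on $\mathbb{H}^2$, and the bootstrap) are routine, provided the non-orthogonal part of $v$ is controlled via mass conservation as indicated.
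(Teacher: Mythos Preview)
Your proposal is correct and is essentially the same approach the paper takes: the paper's proof simply records that ${\rm n}(\mathcal{L})=1$ (from the minimization, via Proposition~\ref{propunique}) and ${\rm z}(\mathcal{L})=2$ with $\Ker(\mathcal{L})=[(\varphi',0),(0,\varphi)]$, and then cites \cite{we} and \cite{FabioAdemir} for the Lyapunov/coercivity argument, which is exactly the machinery you have unpacked in detail. Your identification of the key step --- that $d''(\omega)>0$, equivalently $(\chi,\varphi)_{L^2}<0$, is precisely what neutralizes the single negative direction on the codimension-one constraint $\{v\perp\Phi\}$ --- matches the role this hypothesis plays in those references.
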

\begin{proof}
Since $\varphi$ is an even minimizer of the problem $(\ref{min-problem})$, we obtain by the proof of Propositon $\ref{specL1}$ that ${\rm n}(\mathcal{L})=1$ and ${\rm z}(\mathcal{L})=2$ with $\Ker(\mathcal{L})=[(\varphi',0),(0,\varphi)]$. Since $d''(\omega)>0$ and the Cauchy problem in $(\ref{CauchyNLS})$ is globally well-posed in the energy space $\mathbb{H}^2$, we obtain by the arguments in \cite{we} (see also \cite{FabioAdemir}) that $\varphi$ is orbitally stable in the sense of Definition $\ref{stadef}$.
\end{proof}

\begin{remark}\label{operaeven} Let $\mathbb{H}_e^2$ be the space constituted by complex functions $v\in\mathbb{H}^2$ such that $v$ is even. In $\mathbb{H}_e^2$, we can consider the Cauchy problem in $(\ref{CauchyNLS})$ to study the local and global well-posedness of even solutions in the sense that the results of Proposition $\ref{localteo}$ and Corollary $\ref{globalteo}$ can be easily adapted for this particular case. The only feature that we lost in our analysis is that over $\mathbb{H}_e^2$, the equation in $(\ref{CauchyNLS})$ is not invariant under the symmetry of translations, and this means that it is necessary to remove  the action $T_2$ in the Definition of stability $\ref{stadef}$. Since the associated operator $\mathcal{L}_e=\left(\begin{array}{cccc}\mathcal{L}_{-,e}&  0\\
0& \mathcal{L}_{+,e}\end{array}\right): D(\mathcal{L}_e)=\mathbb{H}_e^2\subset \mathbb{L}_e^2\rightarrow \mathbb{L}_e^2$ has the same form as $\mathcal{L}$ in $(\ref{operator})$ and is invariant over the space $\mathbb{L}_e^2$ constituted by functions in $\mathbb{L}^2$ that are even, we obtain immediately when $\varphi$ is an even minimizer of the problem $(\ref{min-problem})$ that ${\rm n}(\mathcal{L}_e)=1$ (this would happen when $\alpha$ is an even integer according to Remark $\ref{remarkeven}$). On the other hand, if $\varphi$ is even, we obtain that $\varphi'$ is odd and this implies $(\varphi',0)$ is not an element of $\Ker(\mathcal{L}_e)$. \end{remark}

\begin{remark}
\label{stabmin} Let $\varphi$ be an even minimizer of the problem $\ref{min-problem}$ and suppose ${\rm z}(\mathcal{L}_{e})=1$. If  $d''(\omega)>0$, then the minimizer $\varphi$ is orbitally stable in $\mathbb{H}_e^2$ in the sense of Definition $\ref{stadef}$ only modulus rotations. This fact follows by \cite[Theorem 3.5]{gss1}. Now, if  $d''(\omega)<0$, then the minimizer $\varphi$ is orbitally unstable in $\mathbb{H}_e^2$ in the sense of Definition $\ref{stadef}$ only modulus rotations. In particular, $\phi$ is orbitally unstable in $\mathbb{H}^2$ in the sense of Definition $\ref{stadef}$. This fact is a consequence of \cite[Theorem 4.7]{gss1}.

\end{remark}

\subsection{Numerical study of $d''(\omega)$} Now, we will study the sign of 
$\displaystyle d''(\omega)=\frac{1}{2}\frac{d}{d\omega}\int_ {\mathbb{R}} \varphi^2(x) dx,$
numerically. To give an analytical support for this study in a neighbourhood of $\omega_0$, we first need to prove the following result:
\begin{proposition}\label{propsmooth}
 Let $\varphi$ be the solution of $(\ref{edo})$ given by $(\ref{expsolode})$. For each $\alpha>0$, there exists a neighborhood $I_{\alpha}$ centered at $\omega_0$, an open ball $B$ in $H^4$ centered at $\varphi$, and a unique smooth curve $\omega\in I_{\alpha}\mapsto \varphi_{\omega}\in B\subset H^4$ of even positive solutions for the equation $(\ref{edo})$. 
\end{proposition}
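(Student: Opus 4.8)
The plan is to prove Proposition \ref{propsmooth} via the implicit function theorem applied to a suitably chosen nonlinear map between Banach spaces. First I would set up the functional framework: define
\[
\mathcal{G}:\mathbb{R}_{+}\times H^4_{e}\longrightarrow L^2_{e},\qquad \mathcal{G}(\omega,v)=v''''-v''+\omega v-|v|^{\alpha}v,
\]
where $H^4_e$ and $L^2_e$ denote the subspaces of even real-valued functions (working in the even sector removes the translation degeneracy $\varphi'$ from the kernel, which is exactly what makes the linearization invertible). By construction $\mathcal{G}(\omega_0,\varphi)=0$ thanks to \eqref{edo} and \eqref{expsolode}. One checks that $\mathcal{G}$ is of class $C^1$ (indeed smooth when $\alpha$ is an even integer, and $C^1$ for general $\alpha>0$ since $v\mapsto|v|^{\alpha}v$ is $C^1$ from $H^4\hookrightarrow L^\infty$ into $L^2$ with Fréchet derivative $w\mapsto(\alpha+1)|v|^{\alpha}w$ near the strictly positive function $\varphi$, where $|v|^\alpha$ is smooth).

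Next I would identify the partial derivative in the second variable: $D_v\mathcal{G}(\omega_0,\varphi)=\partial_x^4-\partial_x^2+\omega_0-(\alpha+1)\varphi^{\alpha}=\mathcal{L}_{-}$, viewed as an operator from $H^4_e$ to $L^2_e$. The crucial step is to show this operator is an isomorphism between $H^4_e$ and $L^2_e$. By Proposition \ref{specL1}, on the full space $L^2$ the operator $\mathcal{L}_{-}$ has a one-dimensional kernel spanned by $\varphi'$, which is an odd function; hence on the even sector $L^2_e$ the kernel is trivial, i.e. $\mathcal{L}_{-}$ is injective on $H^4_e$. Self-adjointness of $\mathcal{L}_{-}$ together with the fact that $0$ is an isolated eigenvalue (the essential spectrum is $[\omega_0,\infty)$ and the rest of the spectrum is bounded away from zero, again by Proposition \ref{specL1}) gives that $\mathcal{L}_{-}$ restricted to $L^2_e$ has closed range equal to the orthogonal complement of its (even) kernel, which is all of $L^2_e$; surjectivity onto $L^2_e$ follows, and boundedness of the inverse from the spectral gap. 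Since $(\partial_x^4-\partial_x^2+\omega_0)^{-1}$ maps $L^2$ to $H^4$ and the potential term $(\alpha+1)\varphi^\alpha$ is a bounded (indeed relatively compact) perturbation, the graph norm of $\mathcal{L}_{-}$ is equivalent to the $H^4$ norm, so $\mathcal{L}_{-}:H^4_e\to L^2_e$ is a Banach-space isomorphism.

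With the isomorphism in hand, the implicit function theorem yields a neighborhood $I_\alpha\ni\omega_0$, an open ball $B\subset H^4$ centered at $\varphi$, and a unique $C^1$ curve $\omega\in I_\alpha\mapsto\varphi_\omega\in B\cap H^4_e$ with $\mathcal{G}(\omega,\varphi_\omega)=0$ and $\varphi_{\omega_0}=\varphi$; each $\varphi_\omega$ solves \eqref{edo}. It remains to argue that, shrinking $I_\alpha$ if necessary, the $\varphi_\omega$ are positive: this is immediate from continuity of the curve in $H^4\hookrightarrow C^3_b$, since $\varphi=\varphi_{\omega_0}$ is strictly positive and decays at infinity, so a small $H^4$-perturbation — which is a small $L^\infty$-perturbation — stays positive on compact sets, while the exponential decay structure coming from the equation keeps the tails positive (alternatively one invokes that $|v|^\alpha v$ is smooth on the set of functions staying within an $L^\infty$-ball of $\varphi$, so the curve actually solves the smooth equation $v''''-v''+\omega v-v^{\alpha+1}=0$ and standard ODE arguments apply). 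The main obstacle is the invertibility claim for $\mathcal{L}_{-}$ on the even sector: one must carefully combine the spectral information from Proposition \ref{specL1} (simplicity of the zero eigenvalue, oddness of the kernel element $\varphi'$, and the spectral gap) with self-adjointness to upgrade injectivity on $L^2_e$ to a bounded two-sided inverse $H^4_e\to L^2_e$; the regularity of the nonlinearity for non-integer $\alpha$ is a secondary but genuine point, handled by the strict positivity of $\varphi$.
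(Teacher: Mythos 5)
Your proposal follows essentially the same route as the paper: an implicit function theorem argument on the even sector, with the invertibility of $D_v\mathcal{G}(\omega_0,\varphi)=\mathcal{L}_{-}$ from $H^4_e$ onto $L^2_e$ deduced from Proposition \ref{specL1} (the kernel element $\varphi'$ is odd, hence absent from the even sector, and the spectral gap gives a bounded inverse). You are in fact somewhat more careful than the paper on two points it leaves implicit, namely the $C^1$ regularity of $v\mapsto|v|^{\alpha}v$ for non-integer $\alpha$ near the strictly positive function $\varphi$, and the persistence of positivity along the curve.
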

\begin{proof}
Let us define the map $\Upsilon:(0,+\infty)\times H_{e}^4\rightarrow L_{e}^2$ given by $\Upsilon(\omega,\psi)=\psi''''-\psi''+\omega\psi-\psi^{\alpha+1},
$
where $L_{e}^4$ and $H_{e}^4$ are the spaces $L^2$ and $H^4$, respectively, constituted by even functions. We see from equation $(\ref{edo})$ that $\Upsilon(\omega_0,\varphi)=0$. Additionally, the function $\Upsilon$ is smooth in all variables, and the Fr\'echet derivative at the point $(\omega_0,\varphi)$ is 
$D_{\psi}\Upsilon(\omega_0,\varphi)=\partial_x^4-\partial_x^2+\omega_0-(\alpha+1)\varphi^{\alpha}=\mathcal{L}_{-,e}.$
Since $\varphi$ is even, $\varphi'$ is odd and it is not an element in $H_{e}^4$. By Proposition $\ref{specL1}$, we obtain that $0$ is an isolated simple eigenvalue associated with the eigenfunction 
$\varphi'$, the remainder of the spectrum is bounded away from zero, and the essential spectrum is the interval $[\omega_0,+\infty)$. Thus, we conclude that $0$ is not an element of the spectrum of the operator $\mathcal{L}_{-,e}$ over the space $L_{e}^2$, which implies that $0$ belongs to the resolvent set of $\mathcal{L}_{-,e}$. From the implicit function theorem, we obtain a neighborhood $I_{\alpha}$ centered at $\omega_0$, an open ball $B$ in $H^4$ centered at $\varphi$, and a unique smooth curve $\omega\in I_{\alpha}\mapsto \varphi_{\omega}\in B\subset H^4$ of solutions for the equation $(\ref{edo})$. This fact proves the proposition.
\end{proof}

\begin{remark}
By Proposition $\ref{propsmooth}$, if $\omega$ is sufficiently close to $\omega_0$, we still obtain that ${\rm z}(\mathcal{L}_{-}) = 1$. For a minimizer $\varphi_{\omega}$ which depends on $\omega>0$ of the problem $(\ref{minimizationproblem})$, where $\omega$ is far from $\omega_0$, we can still guarantee that ${\rm n}(\mathcal{L}_{-}) = 1$ and ${\rm n}(\mathcal{L}_{+}) = 0$ (see Proposition $\ref{propunique}$). However, we cannot conclude that ${\rm z}(\mathcal{L}_{-}) = 1$ in order to establish the existence of a smooth curve $\omega\in(0,+\infty)\mapsto\varphi_{\omega}$ depending on $\omega$, as obtained in Proposition $\ref{propsmooth}$. The existence of a smooth curve as above is crucial for determining the sign of $d''(\omega)$. This property is also a cornerstone in the stability analysis, as discussed in Proposition $\ref{teostab1}$ and Remark $\ref{stabmin}$.
\end{remark}
\indent 
To explore the sign of  $d''(\omega)$, the integral is approximated by the trapezoidal rule and the derivative is calculated by a forward difference approximation. Figure \ref{d-second-der} shows the values of 
$\displaystyle d''(\omega)$ for $\alpha=2,~3,~4, ~5$.
The dashed vertical lines in the rest of the figures correspond to the $\omega_0$ value where the exact solution exists. The graphs show that $d''(\omega)>0$ when $\alpha \leq 4$; however, the sign changes from negative to positive at a threshold $\omega$ value for $\alpha=5.$  
Therefore, we next focus on the interval $\alpha \in (4,5)$.  Figure \ref{d-second-der2} illustrates the variation of $d''(\omega)$ with $\omega$ for $\alpha=4.1$.
The experiment shows that the sign-changing behavior begins with $\alpha\approx 4.1$. As it can be seen from Figure \ref{alpha_d} the sign of $d''(\omega)$ changes at $\omega=\omega_0$  for
 $\alpha \approx 4.8$.  Finally, we consider the values where $\alpha >5$. We present the  variation of $d''(\omega)$ with $\omega$ for $\alpha=5.4$ and $\alpha=5.5$ in Figure \ref{d-second-der3}. The result shows that $d''(\omega)$ becomes negative for all $\omega$ values when $\alpha \geq 5.5$.  Our results indicate a prediction for the orbital stability since for $\alpha \in (0, 4.1)$, we have $d''(\omega)>0$. If $\alpha \in [4.1, 5.5)$, we obtain a threshold value $\omega_c$ such that if $\omega\in (0,\omega_c)$, we have $d''(\omega)<0$, and if $\omega>\omega_c$, we obtain $d''(\omega)>0$. At $\omega=\omega_c$, we deduce that $d''(\omega_c)=0$, that is, we obtain a frequency $\omega$ that indicates a critical value for the stability. When $\alpha \geq 5.5 $, we obtain that $d''(\omega)<0$. 
\begin{figure}[h!]
 \begin{minipage}[t]{0.45\linewidth}
   \includegraphics[width=3.1in]{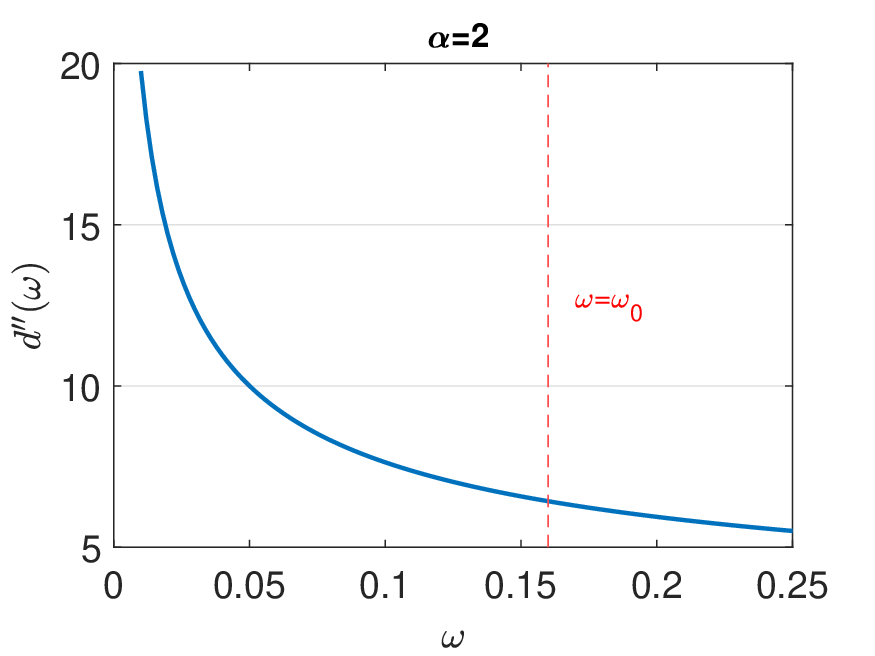}
 \end{minipage}
\hspace{30pt}
\begin{minipage}[t]{0.45\linewidth}
   \includegraphics[width=3.1in]{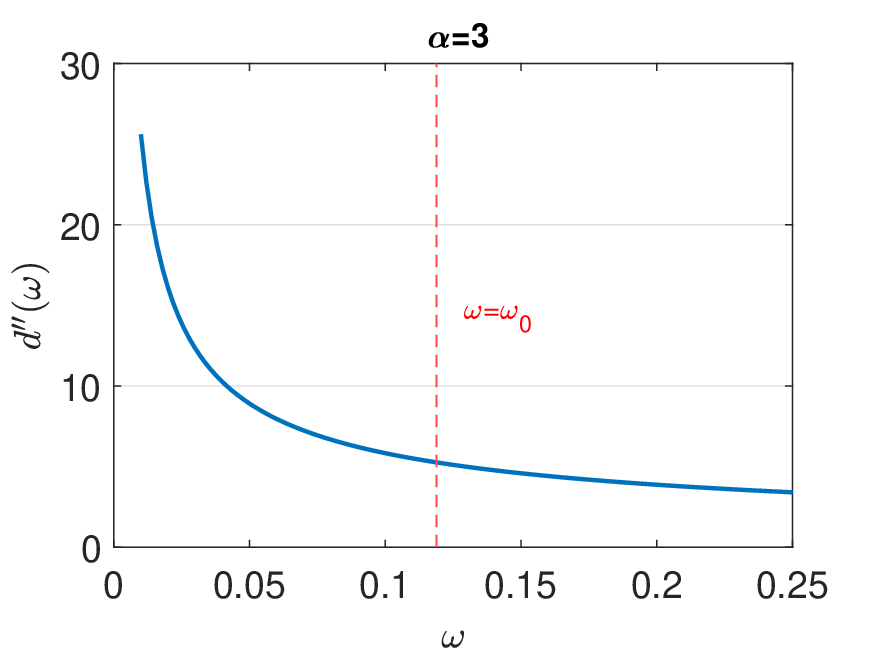}
 \end{minipage}
 \hspace{30pt}
\begin{minipage}[t]{0.45\linewidth}
   \includegraphics[width=3.1in]{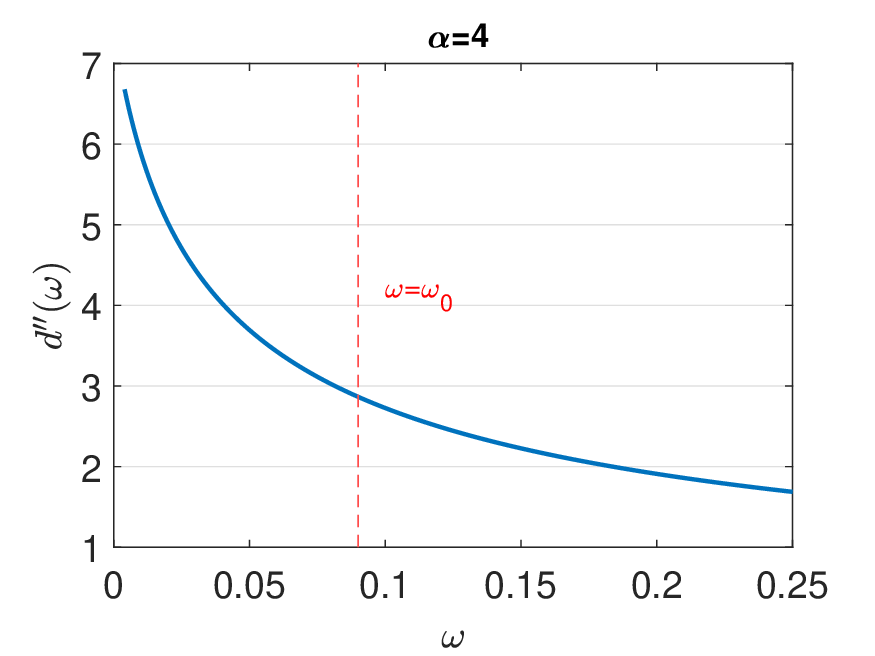}
 \end{minipage}
 \hspace{30pt}
\begin{minipage}[t]{0.45\linewidth}
   \includegraphics[width=3.1in]{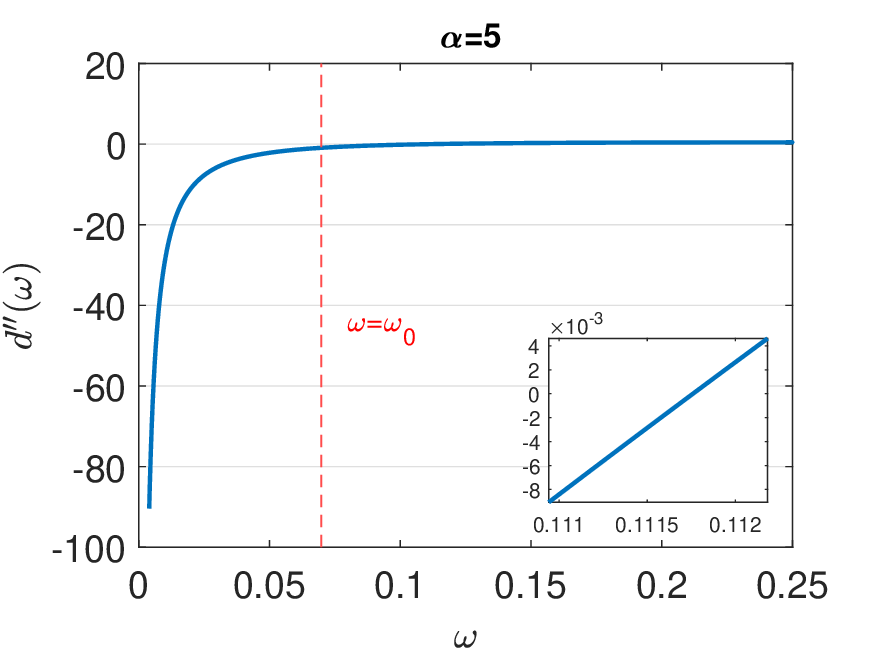}
 \end{minipage}
 \caption{ The variation of $d''(\omega)$ with $\omega$ for $\alpha=2,~3,~4, ~5$.}
 \label{d-second-der}
\end{figure}

\begin{figure}[h!]
  \includegraphics[width=3.1in]{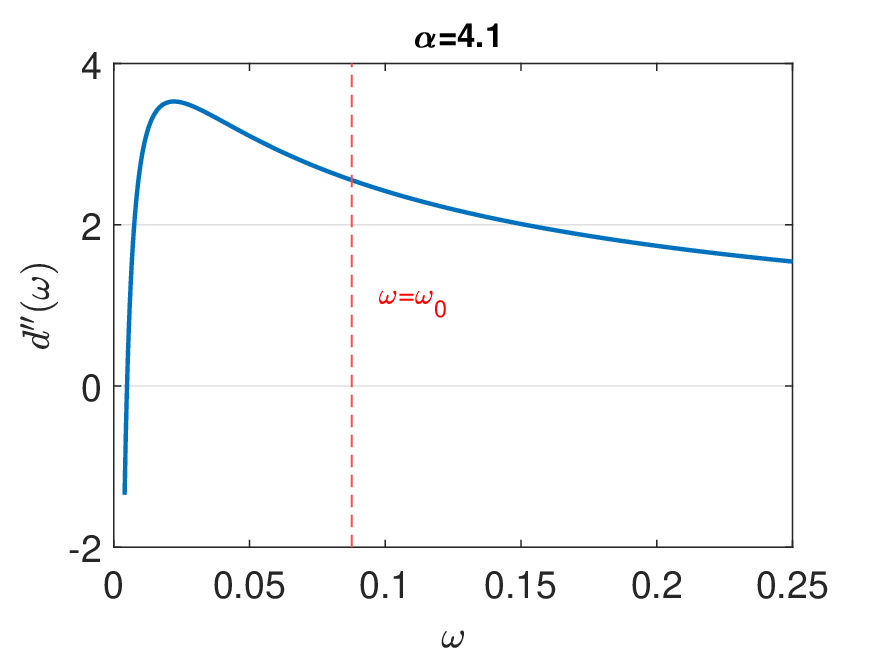}
  \caption{The variation of $d''(\omega)$ with $\omega$ for $\alpha=4.1$.}
 \label{d-second-der2}
\end{figure}

\begin{figure}[h!]
 \begin{minipage}[t]{0.45\linewidth}
  \includegraphics[width=3.1in]{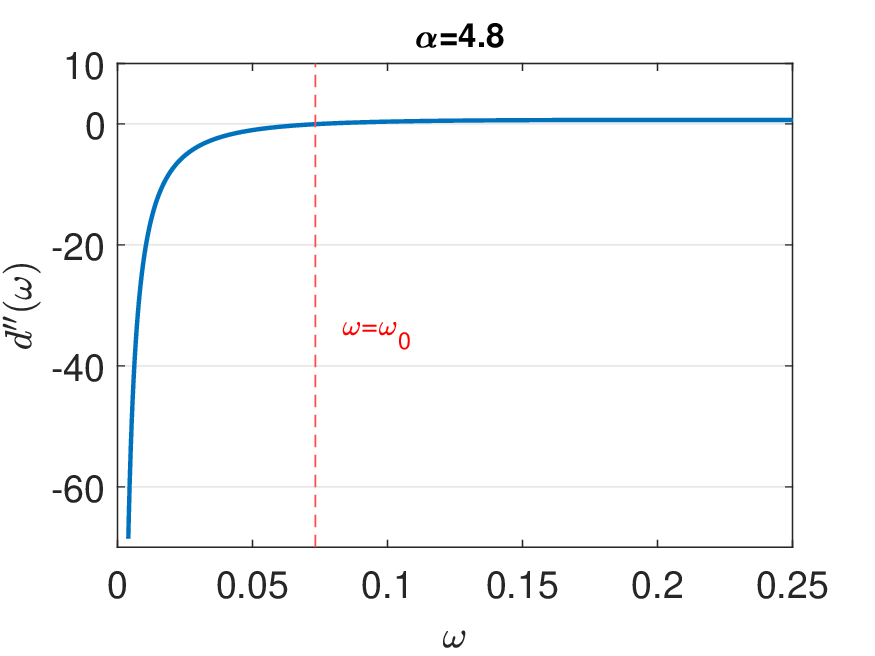}
 \end{minipage}
\hspace{30pt}
\begin{minipage}[t]{0.45\linewidth}
   \includegraphics[width=3.1in]{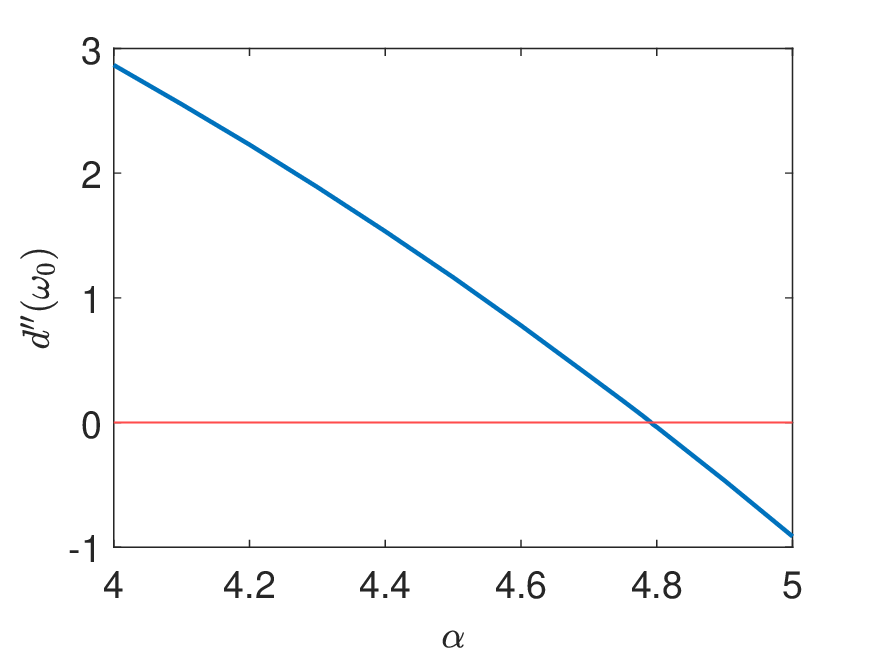}
 \end{minipage}
  \caption{The variation of $d''(\omega)$ with $\omega$ for $\alpha=4.8$ and the variation of  $d''(\omega_0)$ with $\alpha$.}
 \label{alpha_d}
\end{figure}

\begin{figure}[h!]
 \begin{minipage}[t]{0.45\linewidth}
  \includegraphics[width=3.1in]{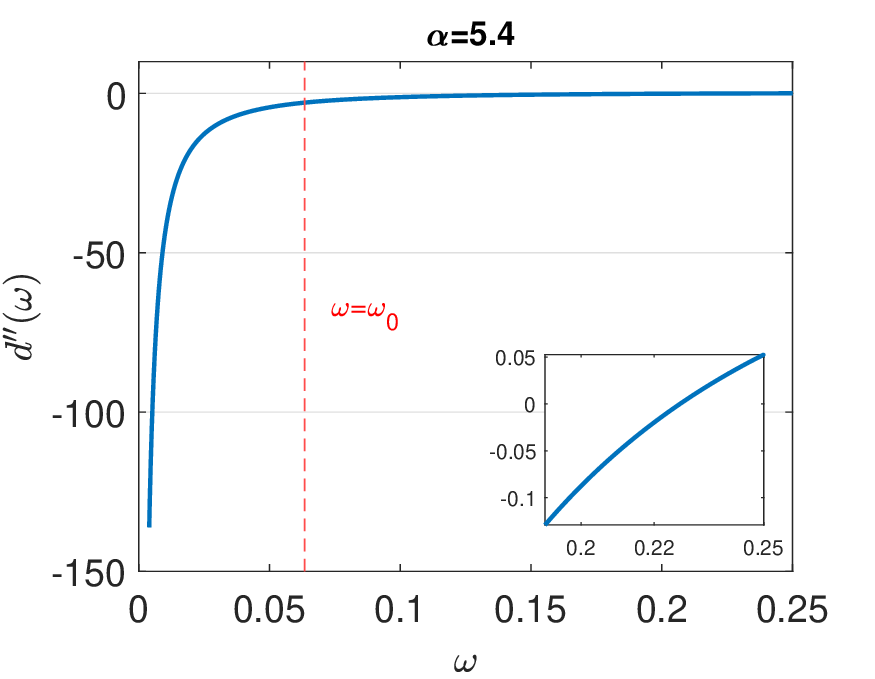}
 \end{minipage}
\hspace{30pt}
\begin{minipage}[t]{0.45\linewidth}
   \includegraphics[width=3.1in]{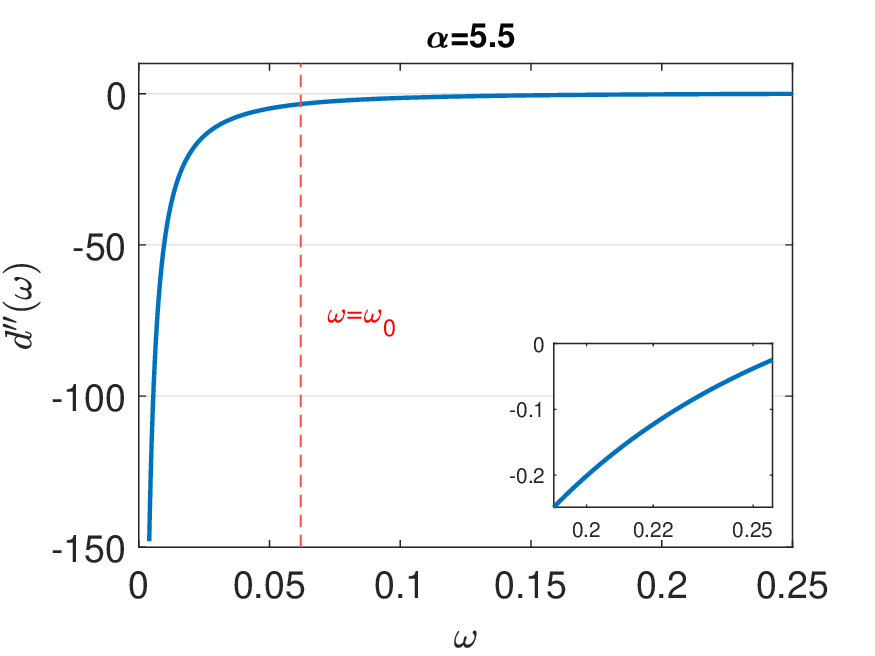}
 \end{minipage}
  \caption{The variation of $d''(\omega)$ with $\omega$ for $\alpha=5.4$ and $\alpha=5.5$.}
 \label{d-second-der3}
\end{figure}

\begin{figure}[h!]
   \includegraphics[width=3.5in]{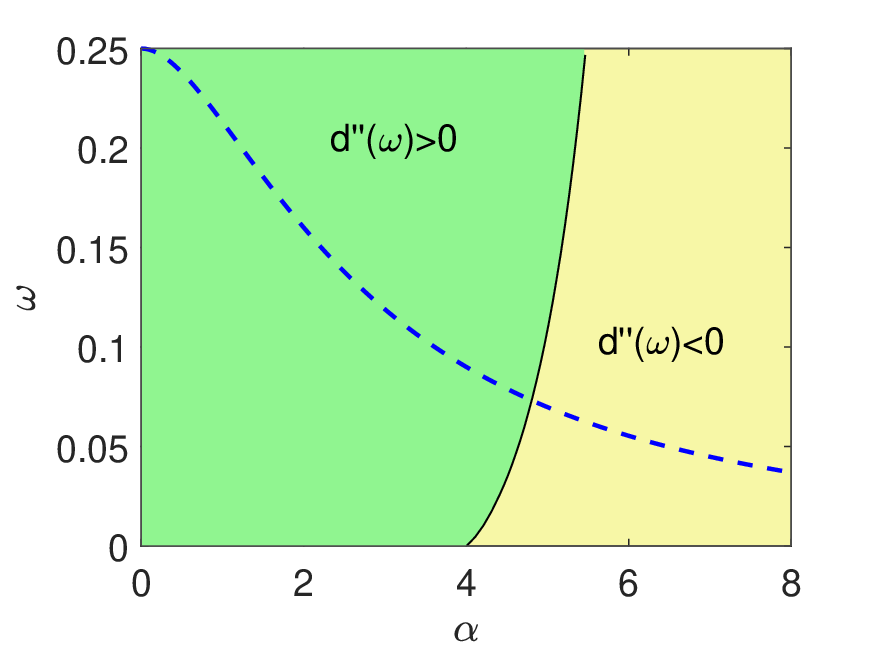}
  \caption{The regions where $d''(\omega)>0$ (green) and $d''(\omega)<0$ (yellow) in the $(\alpha, \omega)$ plane. The dashed blue curve represents the graph of $d''(\omega)$ associated with the solitary wave in 
  $(\ref{expsolode})$. The intersection with the curve predicting stability (green) and instability (yellow) occurs at $(\omega_0, \alpha_0) \approx (0.07, 4.8)$.}

 \label{regions}
\end{figure}

In order to summarize the above results, Figure \ref{regions} shows the sign of $d''(\omega)$ in the $(\alpha, \omega)$ plane. In this figure, the dashed blue curve corresponds to $(\alpha, \omega_0)$ values in \eqref{omega} where there exist explicit solutions. This curve intersects the boundary of the regions where $d''(\omega)>0$ and $d''(\omega)<0$ at $\alpha=4.8$.


\textit{Proof of Claims $\rm{(C1)}$-$\rm{(C2)}$ on page 4.} By Proposition $\ref{propmain}$, we obtain that ${\rm n}(\mathcal{L})=1$ and ${\rm z}(\mathcal{L})=2$. In addition, since $\phi'=\varphi'$ is odd, we obtain ${\rm n}(\mathcal{L}_e)=1$ and ${\rm z}(\mathcal{L}_e)=1$. According to Figures \ref{d-second-der}-\ref{regions},  we see that $d''(\omega_0)>0$ for all $\alpha\in(0,\alpha_0)$, while $d''(\omega_0)<0$ for all $\alpha>\alpha_0$. Important to mention that it is possible to see in Figure $\ref{alpha_d}$ and in Figure \ref{regions} there is a threshold value $\alpha_0$ where $d''(\omega_0)=0$. For $\alpha<\alpha_0$, we can use Proposition $\ref{teostab1}$ to conclude that $\varphi$ in $(\ref{expsolode})$ is orbitally stable in $\mathbb{H}^2$. On the other hand, if $\alpha>\alpha_0$ we obtain that $\varphi$ is orbitally unstable in $\mathbb{H}^2$ according to the Remark $\ref{stabmin}$.
\begin{flushright}
$\blacksquare$
\end{flushright}
\newpage

 \begin{remark} Some important considerations need to be highlighted. The first one is that the presence of the mixed term $\partial_x^4-\partial_x^2$ in equation $(\ref{NLS-equation})$ causes the surprising fact that even though the parameter $\alpha$ belongs to an interval where the existence of global solutions for the Cauchy problem $(\ref{CauchyNLS})$, namely $\alpha \in (\alpha_0,8)$, we have that the minimizer $\varphi$ in $(\ref{expsolode})$ is orbitally unstable in $\mathbb{H}^2$. This fact is a novelty compared with the  NLS equation $(\ref{2NLS-equation})$ since parameter $\alpha$ for the existence of a global solution in $\mathbb{H}^1$ and the orbital stability are the same, that is, if $\alpha\in (0,4)$ we have global solutions and stability and if $\alpha\in (4,+\infty)$ we obtain blow-up phenomena and instability. As far as we know, we have a similar scenario if we remove the term $u_{xx}$ in $(\ref{NLS-equation})$ to deal with the equation
 \begin{equation}\label{4NLS}
iu_t-u_{xxxx}+|u|^{\alpha}u=0.
 \end{equation}
 The scaling invariance $u\mapsto \lambda^{\frac{4}{\alpha}}u(\lambda x,\lambda^4t)$ associated with the equation $(\ref{4NLS})$ predicts that we would have global solutions and orbital stability for $\alpha\in (0,8)$. Blow-up phenomena may occur when $\alpha> 8$. On the other hand, the prediction for the stability/instability can be easily determined since we have the equality
 \begin{equation}\label{d24NLS}
d''(\omega)=\frac{1}{2\omega}\left(\frac{8-\alpha}{2\alpha}\right)\int_{\mathbb{R}}\varphi^2(x)dx.
 \end{equation}

\end{remark}

\vskip 1cm

\section*{Acknowledgments}
F. Natali is partially supported by CNPq/Brazil (grant 303907/2021-5).


\begin{thebibliography}{99}

\bibitem{abrasteg} M. Abramowitz and I.A. Stegun, Handbook of Mathematical Functions: With Formulas, Graphs, and Mathematical Tables, Dover, New York, 1970.

\bibitem{al} J.P. Albert, \textit{Positivity properties and stability of solitary-wave solutions
of model equations for long waves}, Commun. Partial Differ. Equ., 17 (1992), 1--22.

\bibitem{albo} J.P. Albert and J.L. Bona, \textit{Total positivity and the stability of
internal waves in stratified fluids of finite depth}, IMA J. Appl. Math.,
46 (1991), 1--19.

\bibitem{abh} J.P. Albert, J.L. Bona, and D. Henry, \textit{Sufficient conditions for instability of solitary-wave solutions of model equation for long waves}, Physica D, 24 (1987), 343--366.

\bibitem{cazenave} H. Berestycki and T. Cazenave, \textit{Instabilit\'e des \'etats stationnaires dans les \'equations de Schr\"odinger et de Klein–Gordon non lin\'eaires}, C. R. Acad. Sci. Paris Sér. I Math., 293 (1981), 489–492. 

\bibitem{denis2} D. Bonheure, J.P. Castreras, E.M. dos Santos, and R. Nascimento, \textit{Orbitally stable standing waves of a mixed dispersion nonlinear Schr\"odinger equation}, SIAM J. Math. Anal., 50 (2018), 5027-5071.

\bibitem{denis} D. Bonheure and R. Nascimento, \textit{Waveguide Solutions for a Nonlinear Schr\"odinger Equation with Mixed Dispersion}. In Contributions to Nonlinear Elliptic Equations and Systems, 31–53. Progress in Nonlinear Differential Equations and Their Application 86. Cham: Birkh\"auser/Springer, 2015.


\bibitem{BMN}
H. Borluk, G.M. Muslu and F. Natali, \textit{Stability of periodic waves for the defocusing fractional cubic nonlinear Schrödinger equation}. Commun. Nonlinear Sci. Numer. Simul., 133 (2024): 107953.

\bibitem{bl} H.J. Brascamp and E.H. Lieb, \textit{On extensions of the Brunn-Minkowski and Pr\'ekopa-Leindler theorems, including inequalities for log concave functions, and with an application to the
diffusion equation}, J. Funct. Anal. 22 (1976), 366-389.

\bibitem{Cristofani} F. Cristófani,  F. Natali and A. Pastor, \textit{Periodic traveling-wave solutions for regularized dispersive equations: sufficient conditions for orbital stability with applications}, Commun. Math. Sci., 18 (2020), 613-634.

\bibitem{PC} C. Marina and D. Pelinovsky, \textit{Two-pulse solutions in the fifth-order KdV equation: Rigorous theory and numerical approximations}, DCDS-B, 8(4), (2007), 773-800.

\bibitem{aduran}  A. Duran, \textit{An efficient method to compute solitary wave solutions of fractional Korteweg–de Vries equations}, Int. J. Comput. Math. 95 (2018), 1362–1374.

\bibitem{gss1} M. Grillakis, J. Shatah, and W. Strauss,
\textit{Stability theory of solitary waves in the presence of
symmetry I}, J. Funct. Anal., 74 (1987), 160--197.

\bibitem{gss2} M. Grillakis, J. Shatah, and W. Strauss,
\textit{Stability theory of solitary waves in the presence of
symmetry II}, J. Funct. Anal., {74} (1990), 308--348.

\bibitem{LP} U. Le and D. Pelinovsky,  \textit{Convergence of Petviashvili's method near periodic waves in the fractional Korteweg-de Vries equation}, SIAM J. Math. Anal., 51 (2019), 2850-2883.

\bibitem{levandosky} S. Levandosky, \textit{A stability analysis of fifth-order water wave models}, Physica D, 125 (1999), 222-240.

\bibitem{ka} S. Karlin, Total Positivity,
Stanford Univ. Press, Stanford, 1968.

\bibitem{kar} V.I. Karpman, \textit{Stabilization of soliton instabilities by higher-order dispersion: fourth order nonlinear Schr\"odinger-type equations}, Phys. Rev. E, 53 (1996), 1336--1339.

\bibitem{kar1} V.I. Karpman and A.G. Shagalov, \textit{Stability of soliton described by nonlinear Schrödinger type equations with higher-order dispersion},  Physica D, 144 (2000), 194--210.

\bibitem{lezsok} E. Lenzmann and J. Sok, \textit{A sharp rearrangement principle in Fourier space and symmetry results for PDEs with arbitrary order}, Int. Math. Res. Notices, (2021), 15040–15081.

\bibitem{linares-ponce} F. Linares and G. Ponce, Introduction to nonlinear
dispersive equations, Springer, NY, 2008.


\bibitem{lions1} P.L. Lions, \textit{The concentration compactness principle in the calculus of variations. The locally compact case, part I}, Annal. Inst. Henri Poincar\'e. Analyse non Linéaire, 1 (1984), 109-145.

\bibitem{lions2} P.L. Lions, \textit{The concentration compactness principle in the calculus of variations. The locally compact case, part II}, Annal. Inst. Henri Poincar\'e. Analyse non Linéaire, 1 (1984), 223-283.

\bibitem{moraes} G.E.B. Moraes, H. Borluk, G. de Loreno, G.M. Muslu and F. Natali,  \textit{Orbital stability of periodic standing waves for the cubic fractional nonlinear Schrödinger equation.} J. Differ. Equ., (2022) 341, pp.263-291.

\bibitem{FabioAdemir}  F. Natali and A. Pastor,	{\it The fourth-order dispersive nonlinear Schr\"{o}dinger  equation: Orbital stability of a standing wave}, SIAM Journal of Appl. Dyn. System, 14 (2015), 1326-1346.

\bibitem{fritz} F. Oberhettinger, Tables of Fourier transforms and Fourier transforms of distributions, Springer-Verlag, Berlin-Heidelberg, 1990.

\bibitem{OBM} G. Oruc, H. Borluk and G.M. Muslu, \textit{The generalized fractional Benjamin-Bona-Mahony equation: Analytical and numerical results},   Phys. D, 409 (2020), 132499.

\bibitem{OBMN} G. Oruc, F. Natali, H. Borluk and G.M. Muslu, \textit{On the stability of solitary wave solutions for a generalized fractional Benjamin–Bona–Mahony equation}, Nonlinearity, 35 (2022), 1152-1169.

\bibitem{PS} D. Pelinovsky and Y. Stepanyants, \textit{Convergence of Petviashvili's iteration method for numerical approximation of stationary solution  of nonlinear wave equations}, SIAM J. Numer. Anal., 42 (2004), 1110-1127


\bibitem{pa} B. Pausader, \textit{Global well-posedness for energy critical fourth-order
Schr\"odinger equations in the radial case}, Dynamics of PDE, 4 (2007),
197--225.

\bibitem{sprenger} B. Sprenger, T.J. Bridges and M. Shearer, \textit{Traveling Wave Solutions of the Kawahara Equation Joining
Distinct Periodic Waves}, J. Nonlinear Sci., 33(5), (2023), 79.

\bibitem{ReedSimon} M. Reed and B. Simon, Methods of modern mathematical physics IV. Analysis of operators, Academic Press, New York-London, 1978.


\bibitem{we} M.I. Weinstein, Lyapunov stability of ground states of nonlinear dispersive evolutions equations, {\em Comm. Pure Appl. Math.} 39 (1986), 51--68.

\bibitem{weinstein} M.I. Weinstein, \textit{Nonlinear Schrödinger equations and sharp interpolation estimates}, Comm. Math. Phys., 87 (1983), 567–576.


\end{thebibliography}
\end{document}